\newtheorem{thm}{Theorem}[section]
\newtheorem*{thm*}{Theorem}
\newtheorem{lem}[thm]{Lemma}
\newtheorem{prop}[thm]{Proposition}
\newtheorem{cor}[thm]{Corollary}
\newtheorem{notation}[thm]{Notation}
\newtheorem{prob}[thm]{Problem}
\theoremstyle{definition}
\newtheorem{defin}[thm]{Definition}
\newtheorem{remark}[thm]{Remark}
\newcommand{\N}{\mathbb N}
\newcommand{\R}{\mathbb R}
\newcommand{\E}{\mathbb E}
\renewcommand{\P}{\mathbb P}
\newcommand{\cD}{\mathcal{D}}
\newcommand{\cE}{\mathcal{E}}
\newcommand{\vp}{\varepsilon}
\newcommand{\supp}{{\rm{supp}}}
\newcommand{\dist}{{\rm{dist}}}
\newcommand{\cof}{{\rm{cof}}}
\newcommand{\ie}{{\it i.e.}}
\newcommand{\keq}{\!=\!}
\newcommand{\kge}{\!\ge\!}
\newcommand{\kle}{\!<\!}
\newcommand{\kin}{\!\in\!}
\newcommand{\eb}{\bar e}
\newcommand{\spa}{\text{\rm span}}
\def\trin{|\!|\!|}
\newcommandx{\todoin}[2][1=]{%
  \todo[inline, caption={todo}, #1]{%
    \begin{minipage}{\textwidth-20pt}#2\end{minipage}}}
\newcounter{proof}
\newenvironment{myproof}%
{\stepcounter{proof}\begin{proof}}%
  {\end{proof}}%
\newcounter{proofstep}[proof]
{\refstepcounter{proofstep}\smallskip\par\noindent%
  \ifthenelse{\isempty{#1}}%if
  {\textsc{Step \theproofstep.}}%then
  {\textsc{#1.}}%else
  \noindent}%
{\par}%
\newcounter{proofcase}[proof]
\newenvironment{proofcase}[1][]%
{\refstepcounter{proofcase}\smallskip\par\noindent%
  \ifthenelse{\isempty{#1}}%if
  {\textsc{Case \theproofcase.}}%then
  {\textsc{#1.}}%else
  \noindent}%
{\par}%
\DeclareMathOperator{\lesslex}{<_\ell}
\DeclareMathOperator{\drless}{\lhd\,}
\DeclareMathOperator{\drlesseq}{\unlhd\,}
\newcommand{\drindex}{\mathcal{O}_{\drless}}
\DeclareMathOperator{\cond}{\mathbb{E}}
\begin{document}

\title[Strategical reproducibility and factorization]{Strategically reproducible bases and the
  factorization property}

\author{R. Lechner}

\address{R. Lechner, Institute of Analysis, Johannes Kepler University Linz, Altenberger Strasse 69,
  A-4040 Linz, Austria}

\email{Richard.Lechner@jku.at}

\author{P. Motakis}

\address{P. Motakis, Department of Mathematics, University of Illinois at Urbana-Champaign, Urbana,
  IL 61801, U.S.A.}

\email{pmotakis@illinois.edu}

\author{P.F.X. M\"uller}

\address{P.F.X. M\"uller, Institute of Analysis, Johannes Kepler University Linz, Altenberger
  Strasse 69, A-4040 Linz, Austria}

\email{Paul.Mueller@jku.at}

\author{Th.~Schlumprecht}

\address{Th.~Schlumprecht, Department of Mathematics, Texas A\&M University, College Station, TX
  77843-3368, USA, and Faculty of Electrical Engineering, Czech Technical University in Prague,
  Zikova 4, 16627, Prague, Czech Republic}

\email{schlump@math.tamu.edu}

\thanks{The first and the third author was supported by the Austrian Science Foundation (FWF) under
  Grant Number Pr.Nr. P28352.  The second named author was supported by the National Science
  Foundation under Grant Number DMS-1600600.  The fourth named author was supported by the National
  Science Foundation under Grant Numbers DMS-1464713 and DMS-1711076 .}

\keywords{}

\subjclass[2010]{46B25, 47A68, 30H10}

\begin{abstract}
  We introduce the concept of strategically reproducible bases in Banach spaces and show that
  operators which have large diagonal with respect to strategically reproducible bases are factors
  of the identity.  We give several examples of classical Banach spaces in which the Haar system is
  strategically reproducible: multi-parameter Lebesgue spaces, mixed-norm Hardy spaces and most
  significantly the space $L^1$.  Moreover, we show the strategical reproducibility is inherited by
  unconditional sums.
\end{abstract}

\maketitle

% \tableofcontents

% redefining a command that amsart redefined
\makeatletter

\providecommand\@dotsep{5}

\def\listtodoname{List of Todos}

\def\listoftodos{\@starttoc{tdo}\listtodoname}

\makeatother

% make list of todos \listoftodos

\section{Introduction}\label{S:0}

In this paper, we address the following question: Given a Banach space $X$ with a basis
$(e_i)_{i=1}^\infty$, let $T : X\to X$ be an operator, whose matrix representation has a diagonal
whose elements are uniformly bounded away from $0$. We say in that case $T$ has a \emph{large
  diagonal}. Is it possible to factor the identity operator on $X$ through $T$?

The origin of this problem can be traced back to the work of
Pe{\l}czy{\'n}ski~\cite{pelczynski:1960}, who proved that every infinite dimensional subspace of
$\ell^p$, $1 \leq p < \infty$ and $c_0$ contains a further subspace which is complemented and
isomorphic to the whole space.

Closely related is the concept of primarity of a Banach space.  Recall that $X$ is called
\emph{primary}, if for every bounded projection $P : X\to X$, either $P(X)$ or $(I - P)(X)$ is
isomorphic to $X$.  The connection between the primarity of a Banach space and the factorization
problem is as follows: either $P$ has large diagonal or $I-P$ has large diagonal on a ``large''
subsequence of the basis $(e_i)_{i=1}^\infty$ of the Banach space $X$.  For example Maurey
\cite{maurey:sous:1975} proved primarity for $X=L^p$, $1\leq p < \infty$, by showing that for every
operator $T : L^p\to L^p$, the identity operator factors either through $T$ or $I-T$; see also
Alspach-Enflo-Odell~\cite{alspach:enflo:odell:1977}.  Factorization and primarity theorems were
obtained by Capon \cite{capon:1982:2} for the mixed norm spaces $L^p(L^q)$, $1 < p,q < \infty$, and
by the third named author \cite{mueller:1988} for $H^1$ and $\mathrm{BMO}$.

Separately, Andrew~\cite{Andrew1979} showed that for $1 < p < \infty$, every operator
$T : L^p\to L^p$ which has large diagonal with respect to the Haar system is a factor of the
identity operator on $L^p$.  More recently in~\cite{laustsen:lechner:mueller:2015} it was proved
that for $1 \leq p,q < \infty$, every operator $T : H^p(H^q)\to H^p(H^q)$ which has large diagonal
is a factor of the identity operator on $H^p(H^q)$.

In this paper we introduce a new approach to the factorization problem by devising an infinite two
person game and isolate a property of a basis called \emph{strategical reproducibility}, which
implies the factorization through the identity of operators with large diagonal.  We say in that
case, the basis $(e_i)_{i=1}^\infty$ has the \emph{factorization property}.  By using this method,
we obtain simplified proofs of existing results, and obtain the following new factorization theorems
for $L^1$ and related spaces.

\begin{thm*}
  The normalized Haar system of $L^1[0,1]$ has the factorization property.  Moreover, the normalized
  bi-parameter Haar system of $L^1([0,1]^2)$ and the tensor product of the $\ell^p$ unit vector
  basis with the Haar system have the factorization property.
\end{thm*}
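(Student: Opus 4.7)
The plan is to verify, for each of the three bases, that it is strategically reproducible, and then to appeal to the general principle developed earlier in the paper that strategical reproducibility implies the factorization property.

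\emph{Step 1: Haar basis in $L^1[0,1]$.} The task is to exhibit a winning strategy for the reproducing player in the two-player game defining strategical reproducibility. Round by round, the strategy produces a block Haar system $(\tilde h_I)$, where each $\tilde h_I$ is a finite signed sum of original Haar functions whose supports partition a prescribed dyadic region. The block system should satisfy (i) $(1+\vp)$-equivalence to the Haar basis of $L^1$, and (ii) its closed linear span is complemented in $L^1$ by a projection of universally bounded norm. Property (i) is delivered by a classical Gamlen--Gaudet selection, which in $L^1$ produces a block Haar system that is in fact isometrically equivalent to the original.

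\emph{Step 2: The main obstacle.} Unlike in $L^p$ for $1<p<\infty$, where the Haar basis is unconditional and block projections are controlled by square-function estimates, in $L^1$ no such machinery is available. The key idea is to arrange the blocks so that the natural projection onto the span of $(\tilde h_I)$ coincides with the conditional expectation with respect to the $\sigma$-algebra generated by $\supp(\tilde h_I)$; such conditional expectations are contractions on $L^1$, so property (ii) is secured automatically. The dynamical structure of the game is essential here: the reproducing player must choose supports that refine a planned filtration without knowing the adversary's future challenges, which is accomplished by an inductive construction maintaining a sufficiently large reservoir of admissible dyadic intervals from which later blocks can be built.

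\emph{Step 3: Bi-parameter case in $L^1([0,1]^2)$.} Since the bi-parameter Haar basis is not unconditional, the unconditional-sum stability theorem does not apply directly. The plan is to iterate the one-parameter $L^1$ strategy along the Fubini decomposition $L^1([0,1]^2)\cong L^1([0,1];L^1[0,1])$: at each round of the bi-parameter game, perform coordinated Gamlen--Gaudet selections in each variable to produce blocks of the form $\tilde h_I\otimes \tilde h_J$, and analogously for the mixed tensors $\tilde h_I\otimes\mathbf{1}$ and $\mathbf{1}\otimes\tilde h_J$ when these are part of the system. The resulting block projection factors as a product of conditional expectations, hence is again a contraction on $L^1([0,1]^2)$.

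\emph{Step 4: Tensor product with $\ell^p$.} This follows immediately from the abstract stability result that strategical reproducibility is inherited by unconditional sums. Since $\ell^p$ has a $1$-unconditional basis and the Haar basis is strategically reproducible by Step 1 (or by the corresponding cases for $1<p<\infty$ obtained earlier in the paper), the tensor product basis --- canonically realized as the $\ell^p$-sum of copies of the Haar basis --- is strategically reproducible. No separate construction is required here; the main weight of the theorem lies in Steps 1--2.
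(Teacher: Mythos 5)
There are two genuine gaps, one in your overall framework and one in Step 3.

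First, the general principle you invoke is not ``strategical reproducibility implies the factorization property.'' Theorem~\ref{thm:strat-rep:1} requires \emph{two} hypotheses: that the basis is $C$-strategically reproducible \emph{and} that it has the uniform diagonal factorization property. The second hypothesis is automatic for unconditional bases, but the Haar system in $L^1$ is conditional, so you must prove separately that every bounded Haar multiplier on $L^1$ with diagonal entries bounded below by $\delta$ is a factor of the identity. Your proposal never addresses diagonal operators. The paper handles this in Proposition~\ref{L1 d-f}: it uses the Semenov--Uksusov characterization of bounded Haar multipliers on $L^1$ (boundedness is equivalent, up to constants, to finiteness of the variation $\|D\|_W$ along chains of nested dyadic intervals) to find an interval $I_0$ on which $D$ is within $\varepsilon$ of a multiple of the identity, and then uses the fact that $[h_I : I\subset I_0]$ contains a $1$-complemented isometric copy of $L^1$. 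Without this ingredient your argument does not close even for the one-parameter case.

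Second, Step 3 attempts to prove that the bi-parameter Haar system is strategically reproducible in $L^1([0,1]^2)$; the paper explicitly lists this as an \emph{open problem}. Coordinating Gamlen--Gaudet selections in both variables against an adaptive adversary is exactly the difficulty that blocks the known approaches (it is the $L^1$-analogue of Capon's problem for $L^p(L^1)$, also open), and your assertion that the block projection ``factors as a product of conditional expectations'' does not address the equivalence of the resulting block system to the bi-parameter Haar basis in the $L^1(L^1)$ norm, nor the diagonal factorization property for bi-parameter Haar multipliers, which is likewise not available. The paper's actual route (Corollary~\ref{cor:L^1(L^1)}) avoids the bi-parameter game entirely: since $L^1(L^1)$ is isometric to $L^1$ and $P = P_{[0,1)}\otimes I_{L^1}$ is a norm-one projection onto the slice $Y = [h_\emptyset\otimes h_J]_J$, which is isometric to $L^1$, an operator $T$ on $L^1(L^1)$ with large diagonal yields an operator $P\circ T|_Y$ on $L^1$ with large diagonal, and the one-parameter result produces the factorization. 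Your Step 4 ($\ell^p$-sums) is correct and matches the paper's Theorem~\ref{thm:sums:strat:rep:1}, modulo the same caveat that the summands must also carry the diagonal factorization property.
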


The paper is organized as follows.  Section~\ref{sec:basic-concepts} covers basic concepts relevant
to this work.  In Section~\ref{sec:strat-repr-cond}, we define three notions of strategical
reproducibility and show that those imply the factorization property.  In
Section~\ref{sec:basic-overview-multi} we review basic properties of multi-parameter Lebesgue- and
Hardy spaces.  In Section~\ref{sec:strat-repr-haar} we establish that the Haar system is
strategically reproducible in several classical Banach spaces such as reflexive, multi-parameter
Lebesgue spaces, $H^1$ and two-parameter Hardy spaces $H^p(H^q)$, $1\leq p,q < \infty$.  In
Section~\ref{sec:haar-system-l1_0-1} we show that the Haar system is strategically reproducible in
$L^1_0$.  In Section~\ref{sec:uncond-sums-spac} we show that unconditional sums of spaces with
strategically reproducible bases have themselves that property.  Finally, we discuss open problems
in Section~\ref{sec:final-comments-open}.

\section{A brief discussion of basic concepts}
\label{sec:basic-concepts}

We discuss several closely related concepts for operators on Banach spaces.
 
\begin{defin}
  Let $X$ be a Banach space and $T:X\to X$ be a bounded linear operator.
  \begin{itemize}
  \item[(i)] We say that $T(X)$ {\em contains a copy of $X$} if there is a (necessarily closed)
    subspace $Y$ of $T(X)$ that is isomorphic to $X$.
  \item[(ii)] We say that $T$ {\em preserves a copy of $X$ (or fixes a copy of $X$)} if there exists
    a subspace $Y$ of $X$ that is isomorphic to $X$ and $T$ restricted on $Y$ is an into
    isomorphism.
  \item[(iii)] We say that { \em the identity operator $I$ on $X$ factors through $T$} if there are
    bounded linear operators $R,S:X\to X$ with $I = STR$.
  \end{itemize}
  We also consider a quantified version of (iii). For $K>0$ we say that {\em the identity
    $K$-factors through $T$ } if there are bounded linear operators $R,S:X\to X$ with
  $\|R\|\cdot\|S\|\le K$ and $I = STR$ and we say that {\em the identity almost $K$-factors through
    $T$ } if it $(K+\vp)$-factors through $T$ for all $\vp>0$.
\end{defin}

\begin{remark}
  In general, for a given operator $T$, it is easy to see that (iii)$\Rightarrow$(ii) and
  (ii)$\Rightarrow$(i). The converse implications are in general false. To see that
  (i)$\not\Rightarrow$(ii) take a quotient operator $T_0:L^1\to\ell_1$ and a quotient operator
  $T_1:\ell_1\to L^1$. Then if $T = T_1\circ T_0:L^1\to L^1$, $T(L^1) = L^1$ however $T$ does not
  preserve a copy of $L^1$. There is an example demonstrating (iii)$\not\Rightarrow$(ii) but it is
  slightly more involved. We first observe that if $I = STR$ and $Z = TR(X)$, then $Z$ is isomorphic
  to $X$ and complemented in $X$. Indeed, it follows that $R$ is bounded below and $T$ is bounded
  below on $R(X)$ hence $TR$ is an isomorphic embedding. Furthermore, $S$ restricted on $Z = TR(X)$
  is an isomorphism onto $X$. Therefore, we can define the inverse map $S|_{Z}^{-1}:X\to Z$. One can
  check that $Px = S|_{Z}^{-1}(Sx)$ defines a bounded projection onto $Z$. This easy fact implies
  that if $X$ is a minimal space that is not complementably minimal then there exists an operator
  $T:X\to X$ that is an into isomorphism so that the identity does not factor through $X$. To see
  this, choose a subspace $Y$ of $X$ that is isomorphic to $X$ and does not contain a further
  subspace isomorphic to $X$ and complemented in $X$. If $T:X\to X$ is an into isomorphism, the
  image of which is $Y$, then the identity does not factor through $T$. Indeed, if $I = STR$ then
  $Z = TR(X)$ is isomorphic to $X$ and complemented to $X$. This is not possible because $Z$ is a
  subspace of $Y$. In conclusion, the fact that (iii)$\not\Rightarrow$(ii) is reduced to the
  existence of a minimal and not complementably minimal space $X$. It is well known that the dual of
  Tsirelson space has this property, however to the best of our knowledge there is no recorded proof
  of this fact so we give a short description of it here. Assume that $T^*$ is complementably
  minimal. We will show that this would imply that $T$ is minimal, which was proved to be false in
  \cite{CasazzaShura1989}*{Corollary VI.b.6, page 58}. Let $X$ be an infinite dimensional subspace
  of $T$. By \cite{CasazzaJohnsonTzafriri1984}*{Theorem 1} $X$ is isomorphic to a quotient of $T$
  and hence $X^*$ is isomorphic to a subspace of $T^*$. If $T^*$ is complementably minimal, then
  $X^*$ contains a complemented copy of $T^*$ which yields that $X$ contains a complemented copy of
  $T$. In particular, $T$ is minimal and this cannot be the case.
\end{remark}

The following definition of $C$-perturbable was introduced by Andrew in~\cite{Andrew1979}.  The
concept of large diagonal, which was implicitly present in~\cite{Andrew1979}, was formally
introduced in~\cite{laustsen:lechner:mueller:2015}.
\begin{defin}
  \label{factorization definitions}
  Let $X$ be a Banach space with a normalized Schauder basis $(e_k)_k$.
  \begin{itemize}
  \item[(i)] Let $0<C\leq 1$. The basis $(e_k)_k$ is called {\em $C$-perturbable} if whenever
    $T:X\to X$ is a bounded linear operator for which there exists $\delta>0$ with
    $\|T(e_k)-e_k\|<C-\delta$ for all $k\in\N$, then $T(X)$ contains a copy of $X$.
  \item[(ii)] If an operator $T$ on $X$ satisfies $\inf_k\big|e_k^*(T(e_k))\big|>0$, then we say
    that $T$ has \emph{large diagonal}.
  \item[(iii)] An operator $T$ on $X$ satisfying $e^*_m(T(e_k) )= 0$ whenever $k\neq m$, is called
    \emph{diagonal operator}.
  \item[(iv)] We say that the basis $(e_k)_k$ has the {\em factorization property} if whenever
    $T:X\to X$ is a bounded linear operator with $\inf_k|e_k^*(Te_k)|>0$ then the identity of $X$
    factors through $T$.
  \end{itemize}
\end{defin}

\begin{remark}
  It is easy to see that a basis that has the factorizing property is also 1-perturbable. However,
  there are bases that are $C$-perturbable without the factorization property, as the following
  example shows.
\end{remark}

The norm on the boundedly complete basis of James space $(e_i)_i$ is defined as follows:
\begin{equation}
  \label{james formula}
  \Big\|\sum_{i=1}^na_ie_i\Big\| = \sup\Big(\sum_{k=1}^m\Big(\sum_{i\in E_k}a_i\Big)^2\Big)^{1/2},
\end{equation}
where the supremum is taken over $m\in\mathbb{N}$ and sequences of successive intervals
$(E_k)_{k=1}^m$ of natural numbers. Let $J$ denote the completion of the linear span of $(e_i)_i$
with this norm. Some well known important properties of $J$ are the following:
\begin{itemize}

\item[(i)] The basis $(e_i)_i$ is spreading. In particular for any sequence scalars $(a_i)_{i=1}^n$
  and natural numbers $k_1<\cdots<k_n$ we have
  \begin{equation}
    \label{jamespreading}
    \Big\|\sum_{i=1}^na_ie_i\Big\| = \Big\|\sum_{i=1}^na_ie_{k_i}\Big\|.
  \end{equation}

\item[(ii)] The sequence $(e_i)_i$ is non-trivial weak Cauchy, i.e., there is
  $e^{**}\in J^{**}\setminus J$ so that $w^*\text{-}\lim_ie_i = e^{**}$. Additionally,
  $\mathrm{dist}(e^{**},J) = 1$.

\item[(iii)] The space $J$ is quasi-reflexive of order one. In particular,
  $J^{**} = \mathbb{R}e^{**}\oplus J$, i.e., $J^{**}$ is spanned by $e^{**}$ and the canonical
  embedding of $J$ in $J^{**} $.

\end{itemize}

Note that by (iii) if the identity factors through an operator on $J$ then that operator cannot be
weakly compact.

\begin{prop}
  \label{jamesnotfac}
  There exists a weakly compact operator $T:J\to J$ with $e_i^*(T(e_i) )= 1$, for all
  $i\in\mathbb{N}$. In particular, $(e_i)$ dos not have the factorization property in $J$.
\end{prop}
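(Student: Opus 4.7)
My plan is to construct $T$ as $I - S$ where $S$ is the right shift on the basis, so that the large diagonal condition is immediate while the perturbation produces weakly null differences exploiting the weak-Cauchy structure of $(e_i)$.

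First, I would define $S\colon J \to J$ by $S(e_i) = e_{i+1}$ and linear extension. The spreading property \eqref{jamespreading} applied with $k_i = i+1$ gives $\|\sum a_i e_{i+1}\| = \|\sum a_i e_i\|$, so $S$ is an isometry on $\spa\{e_i\}$ and therefore extends to an isometric endomorphism of $J$. Setting $T := I - S$ yields a bounded operator with $\|T\| \le 2$, and the identity $T(e_i) = e_i - e_{i+1}$ gives $e_i^*(T(e_i)) = 1$ for every $i$.

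The core of the argument is verifying that $T$ is weakly compact. By Gantmacher's theorem this reduces to $T^{**}(J^{**}) \subset J$. Property (iii) provides the decomposition $J^{**} = \R e^{**} \oplus J$, and since $T^{**}$ restricted to the canonical copy of $J$ coincides with $T$, it suffices to check $T^{**}(e^{**}) \in J$. Using (ii), namely $e_i \xrightarrow{w^*} e^{**}$, and the $w^*$-$w^*$ continuity of $T^{**}$, we obtain $T^{**}(e^{**}) = w^*\text{-}\lim_i T(e_i) = w^*\text{-}\lim_i (e_i - e_{i+1}) = 0 \in J$, since $e_{i+1}$ has the same $w^*$-limit as $e_i$.

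Finally, the ``In particular'' clause follows from the observation recorded immediately before the proposition: a weakly compact operator on $J$ cannot have the identity factor through it (otherwise $I_J^{**} = S^{**}T^{**}R^{**}$ would land inside $J$, contradicting quasi-reflexivity), so $(e_i)$ fails to have the factorization property. The only delicate point in the plan is the bidual calculation, but properties (ii) and (iii) are tailored precisely to make it work.
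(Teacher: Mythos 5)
Your proof is correct and follows essentially the same route as the paper: the same operator $T = I - S$ with $S$ the isometric shift, and the same key computation that $S^{**}(e^{**}) = e^{**}$ via the $w^*$-limit of $(e_{i+1})$. The only difference is cosmetic: you verify weak compactness through Gantmacher's criterion $T^{**}(J^{**})\subset J$, whereas the paper argues directly that $(Tx_i)$ has weakly convergent subsequences for bounded $(x_i)$ using the decomposition $J^{**} = \mathbb{R}e^{**}\oplus J$ — these are equivalent formulations of the same argument.
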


\begin{proof}
  By (ii) the operator $S:J\to J$ given by $Se_i = e_{i+1}$, for all $i\in\mathbb{N}$, is a linear
  isometry. We define $T = I - S$, which has norm at most two. We will show that $S$ is weakly
  compact by showing that for every bounded sequence $(x_i)_i$ the sequence $(Tx_i)_i$ has a weakly
  convergent subsequence. By the separability of $J^*$, we pass to a subsequence so that $(x_i)_i$
  converges in the $w^*$-topology to some $x^{**}\in J^{**}$. By (iii) there is $x\in J$ and
  $c\in\mathbb{R}$ so that $x^{**} = x + ce^{**}$. We have
  \begin{align*}
    S^{**}(x^{**} )
    &= S(x) + cS^{**}(e^{**} )
      = S(x) + c\Big(w^*\text{-}\lim_iS(e_i)\Big)\\
    &= S(x) + c\Big(w^*\text{-}\lim_ie_{i+1}\Big)
      = S(x) + ce^{**}.
  \end{align*}
  Thus $w^*$-$\lim_iTx_i =
  w^*$-$\lim_i \big(x_i - S(x_i)\big) = x + ce^{**} - \big(S(x)+ce^{**}\big) = x-S(x)$.  Because the
  $w^*$-limit is in $J$ it has to be a weak limit.
\end{proof}

We wish to show now that the boundedly complete basis of James space is perturbable. To achieve that
we shall need the following well known fact. We describe a proof for completeness.

\begin{prop}
  \label{many projections}
  Let $(x_i)_i$ be a non-trivial weak Cauchy sequence in $J$. Then, $(x_i)_i$ has a subsequence
  $(x_{j_i})_i$ that is equivalent to $(e_i)_i$ so that there exists a bounded linear projection
  $P:J\to[(x_{j_i})_i]$.
\end{prop}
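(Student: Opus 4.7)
My plan is to reduce via a gliding hump to a block basis of $(e_k)$ with a near-normalization property, then use the James norm formula~\eqref{james formula} to obtain both equivalence to $(e_i)$ and a bounded projection onto the span.

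First, by property~(iii) above I write $w^*\text{-}\lim_i x_i = \xi + c e^{**}$ with $\xi\in J$ and $c\in\R$; non-triviality forces $c\neq 0$, and after rescaling I may assume $c=1$. Because $e^{**}$ annihilates each coordinate functional $e_k^*$, the coordinates $e_k^*(x_i)$ converge to $e_k^*(\xi)$ as $i\to\infty$. Fix $N\in\N$ large enough that $\xi_N := \sum_{k=1}^N e_k^*(\xi)e_k$ approximates $\xi$ in norm. A standard gliding hump on $(x_i)$ then extracts a subsequence $(x_{j_i})$ together with successive intervals $F_1 < F_2 < \cdots$ of $\N$ with $\min F_1 > N$, giving a decomposition $x_{j_i} = \xi_N + u_i + \delta_i$, where $u_i = \sum_{k\in F_i} e_k^*(x_{j_i})e_k$ is a block of $(e_k)$ and $\sum_i\|\delta_i\|<\infty$.

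The central trick is the functional $f_i\in J^*$ given by $f_i(z) = \sum_{k\in F_i} e_k^*(z)$; taking $F_i$ as a single block $E_1$ in~\eqref{james formula} shows $\|f_i\|\leq 1$. The action of $e^{**}$ on the ``global summation functional'' $\sum_k e_k^*$ equals $1$, so evaluating $f_i$ on $x_{j_i}-\xi\to e^{**}$ yields $f_i(u_i)\to 1$. Using this near-normalization together with the interval structure of~\eqref{james formula}, I would show that any interval partition appearing in the supremum defining $\|\sum_i a_i u_i\|$ can be replaced, with controlled distortion, by one respecting the block decomposition $(F_i)$, producing a constant $C>0$ with
\[
C^{-1}\Big\|\sum_i a_i e_i\Big\|\leq \Big\|\sum_i a_i u_i\Big\|\leq C\Big\|\sum_i a_i e_i\Big\|
\]
for every finitely supported scalar sequence $(a_i)$. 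The projection onto $[u_i]$ is then built from the uniformly bounded functionals $u_i^* := f_i/f_i(u_i)$, which satisfy $u_i^*(u_j) = \delta_{ij}$; the operator $P(z) = \sum_i u_i^*(z)u_i$ maps $J$ boundedly onto $[u_i]$ by the same partition analysis.

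Finally, I transfer these properties to $(x_{j_i})$: the upper bound on $\|\sum_i a_i x_{j_i}\|$ is handled by the triangle inequality together with the trivial estimate $|\sum_i a_i|\leq \|\sum_i a_i e_i\|$ (which absorbs the fixed vector $\xi_N$), while the lower bound is obtained by restricting the interval partitions in~\eqref{james formula} to $\bigcup_i F_i\subset (N,\infty)$, on which $\xi_N$ vanishes, reducing the norm to $\|\sum_i a_i u_i\|$ up to a summable perturbation from $(\delta_i)$. The projection onto $[(x_{j_i})_i]$ then follows from the standard small-perturbation principle for basic sequences. The main obstacle is the middle step: arbitrary interval partitions of $\N$ can split the blocks $F_i$ in complicated ways, and one has to argue that such splitting never helps beyond a controlled factor, exploiting that the contribution of $u_i$ to a single interval sum is bounded by $\|u_i\|$, while across blocks the normalization $f_i(u_i)\to 1$ forces the inter-block structure to be asymptotically identical to $(e_i)$.
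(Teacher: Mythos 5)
Your route is genuinely different from the paper's: the paper disposes of this proposition in one line by invoking Proposition~7.4 of Argyros--Motakis--Sari (checking only that $(e_i)$ is equivalent to its convex block sequences and is not equivalent to the summing basis of $c_0$), whereas you give a self-contained gliding-hump argument inside $J$. The core of your argument is sound: weak Cauchy sequences are bounded, so $\sup_i\|u_i\|<\infty$; the summing functionals $f_i$ have norm at most $1$ by the single-interval estimate in \eqref{james formula}; $f_i(u_i)\to 1$ follows exactly as you say (and after a further subsequence you may assume $\sum_i|f_i(u_i)-1|$ is as small as you like, which you will need); and the two-sided estimate for $\|\sum_i a_i u_i\|$ does go through --- for the upper bound, an arbitrary interval $E_k$ meets an interval of blocks, the interior blocks contribute $\sum a_i f_i(u_i)$ over successive index intervals, and the total squared contribution of the fragments of a fixed $u_i$ is at most $a_i^2\|u_i\|^2$ because those fragments themselves form an interval partition of $F_i$. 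Take the $F_i$ to tile $(N,\infty)$ with no gaps, so that $\sum_{i\in G_k}f_i(z)$ is again a single interval sum of the coordinates of $z$; this is what makes $P$ bounded. Your treatment of the norm equivalence of $(x_{j_i})$ (triangle inequality above, restriction of the partitions to $(N,\infty)$ below) is also correct.

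The genuine gap is the last sentence. The sequence $(x_{j_i})$ is \emph{not} a small perturbation of $(u_i)$: $x_{j_i}-u_i=\xi_N+\delta_i$ contains the fixed vector $\xi_N$, so $\sum_i\|x_{j_i}-u_i\|\,\|u_i^*\|=\infty$ whenever $\xi\neq 0$, and the standard small-perturbation principle simply does not apply; moreover the naive correction operator $z\mapsto\sum_i u_i^*(z)(x_{j_i}-u_i)$ contains the term $\bigl(\sum_i u_i^*(z)\bigr)\xi_N$, which is of order $\|z\|\,\|\xi_N\|$ and not small. The fix is available inside your own setup but must be said: since $\supp(f_l)\subset F_l\subset(N,\infty)$ while $\xi_N$ is supported in $[1,N]$, one has $u_l^*(\xi_N)=0$ for every $l$, hence $u_l^*(x_{j_i})=\delta_{li}+u_l^*(\delta_i)$ and $P(x_{j_i})=u_i+P(\delta_i)$ with $\sum_i\|P(\delta_i)\|$ as small as desired. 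Letting $R:[u_i]\to[(x_{j_i})]$ be the isomorphism $u_i\mapsto x_{j_i}$ (both sequences being equivalent to $(e_i)$), the bounded operator $RP$ satisfies $RP(x_{j_i})=x_{j_i}+RP(\delta_i)$, so $RP$ restricted to $V=[(x_{j_i})_i]$ is invertible once $\sum_i\|\delta_i\|$ is small enough, and $\bigl(RP|_V\bigr)^{-1}RP$ is the required bounded projection onto $V$. Without this extra observation the passage from the complementation of $[u_i]$ to that of $[(x_{j_i})]$ is unjustified. (A second, minor point: you should also arrange, by choosing the cut points $M_i$ so that the partial sums $\sum_{j\le M_i}\xi_j$ converge fast, or simply by using $\xi_N$ supported in $[1,N]$ as above, that the functionals $u_l^*$ see only a summable trace of the limit vector; as written, your decomposition $x_{j_i}=\xi_N+u_i+\delta_i$ with $\sum_i\|\delta_i\|<\infty$ is fine provided $u_i$ is taken to be $P_{(N,M_i]}x_{j_i}$, in which case $f_i(u_i)\to 1+s(\xi-\xi_N)$ rather than $1$; this is harmless for large $N$ but should be noted.)
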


\begin{proof}
  Proposition 7.4 from \cite{ArgyrosMotakisSari2017} says the result holds, provided that the
  sequence $(e_i)_i$ is equivalent to its convex block sequences and not equivalent to the summing
  basis of $c_0$.  Both of these properties follow from \eqref{james formula}.
\end{proof}

\begin{prop}
  \label{james pert}
  Let $T:J\to J$ be a bounded linear operator with the property $\sup_i\|Te_i - e_i\| < 1$. Then the
  identity factors through $T$. That is, the boundedly complete basis of $J$ is perturbable.
\end{prop}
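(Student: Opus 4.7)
The plan is to locate a subsequence $(Te_{j_i})$ that is equivalent to $(e_i)$ and whose closed linear span is complemented in $J$; once this is done, the factorization of the identity through $T$ is a short formal argument using Proposition~\ref{many projections} and the spreading property \eqref{jamespreading}. Throughout, let $\eta = \sup_i \|Te_i - e_i\| < 1$.

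\textbf{Step 1: The sequence $(Te_i)$ is non-trivial weak Cauchy.} Since $(e_i)$ is weak Cauchy (by property (ii)) and $T$ is bounded, $(Te_i)$ is weak Cauchy, so it converges in the weak-$*$ topology of $J^{**}$ to some $y^{**}\in J^{**}$. The delicate point, and the main obstacle in the proof, is to rule out $y^{**}\in J$. If $y^{**} = y\in J$, then $e_i - Te_i$ converges weak-$*$ to $e^{**} - y\in J^{**}$, and by the weak-$*$ lower semicontinuity of the norm on $J^{**}$,
\begin{equation*}
  \|e^{**} - y\| \;\le\; \liminf_i \|e_i - Te_i\| \;\le\; \eta \;<\; 1,
\end{equation*}
which contradicts $\mathrm{dist}(e^{**},J) = 1$ from property (ii). Hence $y^{**}\in J^{**}\setminus J$, i.e. $(Te_i)$ is non-trivial weak Cauchy.

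\textbf{Step 2: Pass to a good subsequence.} Applying Proposition~\ref{many projections} to $(Te_i)$, we obtain indices $j_1 < j_2 < \cdots$ such that $(Te_{j_i})_i$ is equivalent to $(e_i)_i$ and such that its closed linear span $Y = \overline{\mathrm{span}}(Te_{j_i}:i\in\N)$ is the range of a bounded linear projection $P:J\to Y$.

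\textbf{Step 3: Build $R$ and $S$ with $I = STR$.} Define $R:J\to J$ by $Re_i = e_{j_i}$; this is a (linear) isometry by the spreading property \eqref{jamespreading}. Since $(e_{j_i})_i$ is equivalent to $(e_i)_i$ (again by \eqref{jamespreading}) and $(Te_{j_i})_i$ is equivalent to $(e_i)_i$ by Step~2, the operator $V:Y\to J$ determined by $V(Te_{j_i}) = e_i$ is a well-defined bounded isomorphism onto $J$. Set $S = V\circ P:J\to J$. Then for every $i\in\N$,
\begin{equation*}
  STR(e_i) \;=\; SP(T e_{j_i}) \;=\; V(Te_{j_i}) \;=\; e_i,
\end{equation*}
so $STR = I$ on $J$, and the identity factors through $T$. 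In particular, the boundedly complete basis of $J$ is $1$-perturbable. The only genuinely non-formal ingredient is Step~1, where the gap $1 - \eta$ between the perturbation bound and $\mathrm{dist}(e^{**},J)=1$ is precisely what keeps the weak-$*$ limit of $(Te_i)$ outside $J$.
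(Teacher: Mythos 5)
Your proof is correct and follows essentially the same route as the paper's: you use the weak-$*$ lower semicontinuity of the bidual norm together with $\mathrm{dist}(e^{**},J)=1$ to see that $(Te_i)$ is non-trivial weak Cauchy, then invoke Proposition~\ref{many projections} and the spreading property to assemble the factorization exactly as in the paper (your $R$, $V\circ P$ are the paper's $A$, $R\circ P$). The only cosmetic difference is that you phrase Step~1 as a contradiction, whereas the paper bounds $\|T^{**}e^{**}-e^{**}\|$ directly and concludes $\mathrm{dist}(T^{**}e^{**},J)\ge 1-C>0$.
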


\begin{proof}
  If $C = \lim\inf\|T(e_i)-e_i\| <1$ then we have $\|T^{**}(e^{**} )- e^{**}\| \leq C$ and
  therefore, from (ii) $\mathrm{dist}(T^{**}(e^{**}),J) \geq 1-C>0$. This means that
  $T^{**}(e^{**})$, which is the $w^*$-limit of $(T(e_i))_i$, is not in $J$. In other words,
  $(T(e_i))_i$ is non-trivial weak Cauchy. By Proposition \ref{many projections} there is a
  subsequence $(T(e_{j_i}))_i$ of $(T(e_i))_i$ that is equivalent to $(e_i)_i$ and a bounded linear
  projection $P:J\to W = [(T(e_{j_i}))_i]$. Let $A:J\to J$ be the map defined by $Ae_i = e_{j_i}$,
  which by (i) is bounded. Let $R:W\to J$ be the isomorphism given by $R(T(e_{j_i})) = e_i$ and set
  $B:J\to J$ with $B = R\circ P$. It is easy to see that $I = B\circ T\circ A$.
\end{proof}

\section{Strategical Reproducibility, a condition implying the factorization property}
\label{sec:strat-repr-cond}

In this section we formulate several versions of a property of bases we call \emph{strategical
  reproducibility} and show that they imply the factorization property.

\subsection*{Notation and conventions}

All our Banach spaces are assumed to be over the real numbers $\R$.  $B_X$ denotes the unit ball,
and $S_X$ denotes the unit sphere of a Banach space $X$.  $c_{00}$ denotes the sequences in $\R$
which eventually vanish.

For a Banach space $X$ we denote by $\cof(X)$ the set of cofinite dimensional subspaces of $X$,
while $\cof_{w^*}(X^*)$ denotes the cofinite dimensionl $w^*$-closed subspaces of $X^*$.

If $\eb=(e_i)$ is a basis of a Banach space $X$, we call for $x=\sum_{i=1}^\infty x_ie_i\in X$ the
set $\{i\in\N: x_i\not= 0\}$ {\em the support of $x$ with respect to $(\eb)$} and denote it by
$\supp_{\eb}(x)$. If there is no confusion possible, we also may write $\supp(x)$ instead of
$\supp_{\eb}(x)$.

We recall that a basis $(e_n)$ of a Banach space $X$ is {\em shrinking} if the coordinate
functionals $(e^*_n)$ are a basis of $X^*$, and {\em unconditional} if for some constant $c\ge 1$
and all finite sequence of scalars $(a_i)_{i=1}^n$, and all $\sigma=(\sigma_i)_{i=1}^n\in \{\pm 1\}$
\begin{equation*}
  \Big\|\sum_{i=1}^n \sigma_i a_i e_i\Big\|\le c\Big\|\sum_{i=1}^n  a_i e_i\Big\|.
\end{equation*}

Let $(x_i)_i$ and $(y_i)_i$ be Schauder basic sequences in (possibly different) Banach spaces and
$C\geq 1$. We say that $(x_i)_i$ and $(y_i)_i$ are {\em $C$-equivalent} if there are $A,B>0$, with
$A\cdot B\le C$ so that for any $(a_i)_i\in c_{00}$ of scalars we have
\begin{equation*}
  \frac1A\Big\|\sum_{i=1}^\infty a_iy_i\Big\|\le \Big\|\sum_{i=1}^\infty a_ix_i\Big\|\le B\Big\|\sum_{i=1}^\infty a_iy_i\Big\|.
\end{equation*}
We say that $(x_i)_i$ and $(y_i)_i$ are {\em ``impartially $C$-equivalent''} if for any finite
choice of scalars $(a_i)_i\in c_{00}$ we have
\begin{equation*}
  \frac{1}{\sqrt C}\Big\|\sum_{i=1}^\infty a_iy_i\Big\| \leq \Big\|\sum_{i=1}^\infty a_ix_i\Big\| \leq \sqrt{C}\Big\|\sum_{i=1}^\infty a_iy_i\Big\|.
\end{equation*}
Note that if two sequences are $C$-equivalent then by scaling one of them we can always make them
impartially $C$-equivalent.

We now formally define the concept of \emph{strategical reproducibility} depending on properties of
the basis of a Banach space.  The most general form will be given in Defitinion~\ref{D:1.3}.
Nevertheless, under additional assumptions on the basis, this notion considerably simplifies.  The
proof that the different definitions of strategical reproducibility are equivalent under their
respective assumptions on the basis will be given later.

If we demand that our basis is unconditional and shrinking strategical reproducibility can be
defined as follows.
\begin{defin}\label{D:1.1}
  Assume that $X$ is a Banach space with a basis $(e_n)$ which is unconditional and shrinking. Let
  $(e^*_n)\subset X^*$ be the corresponding coordinate functionals.  We say that $(e_n)$ is {\em
    strategically reproducible} if the following condition is satisfied for some $C\ge 1$:

  \begin{align}
    &\label{E:1.1.1}\qquad\qquad\forall n_1\kin\N\,\exists b_1\kin \spa(e_n:n\kge n_1) \,\exists b_1^*\kin \spa(e^*_n:n\ge n_1)\\
    &\qquad\qquad \forall n_2\kin\N\,\exists b_2\kin \spa(e_n:n\kge n_2)
      \,\exists b_2^*\kin \spa(e^*_n:n\kge n_2)\notag\\
    &\qquad \qquad\forall n_3\kin\N \,\exists b_3\kin \spa(e_n:n\kge n_3)
      \,\exists b_3^*\kin \spa(e^*_n:n\kge n_3) \notag\\
    &\qquad\qquad\vdots \notag \\
    &\qquad\qquad\text{so that:} \notag\\                                                  
    &\qquad\qquad(b_{k}) \text{ is impartially  $C$-equivalent to }(e_k),\text{ and }\tag{\ref{E:1.1.1}a}\\
    &\qquad\qquad(b^*_{k}) \text{ is impartially $C$-equivalent to }(e^*_k), \notag\\
    &\qquad\qquad b^*_k(b_l)=\delta_{k,l}\text{ for all $k,l\in\N$.} \tag{\ref{E:1.1.1}b}
  \end{align}
\end{defin}

\begin{remark} 
  Condition \eqref{E:1.1.1} in Definition \ref{D:1.1} can be interpreted that one player in a
  two-person game has a winning strategy:

  We fix $C\ge1$.  Player I chooses $n_1\kin\N$, then player (II) chooses
  $b_1\kin \spa(e_n:n\kge n_1)$ and $b_1^*\kin \spa(e^*_n:n\kge n_1)$.  They repeat the moves
  infinitely many times, obtaining for every $k\kin\N$ numbers $n_k$, and vectors $b_k$ and $b^*_k$.
  Player II wins if he was able to choose the sequences $(b_n)\subset X$ and $(b^*_n)\subset X^*$ so
  that (\ref{E:1.1.1}a), (\ref{E:1.1.1}b) are satisfied.  Thus, the basis $(e_i)$ is strategically
  reproducible if and only if for some $C\ge 1$ player (II) has a winning strategy.

  In general it is not true that two-player games of infinite length are determined, \ie, that one
  of the player has a winning strategy. Nevertheless, for $C\ge 1$ it is easy to see that the set of
  all sequences $(b_k,b^*_k)$ in $(X\times X^*)^{\N}$ which satisfy \ref{E:1.1.1}(a) and
  \ref{E:1.1.1}(b) is Borel measurable (it is actually closed) with respect to the product topology
  of the discrete topology on $X\times X^*$, and thus it follows from the main result in
  \cite{Martin1975} that this game is determined. More on these {\em Infinite Asymptotic Games} can
  be found in \cite{OdellSchlumprecht2002}.
\end{remark}

Now we relax the condition on our basis $(e_i)$ and only require it be unconditional. In that case
we define {\em strategical reproducible} as follows.
\begin{defin}\label{D:1.2}
  Let $X$ be a Banach space with an unconditional basis $(e_i)_i$ and fix positive constants
  $C\geq 1$.

  Consider the following two-player game between player (I) and player (II). For $k\in\N$, turn $k$
  is played out in three steps.
  \begin{itemize}
  \item[Step\! 1:] Player (I) chooses $\eta_k>0$, $W_k\in\mathrm{cof}(X)$, and
    $G_k\in\mathrm{cof}_{w^*}(X^*)$,
  \item[Step\! 2:] Player (II) chooses a finite subset $E_k$ of $\N$ and sequences of non-negative
    real numbers $(\lambda_i^k)_{i\in E_k}$, $(\mu_i^k)_{i\in E_k}$ satisfying
    \begin{equation*}
      \sum_{i\in E_k}\lambda_i^{(k)}\mu_i^{(k)} = 1.
    \end{equation*}
  \item[Step\! 3:] Player (I) chooses $(\vp_i^{(k)})_{n\in E_k}$ in $\{-1,1\}^{E_k}$.

  \end{itemize}

  We say that player (II) has a winning strategy in the game $\mathrm{Rep}_{(X,(e_i))}(C)$ if he can
  force the following properties on the result:

  For all $n\in\N$ we set
  $x_k = \sum_{i\in E_k}\vp_i^{(k)} \lambda^{(k)}_ie_i \text{ and }x_k^* = \sum_{i\in
    E_k}\vp_i^{(k)}\mu^{(k)}_ie^*_i$ and demand:
  \begin{itemize}
  \item[(i)] the sequences $(x_k)_k$ and $(e_k)_k$ are impartially $C$-equivalent,
  \item[(ii)] the sequences $(x_k^*)_k$ and $(e_k^*)_k$ are impartially $C$-equivalent,
  \item[(iii)] for all $n\in\N$ we have $\mathrm{dist}(x_k, W_k) < \eta_k$, and
  \item[(iv)] for all $n\in\N$ we have $\mathrm{dist}(x^*_k, G_k) < \eta_k$.
  \end{itemize}
  We say that $(e_i)_i$ is {\em $C$-strategically reproducible in $X$} if for every $\eta >0$ player
  II has a winning strategy in the game $\mathrm{Rep}_{(X,(e_i))}(C+\eta)$.
\end{defin}

Finally we will not even require the basis $(e_j)$ to be unconditional and define {\em strategical
  reproducible} as follows:

\begin{defin}\label{D:1.3}
  Let $X$ be a Banach space with a normalized Schauder basis $(e_i)_i$ and fix positive constants
  $C\geq 1$, and $\eta>0$.

  Consider the following two-player game between player (I) and player (II):

  Before the first turn player (I) is allowed to choose a partition of $\N = N_1\cup N_2$. For
  $k\in\N$, turn $k$ is played out in three steps.
  \begin{itemize}
  \item[Step\! 1:] Player (I) chooses $\eta_k>0$, $W_k\in\mathrm{cof}(X)$, and
    $G_k\in\mathrm{cof}_{w^*}(X^*)$,
  \item[Step\! 2:] Player (II) chooses $i_k\in\{1,2\}$, a finite subset $E_k$ of $N_{i_k}$ and
    sequences of non-negative real numbers $(\lambda_i^k)_{i\in E_k}$, $(\mu_i^k)_{i\in E_k}$
    satisfying
    \begin{equation*}
      1-\eta <\sum_{i\in E_k}\lambda_i^{(k)}\mu_i^{(k)}< 1+\eta.
    \end{equation*}
  \item[Step\! 3:] Player (I) chooses $(\vp_i^{(k)})_{n\in E_k}$ in $\{-1,1\}^{E_k}$.

  \end{itemize}

  We say that player (II) has a winning strategy in the game $\mathrm{Rep}_{(X,(e_i))}(C,\eta)$ if
  he can force the following properties on the result:

  For all $n\in\N$ we set
  $x_k = \sum_{i\in E_k}\vp_i^{(k)} \lambda^{(k)}_ie_i \text{ and }x_k^* = \sum_{i\in
    E_k}\vp_i^{(k)}\mu^{(k)}_ie^*_i$ and demand:
  \begin{itemize}
  \item[(i)] the sequences $(x_k)_k$ and $(e_k)_k$ are impartially $(C+\eta)$-equivalent,
  \item[(ii)] the sequences $(x_k^*)_k$ and $(e_k^*)_k$ are impartially $(C+\eta)$-equivalent,
  \item[(iii)] for all $n\in\N$ we have $\mathrm{dist}(x_k, W_k) < \eta_k$, and
  \item[(iv)] for all $n\in\N$ we have $\mathrm{dist}(x^*_k, G_k) < \eta_k$.
  \end{itemize}
  We say that $(e_i)_i$ is {\em $C$-strategically reproducible in $X$} if for every $\eta >0$ player
  II has a winning strategy in the game $\mathrm{Rep}_{(X,(e_i))}(C,\eta)$.
\end{defin}

\begin{remark}
  We first want to observe that if $(e_i)$ is a normalized shrinking and unconditional basis then
  being strategically reproducible in the sense of Definition \ref{D:1.1} is equivalent with being
  $C$-strategically reproducible for some $C\ge 1$ in the sense of Definition \ref{D:1.3}.

  Indeed, assume $(e_i)$ is $1$-unconditional and shrinking and assume that for some $\tilde C\ge 1$
  \eqref{E:1.1.1} of Definition \ref{D:1.1} holds. We will show that $(e_i)$ is
  $3\tilde C $-strategically reproducible in the sense of Definition \ref{D:1.3}.

  Let $1/3>\eta>0$ be given and assume player (I) has at the beginning of the game chosen a
  partition $(N_1,N_2)$ of $\N$.  At the $k$-th step player (I) chooses $\eta_k>0$ and spaces
  $W_k\in\cof(X)$ and $G_k\in\cof_{w^*}(X)$. Since $(e_k)$ is shrinking, player (II) can
  ``approximate $W_k$ by a tail space'' as follows: there is $n^{(1)}_k\in\N$ so that for all
  $x\in B_X\cap[e_i:i\kge n^{(1)}_k]$ it follows that $\dist(x,W_k)\kle \eta_k/2\tilde C$. Secondly,
  since $G_k$ is a $w^*$-closed and cofinite dimensional subspace of $X^*$, and thus the annihilator
  of a finite subset of $X$, we find $n^{(2)}_k \kin\N$ so that for all
  $x^*\kin B_{X^*}\cap[e^*_i:i\kge n^{(2)}_k]$ it follows that $\dist(x^*,W_k)<\eta_k/2\tilde C$.
  Finally we let $n^{(3)}_k=1+\max\big(\bigcup_{j=1}^{k-1} \supp(x_j)\cup\supp(x^*_j)\big)$.  Let
  $n_k=\max(n^{(1)}_k, n^{(2)}_k, n^{(3)}_k)$ and let player (II) follow his winning strategy,
  assuming player (I) has chosen $n_k\in \N$ in his $k$-th move of the game described in Definition
  \ref{D:1.1}, and let $b_k\in [e_i:i\ge n_k]$ and $b^*_k\in [e^*_i:i\ge n_k]$ be chosen according
  to that strategy, which in particular implies that $\|b_k\|,\|b_k^*\| \leq \sqrt{\tilde C}$.  We
  write $b_k$ and $b^*_k$ as
  \begin{equation*}
    b_k=\sum_{j=1} ^\infty \tilde\lambda^{(k)}_j e_j \text{ and } b^*_k=\sum_{j=1} ^\infty \tilde\mu^{(k)}_j e_j .
  \end{equation*}
  By reducing the supports, if necessary we can assume, by using Proposition \ref{P:1.3}, that
  $\tilde E_k=\supp(b_k)=\supp(b^*_k)$ and since
  $b_k^*(b_k)= \sum_{j=1}^\infty \tilde\mu^{(k)}_j\tilde\lambda^{(k)}_j=1$ we can choose
  $i_k\in\{1,2\}$, so that
  \begin{equation*}
    \rho_k= \sum_{j\in N_{i_k}}\tilde\mu^{(k)}_j\tilde\lambda^{(k)}_j\ge \frac{1}{2}.
  \end{equation*}
  Then we let $E_k\keq\tilde E_k\cap N_{i_k}$, $\mu^{(k)}_j\keq\tilde\mu^{(k)}_j/\sqrt{\rho_k} $ and
  $\lambda^{(k)}_j\keq\tilde\lambda^{(k)}_j/\sqrt{\rho_k}$ for $j\kin E_k$. After player (I) has
  chosen $(\varepsilon^{(k)}_j)_{j\in E_k}$ we also put
  $x_k=\sum_{j\in E_k} \varepsilon^{(k)}_j\mu^{(k)}_j e_j\kin \sqrt{2\tilde C}B_X\cap [e_j: j\kge
  n_k]$ and
  $x^*_k=\sum_{j\in E_k} \varepsilon^{(k)}_j\lambda^{(k)}_j e^*_j\kin \sqrt{2\tilde
    C}B_{X^*}\cap[e^*_i:i\kge n_k]$.

  From the choice of $n_k$, and the fact that $\|x_k\|\le \sqrt{2\tilde C}$ and
  $\|x^*_k\|\le \sqrt{2\tilde C}$, it follows that $\dist(x_k,W_k)<\eta_k$ and
  $\dist(x^*,G_k)<\eta_k$.  From the $1$-unconditionality of $(e_j)$ it follows for
  $(\xi_k)\in c_{00}$ that
  \begin{equation*}
    \Big\|\sum_{k=1}^\infty \xi_k x_k\Big\|\le \sqrt 2\Big\|\sum_{k=1}^\infty \xi_k b_k\Big\|\le \sqrt2 \sqrt{\tilde C}\le \Big\|\sum_{k=1}^\infty \xi_k e_k\Big\|,
  \end{equation*}
  and
  \begin{equation*}
    \Big\|\sum_{k=1}^\infty \xi_k x^*_k\Big\|\le \sqrt 2\Big\|\sum_{k=1}^\infty \xi_k b^*_k\Big\|\le \sqrt2 \sqrt{\tilde C}\le \Big\|\sum_{k=1}^\infty \xi_k e^*_k\Big\|.
  \end{equation*}
  Thus, by Proposition \ref{P:1.3} below, $(x_k)$ is impartially $2\tilde C$-equivalent to $(e_k)$
  and $(e^*_k)$ is impartially $2\tilde C$-equivalent to $(e^*_k)$.

  Conversely, it is easy to deduce that if $(e_j)$ is unconditional and shrinking and strategically
  reproducible in the sense of Definition \ref{D:1.3}, then it is also strategically reproducible in
  the sense of Definition \ref{D:1.1}.

  In a similar way we can show that for an unconditional and normalized basis $(e_j)$ {\em
    strategical reproducibility } in sense of Definitions \ref{D:1.2} and \ref{D:1.3} are
  equivalent.
\end{remark}

\begin{remark}
  The unit vector basis of $\ell_1$ has the factorization property yet it is not strategically
  reproducible under any of the above definitions. It is possible to give a fourth notion of
  strategic reproducibility that covers $\ell_1$, is strictly less restrictive than Definition
  \ref{D:1.3}, and implies the factorization property. This formulation is rather technical, so we
  will not discuss it in the present paper.
\end{remark}

We will now show that a basis which is strategically reproducible has the factorization property.
We will first need the following two observations.
\begin{lem}\label{L:1.2}
  Assume that $X$ is a Banach space with a basis $(e_n)$, whose basis constant is $\lambda\ge1$ and
  biorthogonal functionals $(e_n^*)$.  Let $(b_n)$ and $(b^*_n)$ be block bases of $(e_n)$ and
  $(e^*_n)$, respectively, so that $b_m^*(b_n)=\delta_{m,n}$, for $m,n\in\N$, and so that for some
  $C\ge 1$ it follows that
  \begin{equation}\label{eq:basis-estimates:1}
    \Big\|\sum_{j=1}^\infty \xi_j b_j\Big\|_X
    \le \sqrt{C} \Big\|\sum_{j=1}^\infty \xi_j e_j\Big\|_X
    \qquad\text{and}\qquad
    \Big\|\sum_{j=1}^\infty \xi_j b^*_j\Big\|_{X^*}
    \le \sqrt{C} \Big\|\sum_{j=1}^\infty \xi_j e^*_j\Big\|_{X^*},
  \end{equation}
  for all $(\xi_j)\in c_{00}$.

  Then $Y=\overline{\spa(b_j:j\in\N)}$ is a complemented subspace of $X$ and
  \begin{equation*}
    P: X\to Y, \quad x\mapsto \sum_{n=1}^\infty b^*_n(x) b_n
  \end{equation*}
  is well defined and a bounded projection onto $Y$ with $\|P\|\le\lambda C$.  Moreover, if $(e_n)$
  is shrinking, then $\|P\|\le C$.
\end{lem}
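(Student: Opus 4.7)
The plan is to estimate the norms of the partial sums $P_N x := \sum_{n=1}^N b_n^*(x)\, b_n$ by combining the two inequalities in \eqref{eq:basis-estimates:1} with one duality step. First apply the $(b_n)$-inequality directly to get
\begin{equation*}
  \|P_N x\| \;\leq\; \sqrt{C}\,\Big\|\sum_{n=1}^N b_n^*(x)\,e_n\Big\|.
\end{equation*}
For any $y^*\in B_{X^*}$, use the bilinear reshuffle
\begin{equation*}
  y^*\Big(\sum_{n=1}^N b_n^*(x)\,e_n\Big) \;=\; \Big(\sum_{n=1}^N y^*(e_n)\,b_n^*\Big)(x),
\end{equation*}
then apply the $(b_n^*)$-inequality of \eqref{eq:basis-estimates:1} to the functional in brackets and invoke the basis-constant estimate $\|\sum_{n=1}^N y^*(e_n)\,e_n^*\| \leq \lambda\,\|y^*\|$. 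Taking suprema yields $\|P_N\| \leq \lambda C$ uniformly in $N$.

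Convergence of $(P_N x)_N$ is then standard: for $x$ in the dense subspace $\spa(e_k:k\in\N)$, the increasing block supports of $(b_n^*)$ force $b_n^*(x)=0$ for all large $n$, so $P_N x$ stabilizes; combined with the uniform bound, a density argument extends convergence to all $x \in X$ and produces a bounded $P$ with $\|P\|\leq \lambda C$. The identities $P b_n = b_n$ (immediate from $b_m^*(b_n)=\delta_{m,n}$) together with $P(X)\subseteq Y$ and continuity of $P$ give the projection property $P|_Y = \mathrm{id}_Y$.

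The main technical point is the sharper bound $\|P\|\leq C$ in the shrinking case. The factor $\lambda$ above came solely from the partial-sum estimate on $\sum y^*(e_n)\,e_n^*$; when $(e_n)$ is shrinking, $(e_n^*)$ is a basis of $X^*$ and the \emph{full} series $\sum_n y^*(e_n)\,e_n^*$ converges in norm to $y^*$, so applying \eqref{eq:basis-estimates:1} to tails shows that $\tilde y^* := \sum_n y^*(e_n)\,b_n^*$ is Cauchy and converges in $X^*$ with $\|\tilde y^*\| \leq \sqrt C\,\|y^*\|$. To transfer this to the primal side I introduce the auxiliary operator $Qx := \sum_n b_n^*(x)\,e_n$ on $X$; its convergence follows from the same density argument combined with the uniform partial-sum bound $\|\sum_{n=1}^N b_n^*(x)\,e_n\|\leq \sqrt C\,\lambda\,\|x\|$ already established. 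Passing to the limit in the reshuffle gives $y^*(Qx) = \tilde y^*(x)$, hence $\|Qx\|\leq \sqrt C\,\|x\|$, and one last application of the $(b_n)$-inequality of \eqref{eq:basis-estimates:1} closes the estimate:
\begin{equation*}
  \|Px\| \;=\; \lim_N \|P_N x\| \;\leq\; \sqrt C\,\lim_N\Big\|\sum_{n=1}^N b_n^*(x)\,e_n\Big\| \;=\; \sqrt C\,\|Qx\| \;\leq\; C\,\|x\|.
\end{equation*}
The only real obstacle is this last step: the bound on $Q$ relies on passing from the partial-sum inequality (where the basis constant $\lambda$ is unavoidable) to the full-series identity $y^* = \sum_n y^*(e_n)\,e_n^*$ in $X^*$ (where $\lambda$ disappears), which forces one to verify convergence of $Q$ in $X$ before dualizing.
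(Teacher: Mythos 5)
Your proof is correct and follows essentially the same route as the paper's: bound $\|\sum_n b_n^*(x)b_n\|$ by $\sqrt{C}\,\|\sum_n b_n^*(x)e_n\|$, transpose by duality, apply the second inequality of \eqref{eq:basis-estimates:1}, and observe that the factor $\lambda$ arises only from approximating a functional by elements of $\spa(e_j^*:j\in\N)$, which is lossless when $(e_n)$ is shrinking. Your auxiliary operator $Q$ is just a repackaging of the paper's norming inequality \eqref{E:1.2.1}, so there is nothing substantive to flag.
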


\begin{proof}
  If $(e_n)$ is shrinking then $\spa(e^*_n:n\in\N)$ is norm dense in $X^*$ and therefore we have in
  that case
  \begin{equation*}
    \|x\|=\sup_{x^*\in\spa(e^*_n:n\in\N)} x^*(x).
  \end{equation*}
  If $(e_n)$ is a general basis whose basis constant is $\lambda$, we denote by $P_n$ the projection
  $P_n:X\to X$, $\sum_{j=1}^\infty x_j e_j \mapsto \sum_{j=1}^n x_j e_j$, and since
  $\|P_n\|=\|P^*_n\|\le \lambda$ we deduce for $x\in X$
  \begin{equation}\label{E:1.2.1}
    \begin{aligned}
      \|x\|&=\lim_{n\to\infty}\sup_{x^*\in B_{X^*}} x^*(P_n(x))
      \le \sup_{n\in\N, x^*\in B_{X^*}} {P^*_n(x^*)}(x)\\
      &\le \sup_{z^*\in \lambda B_{X^*}\cap\spa(e^*_j:j\in\N)} z^*(x) =\lambda \sup_{z^*\in
        B_{X^*}\cap\spa(e^*_j:j\in\N) } z^*(x).
    \end{aligned}
  \end{equation}

  If $x\in \spa(e_j)$ then $P(x)$ is a finite linear combination of elements of $(b_n)$.

  We compute:
  \begin{align*}
    \sup_{x\in B_X\cap\spa(e_j:j\in\N)} \|P(x)\|
    &=\sup_{x\in B_X\cap\spa(e_j:j\in\N)}\Big\|\sum_{j=1}^\infty b^*_j(x) b_j\Big\|\\
    &\le \sqrt{C}\sup_{x\in B_X\cap\spa(e_j:j\in\N)}\Big\|\sum_{j=1}^\infty b^*_j(x) e_j\Big\|.
  \end{align*}
  Using~\eqref{E:1.2.1} yields
  \begin{align*}
    \sup_{x\in B_X\cap\spa(e_j:j\in\N)} \|P(x)\|
    &\leq \lambda  \sqrt{C}\sup_{\substack{x\in B_X\cap\spa(e_j:j\in\N)\\x^*\in B_{X^*}\cap\spa(e^*_j:j\in\N)}}
    x^*\Big(\sum_{j=1}^\infty b^*_j(x) e_j\Big)\\
    &= \lambda\sqrt{C}\sup_{x^*\in B_{X^*}\cap\spa(e^*_j:j\in\N)} \Big\|\sum_{j=1}^\infty x^*(e_j)b_j^*\Big\|.
  \end{align*}
  By~\eqref{eq:basis-estimates:1}, we obtain therefore
  \begin{equation*}
    \sup_{x\in B_X\cap\spa(e_j:j\in\N)} \|P(x)\|
    \leq \lambda \sqrt{C}\sup_{(\xi_j)\in c_{00}, \|\sum \xi_j e_j^*\|\le 1}
    \Big\|\sum_{j=1}^\infty\xi_j b^*_j \Big\|
    \le\lambda C.
  \end{equation*}
  In the case that $(e_j)$ is shrinking we can replace in the first inequality $\lambda$ by $1$, and
  therefore obtain that $\|P(x)\|\le C$ for $x\in B_X$.
\end{proof}

\begin{prop}\label{P:1.3}
  Assume that $X$ is a Banach space with a basis $(e_n)$ and biorthogonal functionals $(e_n^*)$.
  Let $(b_n)$ and $(b^*_n)$ be block bases of $(e_n)$ and $(e^*_n)$, respectively, so that
  $b_m^*(b_n)=\delta_{m,n}$, for $m,n\in\N$, and so that for some $C\ge 1$ it follows that
  \begin{equation}\label{eq:basis-estimates:2}
    \Big\|\sum_{j=1}^\infty \xi_j b_j\Big\|_X
    \le \sqrt{C}\Big\|\sum_{j=1}^\infty \xi_j e_j\Big\|_X
    \qquad\text{and}\qquad
    \Big\|\sum_{j=1}^\infty \xi_j b^*_j\Big\|_{X^*}
    \le \sqrt{C} \Big\|\sum_{j=1}^\infty \xi_j e^*_j\Big\|_{X^*},
  \end{equation}
  for all $(\xi_j)\in c_{00}$.

  Then $(b_n)$ is $\lambda C$-impartially equivalent to $(e_n)$ and $(b^*_n)$ is $C$-impartially
  equivalent to $(e_n)$. If $(e_n)_n$ is Shrinking then $(b_n)$ is $C$-impartially equivalent to
  $(e_n)$.
\end{prop}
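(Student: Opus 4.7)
The plan is to deduce both impartial equivalences by a duality argument that mirrors the calculation in the proof of Lemma~\ref{L:1.2}. The upper bounds appearing in both conclusions are already supplied by the hypothesis~\eqref{eq:basis-estimates:2}; only the matching lower bounds need to be produced, and in each case the mechanism is the same: transfer the relevant pairing between $(b_j)$ and $(e_j)$ (or their duals) by means of the biorthogonality $b_k^*(b_j) = \delta_{jk}$.

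For $(b_n^*)$ versus $(e_n^*)$, fix $(\xi_j) \in c_{00}$ and realize $\|\sum_j \xi_j e_j^*\|$ as the supremum of $\sum_j \xi_j \alpha_j$ over $y \in B_X$, where $\alpha_j := e_j^*(y)$. For each such $y$, I form the lifted vector $\tilde y := \sum_j \alpha_j b_j$; the series converges in $X$ because the partial sums $\sum_{j=1}^N \alpha_j e_j$ converge to $y$ and the first inequality in \eqref{eq:basis-estimates:2} turns their tails into a Cauchy sequence, with limit satisfying $\|\tilde y\| \le \sqrt{C}\|y\|$. Biorthogonality then gives $\sum_j \xi_j \alpha_j = (\sum_j \xi_j b_j^*)(\tilde y)$, bounded by $\sqrt{C}\|\sum_j \xi_j b_j^*\|$ for $y\in B_X$. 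Taking the supremum over $y$ produces the lower bound $\|\sum_j \xi_j e_j^*\| \le \sqrt{C}\|\sum_j \xi_j b_j^*\|$, with no basis-constant factor, which combined with the given upper bound yields the $C$-impartial equivalence.

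For $(b_n)$ versus $(e_n)$, I would reverse the roles of vectors and functionals. Apply the duality inequality~\eqref{E:1.2.1} from the proof of Lemma~\ref{L:1.2} to $x := \sum_j \xi_j e_j$, bounding $\|x\|$ by $\lambda$ times the supremum over $z^* = \sum_k \eta_k e_k^* \in B_{X^*} \cap \spa(e_k^*)$. For each such $z^*$ I construct the lifted functional $w^* := \sum_k \eta_k b_k^*$, which is a finite sum of norm at most $\sqrt{C}$ by the second inequality of~\eqref{eq:basis-estimates:2} and which, by biorthogonality, satisfies $w^*(\sum_j \xi_j b_j) = \sum_j \xi_j \eta_j = z^*(x)$. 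This yields $\|\sum_j \xi_j e_j\| \le \lambda\sqrt{C}\|\sum_j \xi_j b_j\|$, and combined with the given upper bound it provides the claimed equivalence. When $(e_j)$ is shrinking, \eqref{E:1.2.1} holds with $\lambda$ replaced by $1$, and so the same argument delivers the refined $C$-impartial equivalence.

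The only genuine subtlety, and therefore the main obstacle, is the asymmetry between the two halves of the proof: the primal series $\sum_j \alpha_j b_j$ always converges in $X$ because basis expansions converge in norm, whereas the dual series $\sum_k \eta_k b_k^*$ need not converge in $X^*$ when $(e_j)$ is not shrinking. This forces the detour through the truncated supremum of~\eqref{E:1.2.1} in the primal equivalence and is what produces the basis-constant factor $\lambda$; the shrinking hypothesis is precisely what is needed to eliminate it.
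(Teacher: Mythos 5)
Your proof is correct and follows essentially the same route as the paper's own argument. Both halves hinge on the same two ingredients — the biorthogonality $b_k^*(b_j)=\delta_{jk}$ to transport pairings between the $(b_j,b_j^*)$ system and the $(e_j,e_j^*)$ system, and the truncation inequality~\eqref{E:1.2.1} (with the shrinking case replacing $\lambda$ by $1$) — and the paper computes the supremum over finitely supported $(\eta_j)$ with $\|\sum\eta_j b_j^*\|\le1$, relaxes the constraint via the hypothesis~\eqref{eq:basis-estimates:2}, and then invokes~\eqref{E:1.2.1}, which is the same calculation you do in the opposite presentation order; the dual-side argument that yields the constant $C$ with no $\lambda$ is handled by the paper with a one-word ``Similarly,'' and your lifting of $y\mapsto\tilde y$ is a legitimate way to spell that out (the paper's implicit version would instead use density of $B_X\cap\spa(e_j)$ in $B_X$, avoiding the convergence argument for $\sum_j\alpha_j b_j$, but both work).

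One small remark: in your closing paragraph you suggest the primal side is forced through~\eqref{E:1.2.1} because an infinite series $\sum_k\eta_k b_k^*$ might not converge. In fact the supremum in~\eqref{E:1.2.1} is already over finitely supported $z^*\in\spa(e_k^*)$, so there is never an infinite dual series to worry about; the real reason $\lambda$ appears is simply that $B_{X^*}\cap\spa(e_k^*)$ need not be norming for $X$ when the basis is not shrinking, which is exactly what~\eqref{E:1.2.1} quantifies. This does not affect the correctness of the proof, only the intuition offered at the end.
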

\begin{proof}
  For a sequence $(\xi_j)\in c_{00}$ we compute
  \begin{align*}
    \Big\|\sum_{j=1}^\infty \xi_j b_j\Big\|
    &\ge \sup_{(\eta_j)\in c_{00},\|\sum_{j=1}^\infty \eta_j b^*_j\|\le 1}\sum_{j=1}^\infty \eta_j b^*_j
      \Big(\sum_{j=1}^\infty \xi_j b_j\Big)\\
    &= \sup_{(\eta_j)\in c_{00},\|\sum_{j=1}^\infty \eta_j b^*_j\|\le 1}\sum_{j=1}^\infty \xi_j\cdot \eta_j.
  \end{align*}
  By~\eqref{eq:basis-estimates:2} and then \eqref{E:1.2.1} we obtain
  \begin{align*}
    \Big\|\sum_{j=1}^\infty \xi_j b_j\Big\|
    &\ge \sup_{(\eta_j)\in c_{00},\|\sum_{j=1}^\infty \eta_j e^*_j\|\le 1/\sqrt{C}}\sum_{j=1}^\infty \xi_j\cdot \eta_j\\
    &=\frac1{\sqrt{C}}\sup_{(\eta_j)\in c_{00},\|\sum_{j=1}^\infty \eta_j e^*_j\|\le 1}\sum_{j=1}^\infty \xi_j\cdot \eta_j
      =\geq\frac1{\lambda \sqrt{C}}\Big\|\sum_{j=1}^\infty \xi_j e_j\Big\|.
  \end{align*}
  Similarly we show that
  \begin{align*}
    \Big\|\sum_{j=1}^\infty \xi_j b^*_j\Big\|
    &\ge\frac1{\sqrt C}\Big\|\sum_{j=1}^\infty \xi_j e^*_j\Big\|.
      \qedhere
  \end{align*}
\end{proof}

In order to deduce that strategical reproducibility implies the factorization property, we will also
need a condition on diagonal operators which is automatically satisfied in the case that the given
basis is unconditional.

\begin{defin}
  \label{diagonal rip}
  Let $X$ be a Banach space with a normalized Schauder basis $(e_n)_n$. We say that the basis
  $(e_n)_n$ has the {\em uniform diagonal factorization property} if for every $\delta>0$ there
  exists $K(\delta)\geq1$ so that for every bounded diagonal operator $T:X\to X$ with
  $\inf_n \big|e_n^*(T(e_n))\big| \geq \delta$ the identity almost $K(\delta)$-factors through
  $T$. If we wish to be more specific we shall say that $(e_n)_n$ has the {\em $K(\delta)$-diagonal
    factorization property}.
\end{defin}

Note that if $(e_n)$ is unconditional then it has the uniform diagonal factorization property. The
following definition quantifies the uniform factorization property.

\begin{defin}
  \label{rip}
  Let $X$ be a Banach space with a normalized Schauder basis $(e_n)_n$. We say that the basis
  $(e_n)_n$ has the {\em uniform factorization property} if for every $\delta>0$ there exists
  $K(\delta)\geq 1$ so that for every bounded linear operator $T:X\to X$ with
  $\inf_n \big|e_n^*(T(e_n))\big| \geq \delta$ the identity almost $K(\delta)$-factors through
  $T$. If we wish to be more specific we shall say that $(e_n)_n$ has the {\em
    $K(\delta)$-factorization property}.
\end{defin}

\begin{remark}
  Note that in Definitions \ref{diagonal rip} and \ref{rip} $K(\delta)\geq1/\delta$. This can be
  witnessed by taking $T = \delta I$. Also whenever the function $K:(0,\infty)\to \mathbb{R}$ is
  well defined it is also continuous. In fact, for $0<\varepsilon<\delta$ if a simple scaling
  argument yields
  \begin{equation}
    \label{k-ontinuous}
    K(\delta) \leq K(\delta - \varepsilon) \leq \frac{\delta}{\delta - \varepsilon}K(\delta).
  \end{equation}
  To see this, use the following trick: if $T$ has a diagonal whose elements are absolutey bounded
  below by $\delta - \varepsilon$ then the elements of the diagonal of
  $\delta/(\delta - \varepsilon)T$ are absolutey bounded below by $\delta$. Inspecting
  \eqref{k-ontinuous} we also deduce $K(\delta)\leq K(1)/\delta$ and therefore
  \[\frac{1}{\delta}\leq K(\delta) \leq \frac{K(1)}{\delta},\]
  for all $\delta>0$.
\end{remark}

\begin{thm}\label{thm:strat-rep:1}
  \label{fun and games factor through one's identity}
  Let $X$ be a Banach space with a Schauder basis $(e_n)_n$ that has a basis constant
  $\lambda$. Assume also that
  \begin{itemize}

  \item[(i)] the basis $(e_i)_i$ has the $K(\delta)$-diagonal factorization property and

  \item[(ii)] the basis $(e_i)_i$ is $C$-strategically reproducible in $X$.

  \end{itemize}
  Then $(e_i)_i$ has the $ \lambda C^{2}K(\delta)$-factorization property.
\end{thm}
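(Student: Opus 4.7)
\emph{Overall strategy.} My plan is to play the role of Player~(I) in the reproducibility game $\mathrm{Rep}_{(X,(e_i))}(C+\eta,\eta)$, choosing the game parameters---partition, cofinite subspaces $W_k, G_k$, tolerances $\eta_k$, and signs---so that Player~(II)'s winning response produces sequences $(x_k)\subset X$ and $(x_k^*)\subset X^*$ along which $T$ behaves essentially as a diagonal operator with large diagonal. Hypothesis (i) will then yield the desired factorization after a small Neumann-series perturbation.

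\emph{Game play.} Fix $T:X\to X$ with $\delta := \inf_k|e_k^*(Te_k)|>0$, and let $\eta,\vp\in(0,1)$. Before the first turn I partition $\N = N_1\sqcup N_2$ with $N_j = \{i : (-1)^{j+1}e_i^*(Te_i)\ge\delta\}$, so that $e_i^*(Te_i)$ has constant sign on each $N_j$. At turn $k$, given $(x_j, x_j^*)_{j<k}$, I take
\[
W_k = \bigcap_{j<k}\bigl(\ker x_j^* \cap \ker(T^*x_j^*)\bigr)\in\cof(X),\qquad G_k = \spa\{x_j,Tx_j : j<k\}^{\perp}\in\cof_{w^*}(X^*),
\]
and $\eta_k>0$ decaying fast enough (e.g.\ $\eta_k\le\vp\cdot 2^{-k}(1+\|T\|\max_{j<k}(\|x_j\|+\|x_j^*\|))^{-1}$) to control later estimates. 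After Player~(II) returns $i_k$, $E_k\subset N_{i_k}$ and $(\lambda_i^k), (\mu_i^k)$, I pick signs $(\vp_i^{(k)})$ achieving $|x_k^*(Tx_k)|\ge\delta(1-\eta)$. This is possible because the bilinear quadratic form $x_k^*(Tx_k)=\sum_{i,j\in E_k}\vp_i\vp_j\mu_i^k\lambda_j^k e_i^*(Te_j)$ has average over uniform $\pm1$-signs equal to $\sum_{i\in E_k}\mu_i^k\lambda_i^k e_i^*(Te_i)$, whose absolute value is at least $\delta(1-\eta)$ by sign-consistency on $N_{i_k}$; so some specific sign pattern attains at least this average.

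\emph{Operator construction and perturbation.} Player~(II)'s strategy yields $(x_k)\sim_{C+\eta}(e_k)$ and $(x_k^*)\sim_{C+\eta}(e_k^*)$ impartially, plus $\dist(x_k,W_k), \dist(x_k^*,G_k)<\eta_k$. After a tiny rescaling making $x_k^*(x_k)=1$ exactly, I set $U:X\to X$ with $Ue_k=x_k$ (so $\|U\|\le\sqrt{C+\eta}$); define $P:X\to Y:=\overline{\spa(x_k:k\in\N)}$ by $Pz=\sum_k x_k^*(z)x_k$ (bounded by Lemma~\ref{L:1.2} with $\|P\|\le\lambda(C+\eta)$); and let $V:Y\to X$ be $V(x_k)=e_k$ ($\|V\|\le\sqrt{C+\eta}$). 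Then $A := VPTU$ has matrix $A_{kn} = x_k^*(Tx_n)$ in $(e_k)$, decomposing as $A = D' + N'$ with $D'e_n := x_n^*(Tx_n)e_n$ diagonal and $\inf_n|D'_{nn}|\ge\delta(1-\eta)$. The containments $W_k\subset\ker(T^*x_j^*)$ and $G_k\subset\{Tx_j\}^\perp$ force $|x_k^*(Tx_n)|\le\eta_{\max(k,n)}\cdot\|T\|\cdot 2\lambda\sqrt{C+\eta}$ for $k\neq n$; with $\eta_k$ decaying rapidly, I can make $\sum_{k\neq n}|x_k^*(Tx_n)|\le\rho_0\cdot 2^{-n}$ for any prescribed $\rho_0>0$. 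The bound $\|\sum c_k e_k\|\le\sum|c_k|$ (normalized basis) combined with $|e_n^*(z)|\le 2\lambda\|z\|$ then gives $\|N'\|\le 4\lambda\rho_0$.

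\emph{Conclusion and main obstacle.} Hypothesis (i) furnishes $R_0,S_0$ with $\|S_0\|\|R_0\|\le K(\delta(1-\eta))+\vp$ and $I = S_0 D' R_0$. For $\rho_0$ small enough, $\|S_0N'R_0\|<\vp$, so $I + S_0 N' R_0$ is invertible via Neumann series and I obtain
\[
I = \bigl((I+S_0 N' R_0)^{-1}S_0 VP\bigr)\cdot T\cdot(UR_0),
\]
factoring $I$ through $T$ with norm product at most $(1+2\vp)\lambda(C+\eta)^2(K(\delta(1-\eta))+\vp)$. Letting $\vp,\eta,\rho_0\downarrow 0$ gives the $\lambda C^2 K(\delta)$-factorization. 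The principal obstacle is the uniform lower bound on $|x_k^*(Tx_k)|$: the off-diagonal contribution to this bilinear quadratic form could in principle cancel the diagonal, but the partition of $\N$ according to the sign of $e_i^*(Te_i)$ together with the sign-maximization at step~3 handles this via a random-sign averaging argument. A secondary---purely technical---point is the operator-norm smallness of $N'$, which the rapid decay of the tolerances $\eta_k$ and the double-summability of the off-diagonal entries secure.
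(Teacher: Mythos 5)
Your proposal follows essentially the same route as the paper's proof: the same partition of $\N$ by the sign of $e_i^*(Te_i)$, the same Rademacher-averaging choice of signs to force $|x_k^*(Tx_k)|\ge(1-\eta)\delta$, the same use of the cofinite subspaces $W_k$, $G_k$ (with rapidly decaying $\eta_k$) to make the off-diagonal entries $x_k^*(Tx_n)$ absolutely summable, and the same endgame of applying the diagonal factorization property to $D'$ and absorbing $N'$ by a Neumann series. You have simply inlined the content of Lemma~\ref{what you need to factor} rather than quoting it, and your explicit rescaling to achieve $x_k^*(x_k)=1$ is a clean way to handle the $(1\pm\eta)$ slack in Definition~\ref{D:1.3}.

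There is one technical omission. You bound $\|P\|\le\lambda(C+\eta)$ for $Pz=\sum_k x_k^*(z)x_k$ by citing Lemma~\ref{L:1.2}, but that lemma assumes $(b_n)$ and $(b_n^*)$ are \emph{block bases} of $(e_n)$ and $(e_n^*)$; without some form of blockness the series defining $P$ need not even converge on $\spa(e_j:j\in\N)$, since infinitely many $x_k^*$ could charge a fixed coordinate. Your choice of $W_k$ and $G_k$ annihilates only $x_j^*,T^*x_j^*$ and $x_j,Tx_j$ for $j<k$, which does not force the supports $E_k$ to move to the right. The paper's strategy for Player~(I) additionally places $e_1,\dots,e_{l_k}$, $T(e_1),\dots,T(e_{l_k})$ into the set defining $G_k$ and $e_1^*,\dots,e_{l_k}^*$, $T^*(e_1^*),\dots,T^*(e_{l_k}^*)$ into the set defining $W_k$, where $l_k=\max\bigcup_{j<k}E_j$; combined with $\dist(x_k^*,G_k)<\eta_k$ and the fast decay of $\eta_k$ this makes $(x_k)$ and $(x_k^*)$ summably close to block sequences (hypothesis~(iii) of Lemma~\ref{what you need to factor}), which is exactly what legitimizes the appeal to Lemma~\ref{L:1.2}. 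The fix costs nothing — just enlarge your finite sets accordingly — but as written the boundedness of $P$ is not justified.
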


\begin{remark}
  It is worth pointing out that in Definition \ref{rip} the norm of $T$ does not appear and the
  factorization constant of the identity through $T$ depends only on the diagonal of $T$. This means
  that having the uniform factorization property is formally stronger than having the factorization
  property. Theorem \ref{thm:strat-rep:1} yields the stronger property. It is unclear whether these
  two properties are actually distinct.
\end{remark}

We postpone the proof of Theorem \ref{fun and games factor through one's identity} to first present
a necessary lemma.

\begin{lem}
  \label{what you need to factor}
  Let $X$ be a Banach space with a normalized Schauder basis $(e_n)_n$ with a basis constant
  $\lambda$, let $T:X\to X$ be a bounded linear operator, let $(x_n)_n$, $(x_n^*)_n$ be sequences in
  $X$ and $X^*$ respectively and let $\eta >0$, $C\geq 1$. Assume that the following are satisfied:
  \begin{itemize}

  \item[(i)] $(x_n)_n$ and $(e_n)_n$ are impartially $C$-equivalent,
  \item[(ii)] $(x^*_n)_n$ and $(e^*_n)_n$ are impartially $C$-equivalent,
  \item[(iii)] there exists a block sequence $(\tilde x_n^*)_n$ of $(e_n^*)$ so that
    $\sum_n\|x_n^*-\tilde x^*_n\| <\infty$,
  \item[(iv)] $\sum_n\sum_{m\neq n}|x^*_n(T(x_m))| < \eta$.
  \end{itemize}
  Then the diagonal operator $D:X\to X$ given by $D(e_n) = x_n^*(T(x_n))e_n$ is bounded and there
  exist bounded linear operators $B,A:X\to X$ with $\|A\|\|B\| \leq \lambda C^2$ so that
  $\|D - BTA\| < 2\lambda\eta$.

  If we additionally assume that $K\geq 1$ is such that the identity $K$-factors through $D$ and
  $\eta < 1/(2\lambda K)$ then the identity $\Big(\frac{KC^{2}}{1-2\lambda K\eta}\Big)$-factors
  through $T$.
\end{lem}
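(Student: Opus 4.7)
My plan is to construct bounded operators $A, B : X \to X$ for which $BTA$ approximates $D$ up to norm less than $2\lambda \eta$, and then to bootstrap a factorization of $I$ through $D$ to one through $T$ via a standard Neumann-series perturbation argument.

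First I would set $A(e_n) := x_n$ and extend linearly; hypothesis (i) gives $\|A\| \leq \sqrt{C}$ at once. Constructing $B$ is more delicate. The naive choice $B(y) := \sum_n x_n^*(y) e_n$ would yield $BTA(e_n) = x_n^*(T(x_n)) e_n + \sum_{m \neq n} x_m^*(T(x_n)) e_m$, with the off-diagonal part controlled by (iv); but convergence of this series for arbitrary $y \in X$ is not automatic. The fix is to use (iii): the block sequence $(\tilde x_n^*)$ of $(e_n^*)$ admits a biorthogonal block sequence $(\tilde x_n)$ of $(e_n)$, and applying Lemma~\ref{L:1.2} to $(\tilde x_n), (\tilde x_n^*)$ produces a bounded projection $P(y) = \sum_n \tilde x_n^*(y) \tilde x_n$. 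Composing $P$ with the bounded map $\tilde x_n \mapsto e_n$ (bounded by the equivalences obtained from (i), (ii) together with the summable perturbation in (iii)) yields $B$ with $\|A\|\|B\| \leq \lambda C^2$.

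Next I would compute
\begin{align*}
(BTA - D)(e_n) &= (\tilde x_n^* - x_n^*)(T(x_n))\, e_n + \sum_{m \neq n} \tilde x_m^*(T(x_n))\, e_m
\end{align*}
and estimate $\|D - BTA\|$ by acting on $y = \sum_n a_n e_n \in B_X$, using that $|a_n| \leq 2\lambda$ by the basis-constant bound on the coordinate functionals. Splitting $\tilde x_m^* = x_m^* + (\tilde x_m^* - x_m^*)$, the dominant off-diagonal contribution is at most $\sup_n |a_n| \cdot \sum_n \sum_{m \neq n} |x_n^*(T(x_m))| < 2\lambda \eta$ by (iv); the residual perturbation terms, governed by $\sum_n \|\tilde x_n^* - x_n^*\| < \infty$ from (iii), can be made arbitrarily small by choosing $(\tilde x_n^*)$ close enough to $(x_n^*)$. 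This yields $\|D - BTA\| < 2\lambda \eta$, with boundedness of $D$ following from $D = BTA - (BTA - D)$.

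For the second assertion, assume $I = S' D R'$ with $\|S'\|\|R'\| \leq K$ and $\eta < 1/(2\lambda K)$. Writing $D = BTA + E$ with $\|E\| < 2\lambda \eta$, we have $I = S' B T A R' + S' E R'$. Setting $E' := S' E R'$ we get $\|E'\| \leq 2\lambda K \eta < 1$, so $I - E' = S' B T A R'$ is invertible via the Neumann series with $\|(I - E')^{-1}\| \leq (1 - 2\lambda K \eta)^{-1}$. Letting $S := (I - E')^{-1} S' B$ and $R := A R'$ gives $I = S T R$ with $\|S\| \|R\| \leq \tfrac{K C^2}{1 - 2\lambda K \eta}$, matching the stated bound. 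The main obstacle is the rigorous construction of $B$: one must route through the block companion $(\tilde x_n^*)$ via Lemma~\ref{L:1.2} so that $B$ extends to a bounded operator on all of $X$, while simultaneously controlling the perturbation $\tilde x_n^* - x_n^*$ tightly enough that it does not corrupt the clean diagonal approximation furnished by (iv).
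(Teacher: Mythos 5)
Your overall architecture (build $A$ sending $e_n\mapsto x_n$, build $B$ roughly sending $y\mapsto\sum_n x_n^*(y)e_n$, read off $D\approx BTA$, then bootstrap via a Neumann series) is the right one, and your Neumann-series finish is correct. But the way you construct $B$ introduces a gap that breaks the error estimate.

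You correctly observe that the ``naive'' $B(y)=\sum_n x_n^*(y)e_n$ gives $BTA(e_m)=\sum_n x_n^*(T(x_m))e_n$, whose off-diagonal part is controlled exactly by~(iv). The paper in fact does use this $B$: it sets $R(x)=\sum_n x_n^*(x)x_n$, appeals to the proof of Lemma~\ref{L:1.2} (with (iii) used to justify convergence, since $(x_n^*)$ is a summable perturbation of a block sequence) for $\|R\|\le\lambda C$, and composes with $x_n\mapsto e_n$ to get $B$. Then for $y=\sum_m a_m e_m\in B_X$, triangle inequality plus $|a_m|\le 2\lambda$ gives $\|(D-BTA)(y)\|\le 2\lambda\sum_m\sum_{n\ne m}|x_n^*(T(x_m))|<2\lambda\eta$, and the bound is exactly~(iv).

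Instead you abandon this $B$ and replace $x_n^*$ by the block approximant $\tilde x_n^*$, so $B(y)=\sum_n\tilde x_n^*(y)e_n$ and $(BTA-D)(e_n)=(\tilde x_n^*-x_n^*)(T(x_n))e_n+\sum_{m\ne n}\tilde x_m^*(T(x_n))e_m$. After splitting $\tilde x_m^*=x_m^*+(\tilde x_m^*-x_m^*)$ you are left with the residual double sum $\sum_n\sum_{m\ne n}|(\tilde x_m^*-x_m^*)(T(x_n))|$, and this is not controlled: for any fixed $m$ with $\tilde x_m^*\ne x_m^*$, the inner sum over $n$ has $|(\tilde x_m^*-x_m^*)(T(x_n))|$ bounded above only by $\|\tilde x_m^*-x_m^*\|\,\|T\|\,\|x_n\|$, which is bounded away from $0$ in general, so the inner sum over $n$ is typically infinite. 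Hypothesis~(iii) gives you one block sequence with \emph{finite} $\sum_n\|\tilde x_n^*-x_n^*\|$; it does not let you ``choose $(\tilde x_n^*)$ close enough'' (and even if it did, the divergence above would persist for any nonzero perturbation). Likewise your claim that the bounded map $\tilde x_n\mapsto e_n$ and the applicability of Lemma~\ref{L:1.2} to a biorthogonal $(\tilde x_n)$ follow from (i)--(iii) would need an actual argument.

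The fix is exactly what you initially flagged and then walked away from: keep $B(y)=\sum_n x_n^*(y)e_n$, and use~(iii) only to prove that this $B$ (equivalently $R$) is bounded — decompose $x_n^*=\tilde x_n^*+(x_n^*-\tilde x_n^*)$, run the Lemma~\ref{L:1.2} estimate on the block part, and absorb the absolutely summable tail. Then the off-diagonal error is governed verbatim by~(iv) and the stated bound $\|D-BTA\|<2\lambda\eta$ follows.
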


\begin{proof}
  The maps $A:X\to X$, $S:[x_n:n\kin\N]\to X$ with $A(e_n) = x_n$, $S(x_n) = e_n$ are well defined
  and satisfy $\|A\|\|S\| \leq C$.
  
  From Lemma \ref{L:1.2} it follows that the map $R: X\to [x_n:n\in\N]$ given by
  $R(x) = \sum_{n=1}^\infty x_n^*(x)x_n$ is well defined and $\|R\| \leq \lambda C$.
  
  Define $B = S\circ R:X\to X$. Then $\|B\circ A\| \leq \lambda C^{2}$. It also follows that for
  each $m\in\N$ we have $BTA(e_m) = \sum_{n=1}^\infty x_n^*(T(x_m))e_n$. By (iv) we deduce that for
  each $m\in\N$ we have
  \begin{equation*}
    \|BTA(e_m) - x_m^*(T(x_m))e_m\| \leq \sum_{n\neq m}\big|x_n^*(T(x_m))\big|.
  \end{equation*}
  Combining this with the triangle inequality we obtain that the desired diagonal map $D$ is bounded
  and $\|D - BTA\| < 2\lambda\eta$.

  For the additional part, assume that $\hat B:X\to X$ and $\hat A:X\to X$ are such that
  $\|\hat B\|\|\hat A\| \leq K$ and $I = \hat BD\hat A$. It follows that
  $\|I - \hat BBTA\hat A\|= \|\hat B(D-BTA)\hat A\| < 2\lambda K\eta < 1$. Hence, the map
  $Q = \hat BBTA\hat A$ is invertible with $\|Q^{-1}\| \leq 1/(1 - 2\lambda K\eta)$. In conclusion,
  if we set $\tilde B = Q^{-1}\hat BB$, $\tilde A = A\hat A$ then $\tilde BT\tilde A = I$ and
  $\|\tilde B\|\|\tilde A\| \leq \lambda K C^{2}/(1-2\lambda K\eta)$.
\end{proof}

\begin{proof}[Proof of Theorem \ref{fun and games factor through one's identity}]
  Let $\delta>0$, $T:X\to X$ be a bounded linear operator and
  $\inf_n \big|e_n^*(T(e_n))\big| \geq \delta$. Let us fix $\eta>0$ to be determined later.

  We will now describe a strategy for player (I) in a game $\mathrm{Rep}_{(X,(e_i))}(C,\eta)$, and
  assume player (II) answers by following his winning strategy.

  At the beginning, as player (I) chooses $N_1 = \{n\in\N:e_n^*(T(e_n)) \geq \delta\}$ and
  $N_2 = \{n\in\N: e_n^*(T(e_n)) \leq -\delta\}$. In the first step of the $n$'th turn he chooses
  $\eta_n < \eta(\|T\|n2^n\sqrt{C+\eta})^{-1}$ and if $l_n = \max_{1\leq k<n}(E_k)$ chooses
  $G_n = A_n^\perp$ and $W_n = (B_n)_\perp$, where
  \begin{equation*}
    \begin{split}
      A_n = \big\{ x_1,T(x_1),\ldots,x_{n-1},T(x_{n-1}),e_1, T(e_1),\ldots,e_{l_n}, T(e_{l_n})
      \big\},\\
      B_n = \big\{ x_1^*,T^*(x_1),\ldots,x_n^*,T^*(x_n^*),e^*_1,T^*(e^*_1),\ldots,e^*_{l_n},
      T^*(e^*_{l_n}) \big\}.
    \end{split}
  \end{equation*}
  Player (II), following a winning strategy, chooses $i_n = 1$ or $i_n = 2$, picks
  $E_n\subset N_{i_n}$, and non-negative scalars $(\lambda_i^{(n)})_{i\in E_n}$,
  $(\mu_i^{(n)})_{i\in E_n}$ with
  \begin{equation*}
    1-\eta
    < \sum_{i\in E_n}\lambda_i^{(n)}\mu_i^{(n)}
    < 1+\eta.
  \end{equation*}
  Then player (I), pick signs $(\varepsilon_i^{(n)})_{i\in E_n}$ so that if
  $x_n^* = \sum_{i\in E_n}\mu_i^{(n)}\varepsilon_i^{(n)}e_i^*$ then
  \begin{equation}
    \label{probabilities and stuff}
    \Big|x_n^*\Big(T\Big(\sum_{i\in E_n}\varepsilon_i^{(n)}\lambda_i^{(n)}e_i\Big)\Big)\Big| > (1-\eta)\delta.
  \end{equation}
  That this is possible follows using the following probabilistic argument:

  Let $r\keq(r_j)_{j\in E_n}$ be a Rademacher sequence, {\it i.e.}  $r_j$, $j\kin E_n$, are
  independent random variables on some probability space $(\Omega,\Sigma,\P)$, with
  $\P(r_j=1)\keq\P(r_j=-1)\keq\frac12$.
  \begin{align*}
    \E\Bigg(\Big(\sum_{i\in E_n} r_i \mu^{(n)}_i e^*_i\Big)
    \Big(T\Big(\sum_{j\in E_n} r_j\lambda^{(n)}_j e_j\Big)\Big)\Bigg)
    &=\E\Big( \sum_{i,j\in E_n} r_i r_j  \mu^{(n)}_i\lambda^{(n)}_j e^*_j(T(e_i))\Big)\notag\\
    & =\sum_{i\in E_n} \mu^{(n)}_i\lambda^{(n)}_ie^*_i(T(e_i))> \delta(1-\eta).\notag 
  \end{align*}
  It follows therefore that we can choose $(\vp^{(n)}_j)_{j\in E_n}$ appropriately to satisfy
  \eqref{probabilities and stuff}.

  After the game is completed, put
  \begin{equation*}
    x_n = \sum_{i\in E_n}\varepsilon_i^{(n)}\lambda_i^{(n)}e_i
    \qquad\text{and}\qquad
    x_n^* = \sum_{i\in E_n}\varepsilon_i^{(n)}\mu_i^{(n)}e_i^*.
  \end{equation*}
  Conditions (i) to (iv) of Definition \ref{D:1.3} are satisfied. Then \eqref{probabilities and
    stuff} can be rewritten as
  \begin{equation}
    \label{diagonal big and mighty}
    |x^*_n(T(x_n))| \geq (1-\eta)\delta.
  \end{equation}
  Furthermore, observe that for any $k<n$ we have
  \begin{equation*}
    \begin{split}
      |x_k^*(T(x_n))| &= |T^*(x_k^*)(x_n)|
      \leq \|T^*(x_k^*)\| \cdot\mathrm{dist}(x_n,W_n)\\
      &\leq \|T\|\sqrt{C+\eta}\cdot \mathrm{dist}(x_n,W_n)\\
      |x_n^*(T(x_k))| &\leq \|T(x_k)\|\cdot\mathrm{dist}(x_n^*,G_n)
      \leq\|T\|\sqrt{C+\eta}\cdot\mathrm{dist}(x_n^*,G_n).
    \end{split}
  \end{equation*}
  We conclude that $\sum_n\sum_{m\neq n}|x^*_n(T(x_m))| < \eta$. A similar argument yields that
  $(x_n^*)_n$ is summably close to a block sequence of $(e_i^*)_i$. By Lemma \ref{what you need to
    factor}, the diagonal operator $D:X\to X$ given by $De_n = x_n^*(T(x_n))$ is bounded. By
  assumption, for any $\xi>0$, the identity $(K(\delta - \eta)+\xi)$ factors through $D$ and if
  $\eta$ is sufficiently small then by the second part of Lemma \ref{what you need to factor} the
  identity
  $\Big(\frac{ (\lambda K(\delta-\eta)+\xi)(C+\eta)^{2}}{1-2\lambda (K(\delta -
    \eta)+\xi)\eta}\Big)$-factors through $T$. Recall that by \eqref{k-ontinuous} the function
  $K:(0,\infty)\to\mathbb{R}$ is continuous. As we could have picked $\eta$ and $\xi$ arbitrarily
  close to zero we deduce that the identity almost $\lambda K(\delta)C^{2}$-factors through $T$.
\end{proof}

\section{A basic overview: multi-parameter Lebesgue and Hardy spaces}
\label{sec:basic-overview-multi}

Here we give a preparation for the following sections in which we exhibit examples of strategically
reproducible bases.

\subsection{The multi-parameter Haar system}
\label{sec:multi-parameter-haar}

We denote by $\mathcal{D}$ the collection of all dyadic intervals in $[0,1)$, namely
\begin{equation*}
  \mathcal{D} = \Big\{\Big[\frac{i-1}{2^j},\frac{i}{2^j}\Big): j\in\mathbb{N}\cup\{0\}, 1\leq
  i\leq 2^j\Big\}.
\end{equation*}
For each $n\in\mathbb{N}\cup\{0\}$ we define $\mathcal{D}_n = \{I\in\mathcal{D}: |I| = 2^{-n}\}$ and
$\mathcal{D}^n = \cup_{k=0}^n\mathcal{D}_k$.  We define the bijective function
$\mathcal{O} : \mathcal{D}\to \mathbb{N}$ by
\begin{equation*}
  \Big[\frac{i-1}{2^j},\frac{i}{2^j}\Big)
  \mapsto 2^j + i - 1.
\end{equation*}
The function $\mathcal{O}$ defines a linear order on $\mathcal{D}$.  Recall that Haar system
$(h_I)_{I\in\mathcal{D}}$ is defined as follows: if $I = [(i-1)/2^j,i/2^j)$ then set
$I^+ = [(i-1)/2^j,(2i-1)/2^{j+1})$, $I^- = [(2i-1)/2^{j+1},i/2^j)$, and
\begin{equation*}
  h_I
  = \chi_{I^+} - \chi_{I^-}.
\end{equation*}

The $d$-parameter dyadic rectangles $\mathcal{R}_d$ are given by
\begin{equation*}
  \mathcal{R}_d = \{ I_1\times\dots\times I_d : I_1,\dots,I_d\in\mathcal{D}\},
\end{equation*}
and the $d$-parameter tensor product Haar system $(h_{\bar I})_{\bar I \in\mathcal{R}_d}$ is given
by
\begin{equation*}
  h_{\bar I}(t_1,t_2,\ldots t_d)
  =h_{I_1}(t_1)\cdot  h_{I_2}(t_2)\cdot \ldots \cdot  h_{I_d}(t_d),
  \qquad t_1,t_2,\ldots t_d\in [0,1).
\end{equation*}

\subsection{A linear order on $\mathcal{R}_2$}\label{sec:order-clpc}

First, we define the bijective function $\mathcal{O}_{\mathbb N_0^2} : \mathbb N_0^2\to \mathbb N_0$
by
\begin{equation*}
  \mathcal O_{\mathbb N_0^2}(m,n) =
  \begin{cases}
    n^2 + m, & \text{if $m < n$},\\
    m^2 + m + n, & \text{if $m \geq n$}.
  \end{cases}
\end{equation*}
To see that $\mathcal O_{\mathbb N_0^2}$ is bijective consider that for each $k\in \mathbb N$:
\begin{itemize}
\item $\mathcal{O}_{\mathbb{N}_0^2}(0,0)=0$,
\item $m\mapsto \mathcal O_{\mathbb N_0^2}(m,k)$ maps $\{0,\dots,k-1\}$ bijectively onto
  $\{k^2,\dots,k^2+k-1\}$ and preserves the natural order on $\mathbb N_0$,
\item $\mathcal O_{\mathbb N_0^2}(k,0) = \mathcal O_{\mathbb N_0^2}(k-1,k)+1$,
\item $n\mapsto \mathcal O_{\mathbb N_0^2}(k,n)$ maps $\{0,\dots,k\}$ bijectively onto
  $\{k^2+k,\dots,k^2+2k\}$ and preserves the natural order on $\mathbb N_0$,
\item $\mathcal{O}_{\mathbb{N}_0^2}(0,k+1)=\mathcal{O}_{\mathbb{N}_0^2}(k,k)+1$.
\end{itemize}

Now, let $\lesslex$ denote the lexicographic order on $\mathbb R^3$. For two dyadic rectangles
$I_k\times J_k\in \mathcal{R}_2$ with $|I_k|=2^{-m_k}$, $|J_k|=2^{-n_k}$, $k=0,1$, we define
$I_0\times J_0 \drless I_1\times J_1$ if and only if
\begin{equation*}
  \big( \mathcal O_{\mathbb N_0^2}(m_0,n_0),\inf I_0, \inf J_0 \big)
  \lesslex \big( \mathcal O_{\mathbb N_0^2}(m_1,n_1),\inf I_1, \inf J_1 \big).
\end{equation*}
Associated to the linear ordering $\drless$ is the bijective index function
$\drindex : \mathcal{R}_2 \rightarrow \mathbb N_0$ defined by
\begin{equation*}
  \drindex(R_0) < \drindex(R_1)
  \Leftrightarrow R_0 \drless R_1,
  \qquad R_0,R_1 \in \mathcal{R}_2.
\end{equation*}
See Figure~\ref{fig:ordering-relation:2} for a picture of $\drindex$.
\begin{figure}[bt]
  \begin{center}
    \includegraphics{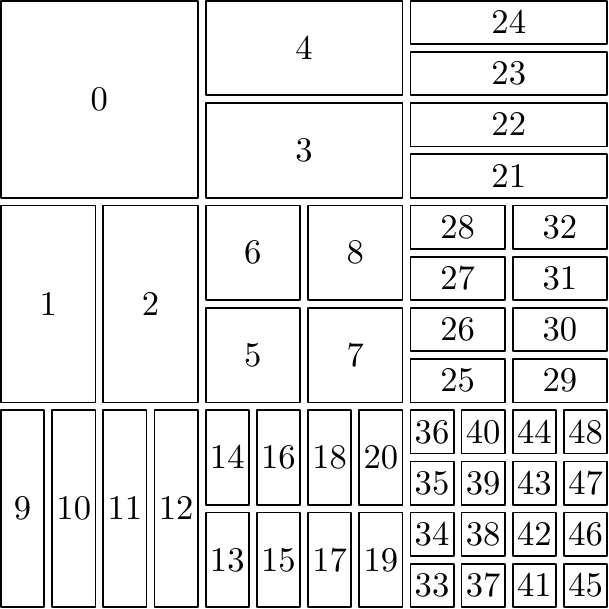}
  \end{center}
  \caption{The first $49$ rectangles and their indices $\drindex$.}
  \label{fig:ordering-relation:2}
\end{figure}

\subsection{Multi-parameter Lebesgue spaces}
\label{sec:multi-param-lebesg}

Given $1 \leq p \leq \infty$, we define $L^p_0$ as the closed subspace of $L^p[0,1]$ given by
\begin{equation*}
  L^p_0 = \Bigl\{ f\in L^p : \int_0^1 f(t) dt = 0\Bigr\}.
\end{equation*}
Note that the Haar system $(h_I)_{I\in\mathcal{D}}$ ordered by $\mathcal{O}$ is a monotone basis for
$L^p_0$, whenever $1 \leq p < \infty$.  Moreover, $(h_I)_{I\in\mathcal{D}}$ is unconditional in
$L^p_0$, whenever $1 < p < \infty$. The only reason for considering $L^p_0$ rather than $L^p$, is a
notational one.  Otherwise we would have to consider the first basis element of $L^p$, namely
$\chi_{[0,1]}$, always separately from the other ones.

Given $\bar p = (p_1,\dots,p_d)$, where $1 \leq p_1,\dots,p_d\leq \infty$ we define the mixed norm
Lebesgue space $L^{\bar p}$ by
\begin{equation*}
  L^{\bar p} = L^{p_1}(L^{p_2}(L^{p_3}(\ldots(L^{p_d} ))\ldots )).
\end{equation*}
Moreover, we define the closed subspace $L^{\bar p}_0$ of $L^{\bar p}$ by
\begin{equation*}
  L^{\bar p}_0
  = \Bigl\{
  f\in L^{\bar p} : \int_0^1 f(t_1\dots,t_d) d t_j = 0\ \text{for all $1\leq j\leq d$}
  \Bigr\}.
\end{equation*}
The $d$-parameter tensor product Haar system $(h_{\bar I})_{\bar I \in\mathcal{R}_d}$ is an
unconditional basis for $L^{\bar p}_0$, whenever $\bar p = (p_1,\dots,p_d)$ and
$1 < p_1,\dots,p_d < \infty$.  The dual of $L^{\bar p}_0$ is then $L^{\bar q}_0$, where
$\bar q=(q_1,\ldots, q_d)$, and $\frac1{p_i}+\frac1{q_i}=1$, for $i=1,2,\ldots,d$.

In the following, we prove the equivalence of the $L^{\bar p}$ norm and the d-parametric square
function norm in the reflexive case, i.e. $p = (p_1,\dots,p_d)$ with $1 < p_1,\dots,p_d < \infty$.
The content of Proposition~\ref{P:2.6} was known and used by Capon~\cite{capon:1982:2}; for the
convenience of the reader, we provide a detailed exposition, below.
\begin{prop}\label{P:2.6}
  Let $d\in\N$, $1<p_1,p_2, \ldots, p_d<\infty $ and $\bar p=(p_1,p_2,\ldots,p_d)$. For
  $f=\sum_{ \bar I\in \mathcal{R}_d} a_{\bar I} h_{\bar I}\in L^{\bar p}_0$ we define
  \begin{equation}\label{E:2.6.1}
    \trin f \trin_{\bar p}
    =\Bigg(\int_0^1\Bigg(\ldots\int_0^1 \Bigg(
    \int_0^1\Big(
    \sum_{\bar I\in\mathcal{R}_d} a_{\bar I}^2 h^2_{\bar I}(t_1,\ldots t_d)
    \Big)^{\frac{p_{d}}2} dt_d
    \Bigg)^{\frac{p_{d-1}}{p_{d}}}\ldots
    \Bigg)^{\frac{p_1}{p_2}}dt_1\Bigg)^{\frac1{p_1}}.
  \end{equation}
  Then $\trin\cdot\trin_{\bar p}$ is an equivalent norm on $L^{\bar p}_0$.  The dual norm to
  $\trin\cdot\trin_{\bar p}$ is equivalent (with constants depending on $\bar p$) to
  $\trin\cdot\trin_{\bar q}$, where $\bar q=(q_1,q_2,\ldots q_d)$, and $\frac1{p_i}+\frac1{q_i}=1$,
  for $i=1,2,\ldots,d$.
\end{prop}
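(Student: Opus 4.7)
My plan is to prove the two assertions separately; the duality claim will follow easily from the norm equivalence combined with the duality $(L^{\bar p}_0)^* = L^{\bar q}_0$.

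For the norm equivalence, I would use a Rademacher randomization. Let $(\varepsilon_{\bar I})_{\bar I \in \mathcal{R}_d}$ be independent $\pm 1$-valued symmetric random variables and set $f_\varepsilon = \sum_{\bar I} \varepsilon_{\bar I} a_{\bar I} h_{\bar I}$. The starting point is the unconditionality of the $d$-parameter tensor product Haar basis in $L^{\bar p}_0$ for $1<p_1,\dots,p_d<\infty$ (obtained by iterating the one-parameter Haar unconditionality in vector-valued $L^{p_i}$, using that each $L^{p_i}$ is UMD; this is essentially Capon's result). Unconditionality gives $\|f_\varepsilon\|_{L^{\bar p}} \approx \|f\|_{L^{\bar p}}$ uniformly in $\varepsilon$, and integrating over $\varepsilon$ yields
\begin{equation*}
  \|f\|_{L^{\bar p}}^{p_1} \approx \E_\varepsilon \|f_\varepsilon\|_{L^{\bar p}}^{p_1}.
\end{equation*}

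Next I would compute the right-hand side by iteratively combining Fubini with Kahane's moment-equivalence inequality. Kahane lets us replace the outer exponent $p_1$ by $p_2$ (the outermost integration in the definition of the mixed norm); Fubini then pushes $\E_\varepsilon$ past the $t_1$-integral. Inside the resulting $dt_1$-integral, Kahane again lets us pass from the outer power $p_2$ to $p_3$, so that Fubini pushes $\E_\varepsilon$ past the $t_2$-integral, and so on. After $d$ such swaps we are left with $\E_\varepsilon|\sum_{\bar I}\varepsilon_{\bar I} a_{\bar I} h_{\bar I}(\bar t)|^{p_d}$ as the innermost integrand, which by the classical scalar Khintchine inequality is equivalent to $(\sum_{\bar I} a_{\bar I}^2 h_{\bar I}(\bar t)^2)^{p_d/2}$. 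Reassembling the iterated integral with the correct ratios of exponents and then reversing the outermost Kahane step from $p_2$ back to $p_1$ produces exactly $\trin f \trin_{\bar p}^{p_1}$, and the equivalence constants depend only on $\bar p$.

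The duality assertion is now essentially automatic. The first part exhibits $(L^{\bar p}_0,\trin\cdot\trin_{\bar p})$ as isomorphic to $(L^{\bar p}_0,\|\cdot\|_{L^{\bar p}})$ with constants depending on $\bar p$; dualizing, the dual norm to $\trin\cdot\trin_{\bar p}$ is equivalent to $\|\cdot\|_{L^{\bar q}}$ on $(L^{\bar p}_0)^*=L^{\bar q}_0$. Applying the first part with $\bar q$ in place of $\bar p$ then gives equivalence with $\trin\cdot\trin_{\bar q}$.

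The main obstacle is the careful bookkeeping inside the iterated Kahane--Fubini procedure: every time Kahane is invoked at level $i$ to swap an outer $p_i$-power with a $p_{i+1}$-power, one is working inside an $(i-1)$-fold iterated integral, and one must check that the partially-integrated integrand is a nonnegative measurable function in the remaining variables before the next Fubini step. Once this is handled, the rest is a direct, if notation-heavy, computation that tracks how the $p_i$-exponents reassemble into the mixed-norm structure of $\trin\cdot\trin_{\bar p}$.
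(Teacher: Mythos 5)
Your computational engine --- Fubini combined with an iterated application of Kahane's inequality, finishing with the scalar Khintchine inequality --- is the same one the paper uses, and your reduction of the duality assertion to the norm equivalence plus $(L^{\bar p}_0)^*=L^{\bar q}_0$ is fine. The genuine gap is at your very first step. You randomize with independent signs $\varepsilon_{\bar I}$ indexed by \emph{rectangles} and invoke the full unconditionality of the $d$-parameter tensor Haar system in $L^{\bar p}_0$, justified by ``iterating the one-parameter Haar unconditionality in vector-valued $L^{p_i}$''. That iteration does not deliver what you need: composing coordinate-wise UMD sign changes only yields unconditionality for \emph{product} signs $\varepsilon_{\bar I}=\prod_{j}\varepsilon^{(j)}_{I_j}$, whereas your randomization requires arbitrary signs depending on the whole tuple. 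Arbitrary-sign unconditionality of the multi-parameter Haar system is a genuinely stronger, multi-parameter statement; the standard routes to it are either (a) the square-function equivalence~\eqref{E:2.6.1} itself --- which would make your argument circular --- or (b) product-sign unconditionality combined with Pisier's property $(\alpha)$ of $L^{\bar p}$ (equivalently, an $R$-boundedness/operator-valued martingale transform argument), and verifying property $(\alpha)$ for the mixed-norm space requires an iterated Khintchine--Fubini argument of the same depth as the proposition.

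The paper avoids this entirely by inducting on $d$ and randomizing only the outermost coordinate: the signs $\varepsilon_{I_1}$ depend on $I_1$ alone, so the only unconditionality ever used is that of the one-parameter Haar system in $L^{p_1}(X)$ with $X=L^{(p_2,\ldots,p_d)}_0$ a UMD space --- a scalar-sign martingale transform statement due to Maurey/Burkholder. The induction hypothesis then converts the inner $(d-1)$-parameter norm into a square function before the Kahane--Fubini cascade is run, and the final averaging over the single index $I_1$ needs only orthogonality. If you replace your global randomization by this one-coordinate-at-a-time randomization together with induction on $d$, the remainder of your computation goes through essentially verbatim.
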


\begin{proof}
  We show~\eqref{E:2.6.1} by induction on $d$.  By the unconditionality of the Haar system in $L^p$,
  $1 < p < \infty$ (theorem of Paley-Marcinkiewicz~\cite{Paley1932} and~\cite{marcinkiewicz:1937})
  the statement is true for $d=1$.  Assume that we proved~\eqref{E:2.6.1} for
  $L^{(p_2,\ldots,p_d)}_0$.  In the following, we use the abbreviations
  $X_j = L^{(p_1,\ldots,p_{j})}_0$ and $Y_j = L^{(p_j,\ldots,p_d)}_0(\ell^2(\mathcal{R}_{d-1}))$,
  $1\leq j \leq d$.  Since $L^{(p_2,\ldots,p_d)}_0$ has the $\mathrm{UMD}$ property\cite[page
  II.12]{maurey:systeme:1975} we obtain that
  \begin{equation*}
    \|f\|_{L^{\bar p}_0}
    \sim_{\bar p} \biggl(\int_0^1
    \Bigl\| \sum_{\bar I} \varepsilon_{I_1} a_{\bar I} h_{I_1}(t_1) h_{I_2,\ldots,I_d}\Bigr\|_{(p_2,\ldots,p_d)}^{p_1}
    dt_1 \biggr)^{\frac{1}{p_1}},
  \end{equation*}
  for all choices of signs $\varepsilon_{I_1}$, $I_1\in\mathcal{D}$.  Hence, by averaging and
  Kahane's inequality~\cite[Theorem~4]{kahane:1985}, we obtain
  \begin{align*}
    \|f\|_{L^{\bar p}_0}
    &\sim_{\bar p} \biggl(  \int_0^1 \Bigl( \cond_{\varepsilon} \Bigl\|
      \sum_{\bar I} \varepsilon_{I_1} a_{\bar I} h_{I_1}(t_1) h_{I_2,\ldots,I_d}
      \Bigr\|_{(p_2,\ldots,p_d)} \Bigr)^{p_1}
      dt_1 \biggr)^{\frac{1}{p_1}}\\
    &= \biggl\| \cond_{\varepsilon} \Bigl\|
      \sum_{\bar I} \varepsilon_{I_1} a_{\bar I} h_{I_1} h_{I_2,\ldots,I_d}
      \Bigr\|_{(p_2,\ldots,p_d)} \biggr\|_{X_1}.
  \end{align*}
  By induction hypothesis, we obtain
  \begin{align*}
    \|f\|_{L^{\bar p}_0}
    &\sim_{\bar p} \biggl\| \cond_{\varepsilon} \Bigl\| \Bigl(
      \sum_{(I_2,\ldots,I_d)}\bigl( \sum_{I_1} \varepsilon_{I_1} a_{\bar I} h_{I_1} \bigr)^2
      h^2_{I_2,\ldots,I_d}
      \Bigr)^{\frac{1}{2}}
      \Bigr\|_{(p_2,\ldots,p_d)} \biggr\|_{X_1}.
  \end{align*}
  Now, observe that
  \begin{align*}
    &\cond_{\varepsilon} \Bigl\| \Bigl(
      \sum_{(I_2,\ldots,I_d)}\bigl( \sum_{I_1} \varepsilon_{I_1} a_{\bar I} h_{I_1} \bigr)^2
      h^2_{I_2,\ldots,I_d}
      \Bigr)^{\frac{1}{2}}
      \Bigr\|_{(p_2,\ldots,p_d)}\\
    &\qquad= \cond_{\varepsilon} \biggl\| \Bigl\|
      \sum_{I_1} \varepsilon_{I_1}
      \Bigl( a_{\bar I} h_{I_1} h_{I_2,\ldots,I_d} \Bigr)_{(I_2,\ldots,I_d)}
      \Bigr\|_{\ell^2(I_2,\ldots,I_d)} \biggr\|_{(p_2,\ldots,p_d)}\\
    &\qquad= \cond_{\varepsilon} \Bigl\|
      \sum_{I_1} \varepsilon_{I_1}
      \Bigl( a_{\bar I} h_{I_1} h_{I_2,\ldots,I_d} \Bigr)_{(I_2,\ldots,I_d)}
      \Bigr\|_{Y_2}.
  \end{align*}
  Kahane's inequality yields
  \begin{align*}
    &\cond_{\varepsilon} \Bigl\|
      \sum_{I_1} \varepsilon_{I_1}
      \Bigl( a_{\bar I} h_{I_1} h_{I_2,\ldots,I_d} \Bigr)_{(I_2,\ldots,I_d)}
      \Bigr\|_{Y_2}\\
    &\qquad\sim_{\bar p} \Bigl(
      \cond_{\varepsilon} \Bigl\|
      \sum_{I_1} \varepsilon_{I_1}
      \Bigl( a_{\bar I} h_{I_1} h_{I_2,\ldots,I_d} \Bigr)_{(I_2,\ldots,I_d)}
      \Bigr\|_{Y_2}^{p_2}
      \Bigr)^{\frac{1}{p_2}}\\
    &\qquad=\Bigl( \int \cond_{\varepsilon} \Bigl\|
      \sum_{I_1} \varepsilon_{I_1}
      \Bigl( a_{\bar I} h_{I_1} h_{I_2,\ldots,I_d} \Bigr)_{(I_2,\ldots,I_d)}
      \Bigr\|_{Y_3}
      dt_2
      \Bigr)^{\frac{1}{p_2}}.
  \end{align*}
  Combining our estimates yields
  \begin{align*}
    \|f\|_{L^{\bar p}_0}
    &\sim_{\bar p} \biggl\| \Bigl(
      \int \cond_{\varepsilon} \Bigl\|
      \sum_{I_1} \varepsilon_{I_1}
      \Bigl( a_{\bar I} h_{I_1} h_{I_2,\ldots,I_d} \Bigr)_{(I_2,\ldots,I_d)}
      \Bigr\|_{Y_3}
      dt_2 \Bigr)^{\frac{1}{p_2}} \biggr\|_{X_1}\\
    &= \biggl\|
      \cond_{\varepsilon} \Bigl\|
      \sum_{I_1} \varepsilon_{I_1}
      \Bigl( a_{\bar I} h_{I_1} h_{I_2,\ldots,I_d} \Bigr)_{(I_2,\ldots,I_d)}
      \Bigr\|_{Y_3}
      \biggr\|_{X_2}.
  \end{align*}
  With the same argument, we obtain
  \begin{equation*}
    \|f\|_{L^{\bar p}_0}
    \sim_{\bar p}
    \biggl\| \cond_{\varepsilon} \Bigl\|
    \sum_{I_1} \varepsilon_{I_1}
    \Bigl( a_{\bar I} h_{I_1} h_{I_2,\ldots,I_d} \Bigr)_{(I_2,\ldots,I_d)}
    \Bigr\|_{Y_4}
    \biggr\|_{X_3}.
  \end{equation*}
  Continuing in this fashion yields
  \begin{equation}\label{eq:iter:1}
    \|f\|_{L^{\bar p}_0}
    \sim_{\bar p}
    \biggl\| \cond_{\varepsilon}\Bigl\|
    \sum_{I_1} \varepsilon_{I_1}
    \Bigl( a_{\bar I} h_{I_1} h_{I_2,\ldots,I_d} \Bigr)_{(I_2,\ldots,I_d)}
    \Bigr\|_{\ell^2(\mathcal{R}_{d-1})}
    \biggr\|_{L^{\bar p}_0}.
  \end{equation}
  Applying Kahane's inequality one last time, we obtain
  \begin{align}
    &\cond_{\varepsilon}\Bigl\|
      \sum_{I_1} \varepsilon_{I_1}
      \Bigl( a_{\bar I} h_{I_1} h_{I_2,\ldots,I_d} \Bigr)_{(I_2,\ldots,I_d)}
      \Bigr\|_{\ell^2(\mathcal{R}_{d-1})}
      \notag\\
    &\qquad \sim_{\bar p} \biggl( \cond_{\varepsilon}\Bigl\|
      \Bigl( \sum_{I_1} \varepsilon_{I_1} a_{\bar I} h_{I_1} h_{I_2,\ldots,I_d} \Bigr)_{(I_2,\ldots,I_d)}
      \Bigr\|_{\ell^2(\mathcal{R}_{d-1})}^2
      \biggr)^{1/2}.
      \label{eq:iter:2}
  \end{align}
  Note that the last expression is equal to
  \begin{align}\label{eq:iter:3}
    \biggl( \sum_{I_2,\ldots,I_d} \cond_{\varepsilon} \Bigl|
    \sum_{I_1} \varepsilon_{I_1} a_{\bar I} h_{I_1} h_{I_2,\ldots,I_d}
    \Bigr|^2
    \biggr)^{1/2}
    &= \biggl(
      \sum_{I_2,\ldots,I_d} \sum_{I_1} a_{\bar I}^2 h_{I_1}^2 h_{I_2,\ldots,I_d}^2
      \biggr)^{1/2}
      \notag\\
    &= \biggl( \sum_{\bar I} a_{\bar I}^2 h_{\bar I}^2 \biggr)^{1/2}.
  \end{align}
  Combining~\eqref{eq:iter:1} with \eqref{eq:iter:2} and~\eqref{eq:iter:3} yields~\eqref{E:2.6.1}.
\end{proof}

\subsection{Hardy spaces}
\label{sec:hardy-spaces}

We define the dyadic Hardy spaces $H^p$, $1\leq p < \infty$ as the completion of
\begin{equation*}
  \spa\{ h_I : I\in\mathcal{D}\}
\end{equation*}
under the square function norm
\begin{equation*}
  \Bigl\| \sum_{I\in\mathcal{D}} a_I h_I \Bigr\|_{H^p}
  = \Bigl\|\mathbb{S}\Bigl(\sum_{I\in\mathcal{D}} a_I h_I\Bigr)\Bigr\|_{L^p},
\end{equation*}
where the square function $\mathbb{S}$ is given by
\begin{equation*}
  \mathbb{S}\Bigl(\sum_{I\in\mathcal{D}} a_I h_I\Bigr)
  = \Bigl( \sum_{I\in\mathcal{D}} a_I^2 h_I^2 \Bigr)^{1/2},
\end{equation*}
for all scalar sequences $(a_I)_{I\in\mathcal{D}}$.

The bi-parameter dyadic Hardy spaces $H^p(H^q)$, $1\leq p,q <\infty$ are defined as the completion
of
\begin{equation*}
  \spa\{ h_{\bar I} : \bar I\in\mathcal{R}_2\}
\end{equation*}
under the bi-parameter square function norm
\begin{equation*}
  \Bigl\| \sum_{\bar I\in\mathcal{R}_2} a_{\bar I} h_{\bar I} \Bigr\|_{H^p(H^q)}
  = \Bigl\|\mathbb{S}\Bigl(\sum_{\bar I\in\mathcal{R}_2} a_{\bar I} h_{\bar I}\Bigr)\Bigr\|_{L^p(L^q)},
\end{equation*}
where the bi-parameter square function $\mathbb{S}$ is given by
\begin{equation*}
  \mathbb{S}\Bigl(\sum_{\bar I\in\mathcal{R}_2} a_{\bar I} h_{\bar I}\Bigr)
  = \Bigl( \sum_{\bar I\in\mathcal{R}_2} a_{\bar I}^2 h_{\bar I}^2 \Bigr)^{1/2},
\end{equation*}
for all scalar sequences $(a_{\bar I})_{\bar I\in\mathcal{R}_2}$.

The following Lemma is taken from~\cite{laustsen:lechner:mueller:2015}.
\begin{lem}\label{lem:w-w*-convergence}
  For $m\in\mathbb N$, let $\mathcal{X}_m$ and $\mathcal{Y}_m$ be non-empty, finite families of
  pairwise disjoint dyadic intervals, define
  $f_m = \sum_{I\in\mathcal{X}_m,\, J\in\mathcal{Y}_m} h_{I\times J}$, and let $1\leq p,q < \infty$.
  Suppose in addition that:
  \begin{itemize}
  \item $\mathcal{X}_m\cap\mathcal{X}_n=\emptyset$ or $\mathcal{Y}_m\cap\mathcal{Y}_n=\emptyset$
    whenever $m,n\in\mathbb N$ are distinct;
  \item $\bigcup\mathcal{X}_m = \bigcup\mathcal{X}_n$ and
    $\bigcup\mathcal{Y}_m = \bigcup\mathcal{Y}_n$ for all $m,n\in\mathbb{N}$.
  \end{itemize}
  Then for each $\gamma\in\ell^\infty(\mathcal{R})$ with $\|\gamma\|_\infty\leq 1$, the operator
  $M_\gamma$ defined as the linear extension of the map
  $h_{I\times J}\mapsto \gamma_{I\times J} h_{I\times J}$ is bounded by $1$, both as a map from
  $H^p(H^q)$ to itself and from $H^p(H^q)^*$ to itself.  Moreover,
  \begin{enumerate}[(i)]
  \item\label{lem:w-w*-convergence:1} for each $g\in H^p(H^q)^*$,
    $\sup_{\gamma\in B_{\ell^\infty(\mathcal{R})}}|\langle M_\gamma f_m, g\rangle|\to 0$ as
    $m\to\infty$;
  \item\label{lem:w-w*-convergence:2} for each $g\in H^p(H^q)$,
    $\sup_{\gamma\in B_{\ell^\infty(\mathcal{R})}}|\langle M_\gamma g, f_m\rangle|\to 0$ as
    $m\to\infty$.
  \end{enumerate}
\end{lem}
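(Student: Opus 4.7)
The plan is to bound $M_\gamma$ by pointwise monotonicity of the bi-parameter square function, to reduce via Ramsey's theorem to a subsequence on which one of the families is pairwise disjoint, and then to prove the uniform convergence by approximating $g$ with finite Haar combinations.  For the operator norm bound, if $u = \sum_R a_R h_R$ and $\|\gamma\|_\infty \leq 1$, then pointwise $\mathbb{S}^2(M_\gamma u) = \sum_R \gamma_R^2 a_R^2 h_R^2 \leq \mathbb{S}^2(u)$, so $\|M_\gamma\|_{H^p(H^q)\to H^p(H^q)}\leq 1$.  Since $M_\gamma$ is self-adjoint with respect to the Haar-coefficient pairing, the same bound transfers to $H^p(H^q)^*$.

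For the convergence statements, I would first reduce to a subsequence (which is harmless, as every subsequence has a further sub-subsequence converging to $0$ iff the whole sequence does).  Color each pair $\{m,n\}$ by $A$ if $\mathcal{X}_m\cap\mathcal{X}_n=\emptyset$, and by $B$ otherwise (so $\mathcal{Y}_m\cap\mathcal{Y}_n=\emptyset$ by hypothesis); applying Ramsey's theorem and swapping roles if needed, I may assume the $\mathcal{X}_m$ are pairwise disjoint.  Then every dyadic rectangle $I\times J$ belongs to at most one family $\mathcal{X}_m\times\mathcal{Y}_m$.  Choosing $\gamma_R=\sign\langle h_R,g\rangle$ on $\mathcal{X}_m\times\mathcal{Y}_m$ and $0$ elsewhere attains the supremum,
\[
\sup_{\|\gamma\|_\infty\leq 1}|\langle M_\gamma f_m,g\rangle| = \sum_{R\in\mathcal{X}_m\times\mathcal{Y}_m}|\langle h_R,g\rangle|,
\]
so the task is to show that this $\ell^1$-sum vanishes as $m\to\infty$.

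For $g$ in the finite linear span of the Haar system, the Haar supports of $g$ and of $f_m$ are disjoint for all sufficiently large $m$, so the sum vanishes identically.  For general $g$ I would approximate in the appropriate norm by such $g'$ with $\|g-g'\|<\varepsilon$ and estimate
\[
\sum_{R\in\mathcal{X}_m\times\mathcal{Y}_m}|\langle h_R,g-g'\rangle| \leq \|f_m\|\cdot\|g-g'\|,
\]
noting that $\|f_m\|$ is uniformly bounded (indeed, $\mathbb{S}(f_m)=\chi_{\bigcup\mathcal{X}_m\times\bigcup\mathcal{Y}_m}$ is the same characteristic function for every $m$).  For (ii), with $g\in H^p(H^q)$, this norm-approximation is always available, since the Haar system is a Schauder basis of $H^p(H^q)$ for all $1\leq p,q<\infty$; for (i) with $1<p,q<\infty$, the basis is shrinking and the same applies to $g\in H^p(H^q)^*$.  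The main obstacle I anticipate is case (i) when $p=1$ or $q=1$: here the dual is a dyadic-BMO type space in which the Haar system is only weak-$*$ dense, and the approximation step must be replaced by a subtler argument, perhaps exploiting the Carleson condition on the Haar coefficients of a BMO function over shrinking families of rectangles, or using an atomic decomposition of the predual.
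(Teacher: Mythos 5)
The paper does not actually prove this lemma---it imports it verbatim from \cite{laustsen:lechner:mueller:2015}---so your argument can only be judged on its own terms. The parts you complete are sound: the pointwise monotonicity of the square function gives $\|M_\gamma\|\le 1$ and transfers to the dual by self-adjointness of the Haar-coefficient pairing; the supremum over $\gamma$ is indeed $\sum_{R\in\mathcal{X}_m\times\mathcal{Y}_m}|\langle h_R,g\rangle|$; and since the first hypothesis already forces the rectangle families $\mathcal{X}_m\times\mathcal{Y}_m$ to be pairwise disjoint, finitely Haar-supported $g$ are handled correctly. (For the same reason the Ramsey step is superfluous---pairwise disjointness of the families of \emph{rectangles} is all you ever use, and it is immediate from the hypothesis; also note that ``swapping roles'' of the two coordinates is not an innocent symmetry in $H^p(H^q)$.)

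The genuine gap is the one you flag yourself and then leave open: part (i) when $p=1$ or $q=1$. This is not a peripheral case---Theorem~\ref{thm:HpHq:strat-rep} is stated for $1\le p,q<\infty$ and $H^1$ is precisely the point of the paper---so the proof is incomplete as written. The same endpoint issue also infects part (ii): your estimate $\sum_R|\langle h_R,g-g'\rangle|\le\|f_m\|\cdot\|g-g'\|$ needs $f_m$ to be uniformly bounded in the norm \emph{dual} to the one in which $g$ is approximated, and your computation $\mathbb{S}(f_m)=\chi_{E\times F}$ (with $E=\bigcup\mathcal{X}_m$, $F=\bigcup\mathcal{Y}_m$) only controls $\|f_m\|_{H^p(H^q)}$; for $p=1$ or $q=1$ the required bound $\sup_m\|f_m\|_{(H^p(H^q))^*}<\infty$ is a BMO-type statement you have not supplied. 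Both gaps close with one counting argument that needs no description of the dual. Suppose $a_m:=\sum_{R\in\mathcal{X}_m\times\mathcal{Y}_m}|\langle h_R,g\rangle|\ge\delta>0$ along a subsequence, and let $\gamma^{(m)}$ be the optimizing signs. By disjointness of the families, $\sum_{m=1}^{N}M_{\gamma^{(m)}}f_m=M_{\Gamma_N}\bigl(\sum_{m=1}^{N}f_m\bigr)$ for a single $\Gamma_N$ with $\|\Gamma_N\|_\infty\le1$, while the second hypothesis gives $\mathbb{S}\bigl(\sum_{m=1}^{N}f_m\bigr)^2=\sum_{m=1}^{N}\mathbb{S}(f_m)^2=N\chi_{E\times F}$, whence $\bigl\|\sum_{m=1}^{N}M_{\gamma^{(m)}}f_m\bigr\|_{H^p(H^q)}\le\sqrt{N}\,|E|^{1/p}|F|^{1/q}$. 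Pairing with $g$ yields $N\delta\le\sum_{m=1}^{N}a_m\le\sqrt{N}\,|E|^{1/p}|F|^{1/q}\,\|g\|$, a contradiction for large $N$; this proves (i) for all $1\le p,q<\infty$. A symmetric computation (Cauchy--Schwarz against the square function of a test element of $H^p(H^q)$ shows $\bigl\|\sum_{m=1}^{N}M_{\gamma^{(m)}}f_m\bigr\|_{(H^p(H^q))^*}\le\sqrt{N}\,|E|^{1/p'}|F|^{1/q'}$) proves (ii). The Carleson-condition route you sketch for (i) also works, but it requires the Chang--Fefferman description of the dual, which the counting argument avoids.
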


\subsection{Collections of dyadic intervals}
\label{sec:coll-dyad-interv}

We introduce convenient notation and gather basic facts of collections of dyadic intervals.

\begin{notation}~\label{level partition}
  \begin{itemize}

  \item[(i)] For $\mathcal{A}\subset\mathcal{D}$ set
    $\mathscr{G}_0(\mathcal{A}) = \{I\in\mathcal{A}: I$ is maximal with respect to inclusion$\}$.

  \item[(ii)] For $\mathcal{A}\subset\mathcal{D}$ recursively define for $n\in\mathbb{N}$ the
    collection
    \begin{equation*}
      \mathscr{G}_n(\mathcal{A}) =
      \mathscr{G}_0(\mathcal{A}\setminus(\cup_{k=0}^{n-1}\mathscr{G}_k(\mathcal{A}))).
    \end{equation*} For every
    $n\in\mathbb{N}$ and $I\in\mathscr{G}_{n+1}(\mathcal{A})$ there is a unique
    $J\in\mathscr{G}_{n}(\mathcal{A})$ with $I\subset J$. In paricular,
    $\mathscr{G}_{n+1}(\mathcal{A})^*\subset \mathscr{G}_{n}(\mathcal{A})^*$

  \item[(iii)] For $\mathcal{A}\subset\mathcal{D}$ set
    $\lim\sup\mathcal{A} = \cap_n \mathscr{G}_n(\mathcal{A})^*$.

  \item[(iv)] For a finite $\mathcal{H}\subset\mathcal{D}$, consisting of pairwise disjoint
    intervals, and $\bar \varepsilon = (\varepsilon)_{I\in\mathcal{H}}\in\{-1,1\}^\mathcal{H}$ we
    define
    \begin{equation*}
      \mathcal{H}^*_{\bar\varepsilon} = \Big[\sum_{I\in\mathcal{H}}\varepsilon_Ih_I = 1\Big]\text{
        and }\mathcal{H}^*_{\text{-}\bar\varepsilon} = \Big[\sum_{I\in\mathcal{H}}\varepsilon_Ih_I =
      -1\Big].
    \end{equation*}
    These two sets have measure $|\mathcal{H}^*|/2$ and they form a partition of $\mathcal{H}^*$.
  \item[(v)] For $\mathcal{A}\subset\mathcal{D}$, $n,k\in\mathbb{N}$, with $k\geq n$ and a finite
    $\mathcal{H}\subset\mathscr{G}_n(\mathcal{A})$ define the collection
    \begin{equation*}
      \mathcal{H}^\mathrm{succ}_k = \{I\in\mathscr{G}_k(\mathcal{A}): I\subset J\text{ for some
      }J\in \mathcal{H}\}.
    \end{equation*}
    For any $\bar \varepsilon\in\{-1,1\}^{\mathcal{H}}$ define the sets
    \begin{equation*}
      \mathcal{H}^\mathrm{succ}_{\bar \varepsilon, k} = \{I\in\mathscr{G}_k(\mathcal{A}): I\subset
      \mathcal{H}^*_{\bar\varepsilon}\}\text{ and }\mathcal{H}^\mathrm{succ}_{\text{-}\bar
        \varepsilon, k} = \{I\in\mathscr{G}_k(\mathcal{A}): I\subset
      \mathcal{H}^*_{\text{-}\bar\varepsilon}\}.
    \end{equation*}
    The sets $\mathcal{H}^\mathrm{succ}_{\bar \varepsilon, k}$,
    $\mathcal{H}^\mathrm{succ}_{\text{-}\bar \varepsilon, k}$ form a partition of
    $\mathcal{H}_k$. We point out that the definitions of $\mathcal{H}^\mathrm{succ}_k$,
    $\mathcal{H}^\mathrm{succ}_{\bar \varepsilon, k}$, and
    $\mathcal{H}^\mathrm{succ}_{\text{-}\bar \varepsilon, k}$ depend on the ambient collection
    $\mathcal{A}$.
  \end{itemize}
\end{notation}

Lemma~\ref{assume finite} and~\ref{eventual choices} will be used in
Section~\ref{sec:haar-system-l1_0-1}.
\begin{lem}\label{assume finite}
  Let $\mathcal{A}\subset\mathcal{D}$. Then for any $\kappa >0$ there is
  $\tilde{\mathcal{A}}\subset\mathcal{A}$ so that for each $k\in\mathbb{N}$ we have
  \begin{itemize}
  \item[(i)] $\mathscr{G}_k(\tilde{\mathcal{A}})$ is finite and
    $\mathscr{G}_k(\tilde{\mathcal{A}})\subset\mathscr{G}_k(\mathcal{A})$ and
  \item[(ii)] $|\limsup(\tilde{\mathcal{A}})| \geq |\limsup(\mathcal{A})| - \kappa$.
  \end{itemize}
\end{lem}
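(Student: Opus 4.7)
The plan is to construct $\tilde{\mathcal{A}}=\bigcup_{n\geq 0}\mathcal{F}_n$, where each $\mathcal{F}_n$ is a finite subfamily of $\mathscr{G}_n(\mathcal{A})$, chosen inductively so that the nesting $\mathcal{F}_{n+1}^*\subset \mathcal{F}_n^*$ is maintained. The key identity I want to exploit is that under this nested choice one has $\mathscr{G}_n(\tilde{\mathcal{A}})=\mathcal{F}_n$ for every $n$. Indeed, each $I\in\mathcal{F}_n$ with $n\geq 1$ is properly contained in some element of $\mathcal{F}_{n-1}\subset\tilde{\mathcal{A}}$, so $I$ cannot be maximal at any earlier level; and its maximality inside $\mathcal{A}\setminus\bigcup_{k<n}\mathscr{G}_k(\mathcal{A})$ passes to maximality inside $\tilde{\mathcal{A}}\setminus\bigcup_{k<n}\mathcal{F}_k=\bigcup_{m\geq n}\mathcal{F}_m$, since the latter is a subset of the former. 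Granting this identity, part (i) is immediate from $\mathcal{F}_n\subset\mathscr{G}_n(\mathcal{A})$ and the finiteness of $\mathcal{F}_n$.

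To produce the $\mathcal{F}_n$, I use that each $\mathscr{G}_n(\mathcal{A})$ is a countable pairwise disjoint family of dyadic intervals of total measure at most $1$, and that $\limsup\mathcal{A}\subset\mathscr{G}_n(\mathcal{A})^*$ up to a null set. First pick finite $\mathcal{F}_0\subset\mathscr{G}_0(\mathcal{A})$ with $|\limsup\mathcal{A}\setminus\mathcal{F}_0^*|<\kappa/2$. Given $\mathcal{F}_n$, combine the nesting $\mathscr{G}_{n+1}(\mathcal{A})^*\subset\mathscr{G}_n(\mathcal{A})^*$ with disjointness within $\mathscr{G}_n(\mathcal{A})$ to see that every $I\in\mathscr{G}_{n+1}(\mathcal{A})$ meeting $\mathcal{F}_n^*$ sits in the unique $J\in\mathscr{G}_n(\mathcal{A})$ containing it, and this $J$ is forced to lie in $\mathcal{F}_n$; so the countable disjoint family $\{I\in\mathscr{G}_{n+1}(\mathcal{A}):I\subset\mathcal{F}_n^*\}$ covers $\limsup\mathcal{A}\cap\mathcal{F}_n^*$ up to null sets. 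Pick a finite sub-family $\mathcal{F}_{n+1}$ of it with
\begin{equation*}
\bigl|\limsup\mathcal{A}\cap\mathcal{F}_n^*\setminus\mathcal{F}_{n+1}^*\bigr|<\kappa/2^{n+2}.
\end{equation*}

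Telescoping then gives $|\limsup\mathcal{A}\setminus\mathcal{F}_n^*|<\kappa\sum_{k=0}^{n}2^{-(k+1)}<\kappa$ for every $n$. Since $\limsup\tilde{\mathcal{A}}=\bigcap_n\mathcal{F}_n^*$ with the $\mathcal{F}_n^*$ decreasing, downward continuity of Lebesgue measure applied to the increasing sets $\limsup\mathcal{A}\setminus\mathcal{F}_n^*$ yields $|\limsup\mathcal{A}\setminus\limsup\tilde{\mathcal{A}}|\leq\kappa$, which is exactly (ii). The most delicate point is the identity $\mathscr{G}_n(\tilde{\mathcal{A}})=\mathcal{F}_n$ from the first paragraph: it rules out the possibility that an interval chosen for some $\mathcal{F}_n$ ``promotes'' to a higher level of $\tilde{\mathcal{A}}$ because its true parent in $\mathcal{A}$ was discarded. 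This is where the nesting condition $\mathcal{F}_{n+1}^*\subset\mathcal{F}_n^*$ does its work, together with the fact (proved in Notation~\ref{level partition}(ii)) that each $I\in\mathscr{G}_{n+1}(\mathcal{A})$ has a unique ancestor in $\mathscr{G}_n(\mathcal{A})$.
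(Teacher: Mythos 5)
Your construction is the same as the paper's: choose a finite $\mathcal{F}_0\subset\mathscr{G}_0(\mathcal{A})$, then recursively a finite subfamily of the successors of $\mathcal{F}_n$ inside $\mathscr{G}_{n+1}(\mathcal{A})$ losing measure at most $\kappa/2^{n+2}$, and telescope; the only cosmetic difference is that you control the loss relative to $\limsup\mathcal{A}$ rather than relative to the full successor set, and you spell out the induction showing $\mathscr{G}_n(\tilde{\mathcal{A}})=\mathcal{F}_n$, which the paper leaves to the reader. The argument is correct.
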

\begin{proof}
  Pick a finite $\mathcal{B}_0\subset\mathscr{G}_0(\mathcal{A})$ with
  $|\mathscr{G}_0(\mathcal{A})^*\setminus\mathcal{B}_0^*|<\kappa/2$. Recursively for
  $k\in\mathbb{N}$, if $\mathcal{C} = \mathcal{B}_{k-1}$, pick
  $\mathcal{B}_k\subset\mathcal{C}_k^\mathrm{succ}$ with
  $|(\mathcal{C}_k^\mathrm{succ})^*\setminus\mathcal{B}^*_{k}|<\kappa/2^{k+1}$. Set
  $\tilde{\mathcal{A}} = \cup_{k=0}^\infty\mathcal{B}_k$. One can check by induction that for all
  $k\in\mathbb{N}$ we have
  $\mathscr{G}_k(\tilde{\mathcal{A}}) = \mathcal{B}_k\subset\mathscr{G}_k(\mathcal{A})$ and that
  $|\mathscr{G}_k(\mathcal{A})^*\setminus\mathcal{B}_k^*| \leq \sum_{i=1}^k\kappa/2^i$.  The
  conclusion easily follows.
\end{proof}

\begin{lem}
  \label{eventual choices}
  Let $\mathcal{A}\subset\mathcal{D}$, $n\in\mathbb{N}$, $\kappa\in(0,1/2)$, and
  $\mathcal{H}\subset\mathscr{G}_n(\mathcal{A})$ be a non-empty finite collection so that if
  $A = \limsup\mathcal{A}$ then $|\mathcal{H}^*\cap A| >(1-\kappa)|\mathcal{H}^*|$. The following
  hold.
  \begin{itemize}
  \item[(i)] For any $\bar\varepsilon\in\{-1,1\}^\mathcal{H}$, if
    $C = \mathcal{H}_{\bar\varepsilon}^*$ or $C = \mathcal{H}_{\text{-}\bar\varepsilon}^*$ we have
    \begin{equation*}
      \frac{|\mathcal{H}^*|}{2}\geq |C\cap A| > (1-2\kappa)|C| =
      (1-2\kappa)\frac{|\mathcal{H}^*|}{2}.
    \end{equation*}
  \item[(ii)] For any $\delta > 0$ there exists $k_0\in\mathbb{N}$ so that for all $k\geq k_0$ and
    $\bar\varepsilon\in\{-1,1\}^\mathcal{H}$, if
    $C = (\mathcal{H}_{\bar\varepsilon,k}^\mathrm{succ})^*$ or
    $C = (\mathcal{H}_{\text{-}\bar\varepsilon,k}^\mathrm{succ})^*$ we have
    $|C\cap A| \geq (1-\delta)|C|$.
  \end{itemize}
\end{lem}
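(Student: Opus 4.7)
Part (i) follows from a direct measure estimate. The sets $\mathcal{H}^*_{\bar\varepsilon}$ and $\mathcal{H}^*_{-\bar\varepsilon}$ partition $\mathcal{H}^*$ into two pieces each of measure $|\mathcal{H}^*|/2$, so in either case $|C|=|\mathcal{H}^*|/2$. The hypothesis rewrites as $|\mathcal{H}^*\setminus A|<\kappa|\mathcal{H}^*|$, and because $C\subset\mathcal{H}^*$ this gives $|C\setminus A|\leq|\mathcal{H}^*\setminus A|<\kappa|\mathcal{H}^*|=2\kappa|C|$. Subtracting from $|C|$ yields $|C\cap A|>(1-2\kappa)|C|$, and the upper bound $|C\cap A|\leq|C|=|\mathcal{H}^*|/2$ is automatic.

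For part (ii), the plan is first to establish the geometric identity
\[
(\mathcal{H}^{\mathrm{succ}}_{\bar\varepsilon,k})^*=\mathscr{G}_k(\mathcal{A})^*\cap\mathcal{H}^*_{\bar\varepsilon}\qquad\text{for all }k\geq n+1,
\]
and then exploit that $\mathscr{G}_m(\mathcal{A})^*\searrow A$. The key observation is that any $I\in\mathscr{G}_k(\mathcal{A})$ with $I\subset J$ for some $J\in\mathcal{H}\subset\mathscr{G}_n(\mathcal{A})$ must satisfy $|I|\leq|J|/2$, so as a dyadic interval $I$ lies entirely in exactly one of the two halves $J^{+}$, $J^{-}$, hence entirely in exactly one of $\mathcal{H}^*_{\bar\varepsilon}$, $\mathcal{H}^*_{-\bar\varepsilon}$. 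Conversely, since $\mathscr{G}_k(\mathcal{A})^*\subset\mathscr{G}_n(\mathcal{A})^*$ and $\mathscr{G}_n(\mathcal{A})$ consists of pairwise disjoint intervals, any $I\in\mathscr{G}_k(\mathcal{A})$ meeting $\mathcal{H}^*$ is contained in a unique $J\in\mathcal{H}$. The identity then follows from the disjointness of $\mathscr{G}_k(\mathcal{A})$, and the analogous statement for $-\bar\varepsilon$ is identical.

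To conclude, I would invoke continuity of Lebesgue measure along the decreasing family $(\mathscr{G}_m(\mathcal{A})^*)_m$, which yields $|\mathscr{G}_k(\mathcal{A})^*\setminus A|\to 0$ as $k\to\infty$. Using the identity above, for $C=(\mathcal{H}^{\mathrm{succ}}_{\bar\varepsilon,k})^*$ (or its $-\bar\varepsilon$ counterpart) we have $|C\setminus A|\leq|\mathscr{G}_k(\mathcal{A})^*\setminus A|\to 0$, while $|C|\to|A\cap\mathcal{H}^*_{\bar\varepsilon}|\geq(1-2\kappa)|\mathcal{H}^*|/2>0$ by part (i). Hence $|C\setminus A|/|C|\to 0$, and since $\{-1,1\}^{\mathcal{H}}$ is finite the threshold $k_0$ can be chosen uniformly over all $\bar\varepsilon$ and both sign choices. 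The main obstacle is the geometric identification in part (ii); once that is in place, the remainder is soft measure theory combined with the positive lower bound supplied by part (i).
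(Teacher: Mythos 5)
Your proof is correct and follows essentially the same route as the paper's: part (i) is the same counting argument phrased via $|\mathcal{H}^*\setminus A|<\kappa|\mathcal{H}^*|$, and part (ii) rests, as in the paper, on the fact that $((\mathcal{H}^{\mathrm{succ}}_{\bar\varepsilon,k})^*)_k$ decreases to $\mathcal{H}^*_{\bar\varepsilon}\cap A$ together with continuity of Lebesgue measure and the positive lower bound from (i). Your explicit verification of the identity $(\mathcal{H}^{\mathrm{succ}}_{\bar\varepsilon,k})^*=\mathscr{G}_k(\mathcal{A})^*\cap\mathcal{H}^*_{\bar\varepsilon}$ is a welcome elaboration of a step the paper leaves implicit, but it is not a different argument.
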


\begin{proof}
  For the proof of (i) let $C = \mathcal{H}_{\bar\varepsilon}^*$.  Recall that
  $|\mathcal{H}_{\bar\varepsilon}^*| = |\mathcal{H}_{\text{-}\bar \varepsilon}^*| =
  |\mathcal{H^*}|/2$. Now,
  \begin{equation*}
    \begin{split}
      (1-\kappa)2|\mathcal{H}_{\bar\varepsilon}^*| &=(1-\kappa)||\mathcal{H}^*| < |\mathcal{H}^*\cap
      A| = |\mathcal{H}_{\bar\varepsilon}^*\cap A| +
      |\mathcal{H}_{\text{-}\bar\varepsilon}^*\cap A|\\
      &\leq |\mathcal{H}_{\bar\varepsilon}^*\cap A| + |\mathcal{H}_{\text{-}\bar\varepsilon}^*| =
      |\mathcal{H}_{\bar\varepsilon}^*\cap A| + |\mathcal{H}_{\bar\varepsilon}^*|
    \end{split}
  \end{equation*}
  which yields
  $|\mathcal{H}_{\bar\varepsilon}^*\cap A| > (1-2\kappa)|\mathcal{H}_{\bar\varepsilon}^*|$. The same
  argument works for $C = \mathcal{H}_{\text{-}\bar\varepsilon}$.

  We now prove (ii). As there are finitely many choices of $\bar\varepsilon\in\{-1,1\}^\mathcal{H}$
  it suffices to prove it by fixing one of them. Observe the sequence
  $((\mathcal{H}_{\bar\varepsilon,k}^{\mathrm{succ}})^*)_k$ is decreasing so we can define
  $(\mathcal{H}_{\bar\varepsilon,\infty}^\mathrm{succ})^* =
  \cap_k(\mathcal{H}_{\bar\varepsilon,k}^{\mathrm{succ}})^* = \mathcal{H}_{\bar\varepsilon}^*\cap A
  = \cap_k((\mathcal{H}_{\bar\varepsilon,k}^{\mathrm{succ}})^*\cap A)$. For one we obtain
  $\lim_k|(\mathcal{H}_{\bar\varepsilon,k}^{\mathrm{succ}})^*| =
  |\mathcal{H}_{\bar\varepsilon}^*\cap A | \geq (1-2\kappa)|\mathcal{H}_{\bar\varepsilon}^*\cap A |
  >0$. Since also
  $\lim_k|(\mathcal{H}_{\bar\varepsilon,k}^{\mathrm{succ}})^*\cap A| =
  |\mathcal{H}_{\bar\varepsilon}^*\cap A |$ we obtain
  $\lim_k(|\mathcal{H}_{\bar\varepsilon}^*|/|\mathcal{H}_{\bar\varepsilon}^*\cap A |) = 1$ which
  yields the desired conclusion.
\end{proof}

\section{Strategical reproducibility of the Haar system}
\label{sec:strat-repr-haar}

We establish that the Haar system is strategically reproducible in the following classical Banach
spaces:
\begin{enumerate}[(i)]
\item The multi-parameter tensor product Haar system is strategically reproducible in the reflexive
  mixed norm Lebesgue spaces $L^{(p_1,\ldots,p_d)}$, $1 < p_i < \infty$, $1 \leq i \leq d$,
  $d\in\mathbb{N}$, in the sense of Definition~\ref{D:1.1}.
\item The one-parameter Haar system in $H^1$ is strategically reproducible according to
  Definition~\ref{D:1.2}.
\item The two-parameter tensor product Haar system is strategically reproducible in the
  two-parameter mixed norm Hardy spaces $H^p(H^q)$, $1\leq p,q <\infty$ in the sense of
  Definition~\ref{D:1.2}.
\end{enumerate}

\subsection{The Haar system in multi-parameter Lebesgue spaces}\label{sec:haar-system-multi}

Here we show that $(h_{\bar I})$ is strategically reproducible in $L^{(p_1,\ldots,p_d)}$,
$1 < p_i < \infty$, $1 \leq i \leq d$, $d\in\mathbb{N}$, in the sense of Definition~\ref{D:1.1}.  We
exploit the fact that $(h_{\bar I})$ is an unconditional basis for $L^{(p_1,\ldots,p_d)}$, and that
$(L^{(p_1,\ldots,p_d)})^* = L^{(q_1,\ldots,q_d)}$, where $\frac{1}{p_i} + \frac{1}{q_i} = 1$,
$1\leq i \leq d$.

\begin{thm}
  Let $d\in\N$, $1<p_1,p_2, \ldots, p_d<\infty $ and put $\bar p=(p_1,p_2,\ldots,p_d)$. Then
  $(h_{\bar I})$ with an appropriate linear order is strategically reproducible in $L^{\bar p}_0$.
\end{thm}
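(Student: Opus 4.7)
Since each $p_i>1$ the space $L^{\bar p}_0$ is reflexive, and the $d$-parameter tensor Haar system is shrinking and unconditional (its unconditionality follows by iterating the Paley--Marcinkiewicz theorem, as exploited in Proposition~\ref{P:2.6}). We may therefore verify strategical reproducibility in the sense of Definition~\ref{D:1.1}. Enumerate $\mathcal{R}_d = \{\bar I_k : k \in \N\}$ via a linear order compatible with dyadic containment, so that the parent of any $\bar I\in\mathcal{R}_d$ strictly precedes $\bar I$ (the order in Section~\ref{sec:order-clpc} has this property for $d=2$, and an analogous order exists for general $d$). With such an order, any Haar function appearing as a factor in the expansion of a ``late, small'' dyadic rectangle will automatically have large index $\mathcal{O}$.

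Player~(II)'s winning strategy is a multi-parameter Gamlen--Gaudet-style faithful substitution, assembled coordinatewise. In each coordinate $j \in \{1,\dots,d\}$, player~(II) maintains a faithful tree substitution $I \mapsto A_I^{(j)} \subset [0,1)$ built inductively: each $A_I^{(j)}$ is a finite union of dyadic intervals satisfying $A_{I^+}^{(j)} \sqcup A_{I^-}^{(j)} = A_I^{(j)}$ and $|A_{I^\pm}^{(j)}| = |A_I^{(j)}|/2$. This yields 1D faithful Haar functions $f^{(j)}_I = \chi_{A^{(j)}_{I^+}} - \chi_{A^{(j)}_{I^-}}$, and a $d$-parameter faithful Haar function
\begin{equation*}
f_{\bar I_k} = f^{(1)}_{I_{k,1}} \otimes \cdots \otimes f^{(d)}_{I_{k,d}}.
\end{equation*}
Player~(II) sets $b_k = \alpha_k f_{\bar I_k} \in L^{\bar p}_0$ and $b_k^* = \beta_k f_{\bar I_k} \in L^{\bar q}_0$ with $\alpha_k \beta_k \prod_j |A^{(j)}_{I_{k,j}}| = 1$ (ensuring $b_k^*(b_k)=1$) and otherwise normalized to match $h_{\bar I_k}$. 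Biorthogonality $b_k^*(b_l)=\delta_{k,l}$ for $k \neq l$ follows from the $L^2$-orthogonality of distinct $f_{\bar I_k}, f_{\bar I_l}$, which in turn holds because $\bar I_k \neq \bar I_l$ forces some coordinate $j$ where $I_{k,j} \neq I_{l,j}$, and the 1D tree structure makes $f^{(j)}_{I_{k,j}}$ and $f^{(j)}_{I_{l,j}}$ orthogonal.

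The crucial flexibility used to meet player~(I)'s adaptively chosen $n_k$ is the following: when player~(II) commits to a coordinate split $A_{I_{k,j}^+}^{(j)} \sqcup A_{I_{k,j}^-}^{(j)} = A_{I_{k,j}}^{(j)}$, the existing dyadic-interval representation of $A_{I_{k,j}}^{(j)}$ may be refined into arbitrarily fine dyadic sub-intervals before the split. Player~(II) then distributes these fine sub-intervals between $A^{(j)}_{I^+_{k,j}}$ and $A^{(j)}_{I^-_{k,j}}$ in an interleaved fashion, pairing each sub-interval in the ``+'' part with a sibling in the ``$-$'' part so that their union is a common small dyadic parent. This makes each 1D Haar expansion of $f^{(j)}_{I_{k,j}}$ use only very short Haar functions, hence the tensor-product expansion of $f_{\bar I_k}$ uses only $h_{\bar J}$'s with $|\bar J|$ so small that $\mathcal{O}(\bar J) \geq n_k$. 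Since the refinement merely subdivides intervals without changing any $A_{I_{l,j}}^{(j)}$ with $l<k$, the earlier vectors $b_l$ and $b_l^*$ are unaffected.

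The main obstacle is verifying the impartial $C$-equivalences $(b_k) \sim (h_{\bar I_k})$ in $L^{\bar p}_0$ and $(b_k^*) \sim (h^*_{\bar I_k})$ in $L^{\bar q}_0$, with $C$ depending only on $\bar p$. By Proposition~\ref{P:2.6} this reduces to comparing the $d$-parameter square-function norms. Since $(f^{(j)}_I)^2 = \chi_{A^{(j)}_I}$, one has
\begin{equation*}
\Big( \sum_k c_k^2 f_{\bar I_k}^2 \Big)^{1/2} = \Big( \sum_k c_k^2 \prod_j \chi_{A^{(j)}_{I_{k,j}}} \Big)^{1/2}.
\end{equation*}
A coordinatewise change of variables sending each measurable set $A^{(j)}_{[0,1)}$ bijectively and in measure-preserving fashion (up to scale $\rho_j = |A^{(j)}_{[0,1)}|$) onto $[0,1)$ transforms the $L^{\bar p}$-norm of the right-hand side, up to the global constant $\prod_j \rho_j^{1/p_j}$, into $\trin \sum_k c_k h_{\bar I_k}\trin_{\bar p}$. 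By Proposition~\ref{P:2.6} again, this is equivalent to $\|\sum_k c_k h_{\bar I_k}\|_{L^{\bar p}_0}$. Absorbing the uniform constant into $\alpha_k$ yields the required impartial $C$-equivalence for $(b_k)$, and the entirely analogous argument in $L^{\bar q}_0$ (using the dual norm $\trin\cdot\trin_{\bar q}$) handles $(b_k^*)$. This completes the verification of Definition~\ref{D:1.1}.
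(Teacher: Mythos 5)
Your biorthogonality argument and the measure-transport computation of the square function are sound, and the latter is in the same spirit as the paper's use of Proposition~\ref{P:2.6}. The gap is in the game-theoretic adaptivity of the construction, and it is fatal for $d\geq 2$. Because you build the substitution \emph{coordinatewise}, the function $f^{(j)}_I=\chi_{A^{(j)}_{I^+}}-\chi_{A^{(j)}_{I^-}}$ is pinned down (split, fineness of its Haar expansion and all) at the \emph{first} turn at which $I$ occurs as the $j$-th coordinate of the current rectangle; it cannot be altered later without destroying the tree structure and hence the biorthogonality with the vectors already played. Now take $d=2$ and the rectangle $[0,1/2)\times[0,1/2)$. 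In the order of Section~\ref{sec:order-clpc} (and in any order you could reasonably use) this rectangle is preceded by both $[0,1/2)\times[0,1)$ and $[0,1)\times[0,1/2)$, so at its turn both $f^{(1)}_{[0,1/2)}$ and $f^{(2)}_{[0,1/2)}$ have already been committed. Hence $b_k=\alpha_k\, f^{(1)}_{[0,1/2)}\otimes f^{(2)}_{[0,1/2)}$ is completely determined \emph{before} player (I) makes the move $n_k$, and its Haar support sits at a fixed finite set of levels. Player (I) simply chooses $n_k$ larger than the $\drindex$-index of every rectangle at those levels, and then $b_k\notin\spa(h_{\bar J}:\drindex(\bar J)\geq n_k)$: your strategy loses. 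Your ``refine before splitting'' flexibility does not help here, since refining the dyadic representation of a set $A^{(j)}_I$ changes nothing once the split (and hence $f^{(j)}_I$) is fixed. Note that your count is off precisely because in $d\geq 2$ parameters each one-dimensional interval is reused as a coordinate of infinitely many rectangles, whereas for $d=1$ every interval is ``fresh'' at its own turn, which is why the Gamlen--Gaudet scheme works verbatim there.

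The paper circumvents this by \emph{not} tensoring once-and-for-all coordinate substitutions. At turn $l+1$ player (II) chooses a completely fresh level vector $(k(l+1,1),\dots,k(l+1,d))$, subject only to $k(l+1,j)\geq k(m,j)$ for $m\leq l$ with strict inequality when $I^{(l+1)}_j\subsetneq I^{(m)}_j$, and sets $b_{l+1}=\sum h_{\bar J}$ over \emph{all} $\bar J$ at that level vector contained in $\bar I^{(l+1)}$. Since every coordinate level is chosen anew at every turn, it can always be taken large enough that $n(\bar J)\geq n_{l+1}$ for all $\bar J$ involved. Biorthogonality is then obtained from the level-monotonicity conditions (two distinct non-disjoint rectangles must, by compatibility of the enumeration with containment, differ by strict containment in some coordinate in the direction that forces distinct levels), rather than from a nested tree of supports; and the equivalence is immediate because $|b_j|=|h_{\bar I^{(j)}}|$ pointwise, so the $d$-parameter square functions of $\sum\xi_jb_j$ and $\sum\xi_jh_{\bar I^{(j)}}$ coincide and Proposition~\ref{P:2.6} applies. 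If you want to keep a genuinely Gamlen--Gaudet flavour in several parameters, compare the paper's $H^p(H^q)$ argument, where the space is first split into the parts $|I|<|J|$ and $|I|\geq|J|$ precisely so that there is always a designated coordinate that can be refined afresh at every turn.
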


\begin{proof}
  We linearly order $\mathcal{R}_d$ into
  $(\bar I^{(k)})=(I^{(k)}_1,I^{(k)}_2, \ldots, I^{(k)}_d)_{k=1}^\infty$ in a manner which is
  compatible with ``$\subset$'', \ie, we assume that if for $m,k\kin\N$, we have
  $I^{(m)}_i\subset I^{(k)}_i$, for $i=1,2,\ldots,d$, then $m\ge k$. We also linearly order the Haar
  basis of $L^{\bar p}_0$ into $(h_{\bar I^{(k)}})_{k=1}^\infty$.  For any
  $\bar I\in \mathcal{R}_d$, $n(\bar I)$ denotes the number $n\in\N$ so that $\bar I= \bar I^{(n)}$.

  Then a winning strategy for player (II) will look as follows:

  Assume he has chosen $b_1,b_2,\ldots b_l$, and $b^*_1,b^*_2,\ldots b_l^*$ in $L^{\bar p}_0$ and
  $(L^{\bar p}_0)^*$. Assume that $b_j$, $1\le j\le l$ is of the following form:
  \begin{equation*}
    b_j= \sum_{
      \begin{matrix}
        \scriptstyle \bar J=(J_1,\ldots J_d)\in \cD_{k(j,1)}\times \cD_{k(j,2)}\times \ldots \times \cD_{k(j,d)}\\
        \scriptstyle J_1\times J_2\times \ldots \times J_d\subset I^{(j)}_1\times
        I^{(j)}_2\times\ldots I^{(j)}_d
      \end{matrix}
    } h_{\bar J}
  \end{equation*}
  with $k(j,i)\le k(j',i)$, if $j\le j'$, and $k(j,i)< k(j',i)$ if $I^{(j)}_i\supsetneq I^{(j')}_i$,
  for $i=1,2\ldots d$. We also assume that for all $j=1,2\ldots l$ we have $n(\bar J)>n_j$ for all
  $\bar J\in \cD_{k(j,2)}\times \ldots \times \cD_{k(j,d)}$, where $n_j$ was the $j$-th move of
  player (I) and, moreover, we assume that
  \begin{equation*}
    b^*_j=\Big(\prod_{s=1}^d |I_s^{(j)}|\Big)^{-1} b_j.
  \end{equation*}
  Thus $(b^*_j)_{j=1}^l$ is biorthogonal to $(b_j)_{j=1}^l$ and $|b_j|=|h_{\bar I^{(j)}}|$, for
  $j=1,2\ldots l$ which means that with respect to $\trin\cdot\trin_{\bar p}$ and, using
  \eqref{E:2.6.1} in Proposition \ref{P:2.6}, $(b_j)_{j=1}^l$ and $(b^*_j)_{j=1}^l$ are
  isometrically equivalent to $(h_{\bar I_j})_{j=1}^l$ and $(h^*_{\bar I_j})_{j=1}^l$, respectively.

  Assuming now the $(l+1)$st move of player (I) is $n_{l+1}$, player (II) can proceed as follows.
  For $j=1,2\ldots d$, he chooses $k(l+1,j)\ge \max_{m\le l} k(m,j)$ so that $k(l+1,j)>k(m,j)$ if
  $I^{(l+1)}_j \subsetneq I^{(m)}_j$ and so that for all $\bar J\in \prod_{j=1}^d \cD_{k(l+1,j)}$,
  we have $n(\bar J)\ge n_{l+1}$.
\end{proof}

\subsection{The Haar system in $H^1$}
\label{sec:h1}

Here, we use the Gamlen-Gaudet construction~\cite{gamlen:gaudet:1973} (see
also~\cite{1987:mueller,mueller:2005}) to show that the one-parameter Haar system in $H^1$ is
strategically reproducible according to Definition~\ref{D:1.2}.

For convenience, we introduce the following notation: We define $e_I = h_I/|I|$, $I\in\mathcal{D}$,
thus $(e_I)_{I\in\mathcal{D}}$ forms a $1$-unconditional normalized basis for $H^1$. Note that
$e_I^* = h_I\in (H^1)^*$, $I\in\mathcal{D}$.  Finally, we will identify a dyadic interval
$I\in\mathcal{D}$ with $\mathcal{O}(I)$, e.g.
\begin{equation*}
  \mathcal{E}_k \leftrightarrow \mathcal{E}_I,
  \qquad x_k \leftrightarrow x_I,
  \qquad x_k^* \leftrightarrow x_I^*,
  \qquad I\in\mathcal{D},\ \mathcal{O}(I) = k.
\end{equation*}
Recall that the linear order $\mathcal{O}$ was introduced in Section~\ref{sec:multi-parameter-haar}.

\begin{thm}\label{thm:H1-strat-rep}
  The normalized Haar basis is strategically reproducible in $H^1$.
\end{thm}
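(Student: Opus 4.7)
The plan is to exhibit a winning strategy for player~(II) in which each $E_k$ is a singleton: we inductively build a tree-order-preserving injection $I \mapsto I'$ from $\mathcal{D}$ into itself---meaning $I\subset J$ iff $I'\subset J'$, and $I\subset J^+$ iff $I'\subset (J')^+$---and at turn $k$ we set $E_k = \{\mathcal{O}(I_k')\}$ and $\lambda_{I_k'} = \mu_{I_k'} = 1$, satisfying trivially the biorthogonality requirement $\sum_i \lambda_i^{(k)}\mu_i^{(k)} = 1$. After player~(I) chooses a sign $\varepsilon_k\in\{-1,1\}$, the result is $x_k = \varepsilon_k e_{I_k'}$ and $x_k^* = \varepsilon_k h_{I_k'}$, both normalized in $H^1$ and in $(H^1)^* = \mathrm{BMO}$ respectively.

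At turn $k$, given $\eta_k$, $W_k$, and $G_k$, we choose $I_k'$ inside the appropriate half of the parent interval $I_{P(k)}'$. Let $\phi_1,\dots,\phi_N$ be a basis of the finite-dimensional annihilator $W_k^\perp \subset \mathrm{BMO}$, and let $y_1,\dots,y_M \in H^1$ be a finite set whose annihilator in $\mathrm{BMO}$ equals $G_k$ (which exists since $G_k$ is $w^*$-closed and of finite codimension). Because $\mathrm{BMO}\subset L^1_{\mathrm{loc}}$, Lebesgue's differentiation theorem ensures that the joint Lebesgue points of $\phi_1,\dots,\phi_N$ form a full-measure subset of $[0,1)$, hence meet the positive-measure set $(I_{P(k)}')^\pm$ in a full-measure subset. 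Around any such point, sufficiently small dyadic intervals have $\mathrm{osc}(\phi_j; I_k')$ arbitrarily small for every $j$; simultaneously, by absolute continuity of the integrals of $y_j\in L^1$, shrinking $|I_k'|$ makes $\|y_j\chi_{I_k'}\|_{L^1}$ arbitrarily small for every $j$. Player~(II) thus selects $I_k'$ satisfying $\mathrm{osc}(\phi_j;I_k')<\eta_k/N$ and $\|y_j\chi_{I_k'}\|_{L^1}<\eta_k/(2M)$ for the relevant indices.

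Conditions~(i) and~(ii) of Definition~\ref{D:1.2} follow by combining the $1$-unconditionality of the Haar system in $H^1$ and in $\mathrm{BMO}$ (both norms depending only on the absolute values of Haar coefficients, via the square-function and Carleson-measure characterizations, respectively) with the Gamlen-Gaudet theorem \cite{gamlen:gaudet:1973,1987:mueller,mueller:2005}: any tree-order-preserving reindexing $I \mapsto I'$ produces $(e_{I'})_I$ impartially $C$-equivalent to $(e_I)_I$ in $H^1$, and dually $(h_{I'})_I$ impartially $C$-equivalent to $(h_I)_I$ in $\mathrm{BMO}$, with a universal constant $C$. Condition~(iii) holds because $x_k$ is, up to sign, a normalized $(1,\infty)$-atom supported on $I_k'$, yielding $|\phi_j(x_k)| = \bigl|\int (\phi_j - (\phi_j)_{I_k'}) x_k\bigr|\le \mathrm{osc}(\phi_j; I_k')$; summing over $j$ and using equivalence of any two norms on the finite-dimensional $W_k^\perp$ delivers $\dist(x_k, W_k) < \eta_k$. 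Condition~(iv) holds because $|x_k^*(y_j)| = \bigl|\int y_j h_{I_k'}\bigr|\le\|y_j\chi_{I_k'}\|_{L^1}$, again summed over $j$ and combined with finite-dimensional norm equivalence on $\mathrm{span}(y_1,\dots,y_M)$.

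The principal obstacle is to verify that the Gamlen-Gaudet equivalence constant remains universal---independent of the particular tree-preserving reindexing, which is itself built on the fly during the game. This is classical for $H^1$, proceeding via the square-function norm together with the atomic characterization as in \cite{mueller:2005}, but the constant tracking must be done with care. A secondary concern is reconciling the two smallness conditions on the $\phi_j$ and the $y_j$ with the geometric constraint $I_k' \subset (I_{P(k)}')^{\pm}$; this is ultimately handled by the fact that joint Lebesgue points of finitely many BMO functions form a full-measure set, and by absolute continuity of $L^1$ integrals, both of which survive restriction to a prescribed dyadic half.
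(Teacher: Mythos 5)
Your strategy rests on the claim that any tree-order-preserving injection $I \mapsto I'$ makes $(e_{I'})_{I}$ impartially $C$-equivalent to $(e_I)_I$ in $H^1$ (and dually in $\mathrm{BMO}$) with a \emph{universal} constant $C$. That claim is false, and this is a fatal gap rather than a constant-tracking nuisance. The Gamlen--Gaudet theorem, as used in \cite{gamlen:gaudet:1973,1987:mueller,mueller:2005} and in the paper (see condition~(c) of Lemma~\ref{middle stepone}), requires a \emph{measure compatibility} hypothesis, roughly $|\mathcal{H}_{I^\pm}^*|\approx \tfrac12|\mathcal{H}_I^*|$. In your singleton version $\mathcal{H}_I = \{I'\}$, this becomes $|(I^\pm)'|\approx |I'|/2$, which your strategy cannot guarantee: the distance conditions on $W_k$ and $G_k$ force you to shrink $|I_k'|$ by amounts controlled by player~(I), so the ratio $|(I^\pm)'|/|I'|$ can be pushed to $0$. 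Tree-order preservation alone does not suffice.

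To see the failure concretely, take the tree-order-preserving map with $|I'| = 4^{-n}$ whenever $|I| = 2^{-n}$ (e.g.\ $I'$ always the leftmost dyadic interval of size $4^{-n}$ inside the prescribed half of its parent's image). Put $a_I = |I|$ for $|I|\ge 2^{-N}$ and $a_I = 0$ otherwise. Then $\sum a_I e_I = \sum_{|I|\ge 2^{-N}} h_I$ has square function identically $\sqrt{N+1}$, so $\bigl\|\sum a_I e_I\bigr\|_{H^1} = \sqrt{N+1}$. On the other hand, $\sum a_I e_{I'} = \sum_{|I|\ge 2^{-N}} \bigl(|I|/|I'|\bigr) h_{I'}$ and, since $|I|^2/|I'|^2 = 4^n$, its square function at a point $y$ whose chain $\{I : y\in I'\}$ has depth $k$ is comparable to $2^{\min(k,N)}$. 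Summing over the annuli (the set where the depth is exactly $k$ has measure $2^{-k-1}$) gives $\bigl\|\sum a_I e_{I'}\bigr\|_{H^1}\approx N/2$. The ratio grows like $\sqrt{N}$, so condition~(i) of Definition~\ref{D:1.2} cannot hold with any universal constant for this family of reindexings. Equivalently, $\bigl\|\sum_{|I|\ge 2^{-N}} h_{I'}\bigr\|_{\mathrm{BMO}} = O(1)$ while $\bigl\|\sum_{|I|\ge 2^{-N}} h_{I}\bigr\|_{\mathrm{BMO}} \approx \sqrt{N}$, so the corresponding statement for $(x_k^*)_k$ fails as well.

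This is exactly why the paper's proof builds each $x_I$ as a sum over a \emph{collection} $\mathcal{E}_I$ of intervals at a fine level with $|\mathcal{E}_I^*| = |I|$ exactly (the Gamlen--Gaudet construction), and why Definition~\ref{D:1.2} allows player~(II) to choose general finite sets $E_k$ rather than singletons. The collections play the role of ``splitting a dyadic interval into small pieces without changing its total measure,'' which restores the measure compatibility that the singleton reindexing destroys. Your idea of using Lebesgue points of the $\mathrm{BMO}$-side annihilator functions and absolute continuity of the $L^1$-side test functions is workable for conditions~(iii) and~(iv), but it should be applied to the collections $\mathcal{E}_I$ (equivalently, to the weakly/w$^*$-null sequences $d_I^{(j)}$, $d_I^{*(j)}$ as in the paper), not to isolated Haar functions.
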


\begin{proof}
  Here, we show that $(e_I)_{I\in\mathcal{D}}$ is $\sqrt{2}$-strategically reproducible in $H^1$.

  We start the game with turn $1$.  In step $1$, player (I) chooses $\eta_{[0,1)}>0$,
  $W_{[0,1)}\in\mathrm{cof}(H^1)$, and $G_{[0,1)}\in\mathrm{cof}_{w^*}((H^1)^*)$.  In step $2$,
  player (II) selects one of the sets $\mathcal{E}_{[0,1)}^{(j)}= \mathcal{D}_j$, $j\in\mathbb{N}$.
  Put
  \begin{align*}
    d_{[0,1)}^{(j)}
    &= \sum_{K\in \mathcal{E}_{[0,1)}^{(j)}} |K| e_K,\\
    d_{[0,1)}^{*(j)}
    &= \sum_{K\in \mathcal{E}_{[0,1)}^{(j)}} e_K^*,
  \end{align*}
  and note that $(d_{[0,1)}^{(j)})_{j=1}^\infty$ converges to $0$ in the weak topology of $H^1$ and
  the sequence $(d_{[0,1)}^{*(j)})_{j=1}^\infty$ converges in the w$^*$ topology in $(H^1)^*$.
  Hence, there exists an index $j_0$ such that
  $\dist_{H^1}(d_{[0,1)}^{(j_0)}, W_{[0,1)}) < \eta_{[0,1)}$ and
  $\dist_{(H^1)^*}(d_{[0,1)}^{*(j_0)}, G_{[0,1)}) < \eta_{[0,1)}$.  Player (II) concludes step 2 by
  choosing
  \begin{equation*}
    \mathcal{E}_{[0,1)}
    = \mathcal{E}_{[0,1)}^{(j_0)}
  \end{equation*}
  and
  \begin{equation*}
    \lambda_K^{[0,1)}
    = |K|
    \quad\text{and}\quad
    \mu_K^{[0,1)}
    = 1,
    \qquad K\in \mathcal{E}_{[0,1)}.
  \end{equation*}
  In step $3$, player $(I)$ chooses
  $(\varepsilon_{K}^{([0,1))})_{K\in \mathcal{E}_{[0,1)}}\in\{-1,1\}^{\mathcal{E}_{[0,1)}}$.

  Assume that the game has already been played for $k=\mathcal{O}(I)-1$ turns.  We will now play out
  turn $k+1 = \mathcal{O}(I)$.  In step $1$, player (I) chooses $\eta_I > 0$,
  $W_I\in\mathrm{cof}(H^1)$, and $G_I\in\mathrm{cof}_{w^*}((H^1)^*)$.  In step $2$, it is player
  II's choice to select the finite sets $\mathcal{E}_I\subset\mathcal{D}$.  We will now describe
  this procedure.  Note that since $W_I\in\mathrm{cof}(H^1)$ and
  $G_I\in\mathrm{cof}_{w^*}((H^1)^*)$, there exist $f_j\in H^1$, $g_j\in (H^1)^*$,
  $1\leq j \leq N_I$, such that $W_I = \{g_1,\ldots,g_{N_I}\}_\perp$ and
  $G_I = \{f_1,\ldots,f_{N_I}\}^\perp$.  Let $\widetilde I$ denote the dyadic predecessor of $I$,
  i.e. $\widetilde I$ is the unique dyadic interval that satisfies $\widetilde I \supsetneq I$ and
  $|\widetilde I| = 2|I|$.  Note that $\mathcal{O}(\widetilde I)\leq k$; specifically,
  $\mathcal{E}_{\widetilde I}$ has already been defined.  Put
  \begin{equation*}
    X_I =
    \begin{cases}
      \{ K^\ell : K\in \mathcal{E}_{\widetilde I}\}, & \text{if $I$ is the left successor of $\widetilde I$},\\
      \{ K^r : K\in \mathcal{E}_{\widetilde I}\}, & \text{if $I$ is the right successor of
        $\widetilde I$},
    \end{cases}
  \end{equation*}
  where $K^\ell$ denotes the left successor of $K$ and $K^r$ denotes the right successor of $K$.  We
  note that by induction $|X_I| = |I|$.
  \begin{align*}
    d_I^{(j)}
    &= \sum_{\substack{K\in \mathcal{D}_j\\K\subset X_I}} |K| e_K,
    \qquad j\in\mathbb{N},\\
    d_I^{*(j)}
    &= \sum_{\substack{K\in \mathcal{D}_j\\K\subset X_I}} e_K^*,
    \qquad j\in\mathbb{N}.
  \end{align*}
  Since the sequence $(d_I^{(j)})_{j=1}^\infty$ converges to $0$ in the weak topology of $H^1$ and
  $(d_I^{^*(j)})_{j=1}^\infty$ converge to $0$ in the w$^*$ topology of $(H^1)^*$, there exists an
  index $j_0$ such that
  \begin{equation*}
    \supp(d_I^{(j_0)}) = X_I,
    \qquad
    2^{-j_0} < \min\Bigl\{ |K| : K\in \bigcup_{i=1}^k \mathcal{E}_i\Bigr\},
  \end{equation*}
  as well as
  \begin{equation*}
    \dist_{H^1}(d_I^{(j_0)}, W_I) < \eta_I,
    \qquad
    \dist_{(H^1)^*}(d_I^{*(j_0)}, G_I) < \eta_I.
  \end{equation*}
  Player (II) concludes step $2$ by choosing the collection
  \begin{equation*}
    \mathcal{E}_I = \{K\in\mathcal{D}_{j_0} : K\subset X_I\},
  \end{equation*}
  the numbers
  \begin{equation*}
    \lambda_K^{(I)} = |K|/|I|,\quad K\in \mathcal{E}_I,
    \qquad\text{and}\qquad
    \mu_K^{(I)} = 1,\quad K\in \mathcal{E}_I,
  \end{equation*}
  and defining
  \begin{equation*}
    x_I = d_I^{(j_0)}
    \qquad\text{and}\qquad
    x_I^* = d_I^{*(j_0)}.
  \end{equation*}
  Clearly, since $|X_I| = |\mathcal{E}_I^*| = |I|$, we have
  $\sum_{K\in \mathcal{E}_I} \lambda_K^{(I)}\mu_K^{(I)} = 1$.  We conclude turn
  $k+1 = \mathcal{O}(I)$ with player (I) choosing $(\varepsilon_K^{(I)})_{K\in \mathcal{E}_I}$ in
  $\{-1,1\}^{\mathcal{E}_I}$ in step 3.

  We claim that this defines a winning strategy for player (II).  In fact, $(x_I)_{I\in\mathcal{D}}$
  and $(x_I)_{I\in\mathcal{D}}^*$ were constructed using the Gamlen-Gaudet construction, for which
  it was verified in~\cite[Theorem~0]{1987:mueller} that $(x_I)_I$ is equivalent to
  $(e_I)_I\in\mathcal{D}$ in $H^1$ and $(x_I^*)_I$ is equivalent to $(e_I^*)_I\in\mathcal{D}$ in
  $(H^1)^*$ .  Moreover, we note that in the text above we already verified
  $\dist_{H^1}(x_I, W_I) < \eta_I$ and $\dist_{(H^1)^*}(x_I^*, G_I) < \eta_I$, $I\in\mathcal{D}$.
\end{proof}

\subsection{The Haar system in $H^p(H^q)$, $1\leq p,q <\infty$}
\label{sec:haar-system-h1}

In~\cite{laustsen:lechner:mueller:2015} it was shown that the two-parameter tensor product Haar
system in $H^p(H^q)$ has the factorization property.  We use the techniques introduced
in~\cite{laustsen:lechner:mueller:2015}, to show that, moreover, the two-parameter tensor product
Haar system is strategically reproducible $H^p(H^q)$.  Hence, by Theorem~\ref{thm:strat-rep:1} we
recover the main result in~\cite{laustsen:lechner:mueller:2015}.  Finally, we remark that by
exploiting Theorem~\ref{thm:sums:strat:rep:1} (see Section \ref{sec:uncond-sums-spac} below) we
obtain a simpler construction than the one used in~\cite{laustsen:lechner:mueller:2015}.

In the proof below, we will use following notation: The $H^p(H^q)$-normalized bi-parameter Haar
system $(e_I\otimes f_J)_{I,J}$ is given by $e_I\otimes f_J = h_I\otimes h_J/(|I|^{1/p}|J|^{1/q})$,
and its bi-orthogonal functionals $((e_I\otimes f_J)^*)_{I,J}$ are given by
$(e_I\otimes f_J)^* = e_I^*\otimes f_J^* = h_I\otimes h_J/(|I|^{1/p'}|J|^{1/q'})$, where
$1/p + 1/p' = 1$ and $1/q + 1/q' = 1$, with the usual convention that $1/\infty = 0$.

\begin{thm}\label{thm:HpHq:strat-rep}
  The normalized bi-parameter Haar system is strategically reproducible in the mixed norm Hardy
  spaces $H^p(H^q)$, $1 \leq p ,q \leq \infty$.
\end{thm}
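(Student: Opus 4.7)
The plan is to exhibit a winning strategy for Player~(II) in the game of Definition~\ref{D:1.2}, which is the applicable version of strategic reproducibility because the normalized bi-parameter Haar system $(e_I\otimes f_J)_{\bar I\in\mathcal R_2}$ is $1$-unconditional in $H^p(H^q)$: both the norm and its dual are governed by bi-parameter square functions, hence invariant under Haar sign changes. Enumerate $\mathcal R_2$ by $\drless$ as $(\bar I^{(k)})_{k=1}^\infty=(I_k\times J_k)_{k=1}^\infty$, and write $p',q'$ for the conjugate exponents of $p,q$ with the convention $1/\infty=0$.

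At turn $k$, given Player~(I)'s data $(\eta_k,W_k,G_k)$, Player~(II) selects a single integer $M_k$ which (i) is distinct from every previously chosen $M_{k'}$ with $k'<k$, (ii) satisfies $M_k\ge\max(-\log_2|I_k|,-\log_2|J_k|)$, and (iii) is large enough to secure the distance bounds below. He then declares
\begin{equation*}
E_k=\{\drindex(K\times L):K,L\in\mathcal D_{M_k},\,K\subset I_k,\,L\subset J_k\},
\end{equation*}
\begin{equation*}
\lambda^{(k)}_{K\times L}=\frac{|K|^{1/p}|L|^{1/q}}{|I_k|^{1/p}|J_k|^{1/q}},\qquad
\mu^{(k)}_{K\times L}=\frac{|K|^{1/p'}|L|^{1/q'}}{|I_k|^{1/p'}|J_k|^{1/q'}}.
\end{equation*}
Since $\sum_{K,L}|K||L|=|I_k||J_k|$, the identity $\sum_{K,L}\lambda^{(k)}_{K\times L}\mu^{(k)}_{K\times L}=1$ holds. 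After Player~(I) reveals signs $\varepsilon^{(k)}_{K\times L}$, the resulting vectors are
\begin{equation*}
x_k=\frac{1}{|I_k|^{1/p}|J_k|^{1/q}}\sum_{K,L}\varepsilon^{(k)}_{K\times L}\,h_K\otimes h_L,\qquad
x_k^*=\frac{1}{|I_k|^{1/p'}|J_k|^{1/q'}}\sum_{K,L}\varepsilon^{(k)}_{K\times L}\,h_K\otimes h_L,
\end{equation*}
supported on $A_k=\mathcal E^{(1)}_k\times\mathcal E^{(2)}_k$, where $\mathcal E^{(1)}_k=\{K\in\mathcal D_{M_k}:K\subset I_k\}$ and analogously $\mathcal E^{(2)}_k$.

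The crucial feature is condition (i): the distinct-level rule forces $A_k\cap A_{k'}=\emptyset$ for $k\neq k'$, since $\mathcal D_{M_k}\cap\mathcal D_{M_{k'}}=\emptyset$. Hence the bi-parameter Haar expansions of the $x_k$'s sit on pairwise disjoint Haar rectangles, and using $\cup_{K\times L\in A_k}(K\times L)=I_k\times J_k$ and $h_K^2=\chi_K$, the bi-parameter square function computation gives, for any scalars $(\xi_k)$,
\begin{equation*}
\mathbb S\Bigl(\sum_k\xi_k x_k\Bigr)^2=\sum_k\frac{\xi_k^2}{|I_k|^{2/p}|J_k|^{2/q}}\chi_{I_k\times J_k}=\mathbb S\Bigl(\sum_k\xi_k\,e_{I_k}\otimes f_{J_k}\Bigr)^2,
\end{equation*}
so $(x_k)$ is isometrically equivalent to the Haar basis in $H^p(H^q)$. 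The analogous dual square function computation combined with the dual-square-function characterization of $H^p(H^q)^*$ developed in \cite{laustsen:lechner:mueller:2015} (standard in the reflexive range $1<p,q<\infty$, and treated there for the endpoint cases) yields $C(p,q)$-equivalence of $(x_k^*)$ with the biorthogonal system.

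Finally, the distance bounds $\dist(x_k,W_k)<\eta_k$ and $\dist(x_k^*,G_k)<\eta_k$ are secured by choosing $M_k$ large enough, uniformly over the adversarial sign pattern. Writing $W_k=\{g_1,\dots,g_{N_k}\}_\perp$ and $G_k=\{f_1,\dots,f_{N_k}\}^\perp$, apply Lemma~\ref{lem:w-w*-convergence} with $\mathcal X_m=\{K\in\mathcal D_m:K\subset I_k\}$ and $\mathcal Y_m=\{L\in\mathcal D_m:L\subset J_k\}$, which satisfy $\mathcal X_m\cap\mathcal X_n=\emptyset$ for $m\neq n$ and $\bigcup\mathcal X_m=I_k$ (analogously for $\mathcal Y_m$). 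The lemma yields $\sup_{\gamma\in B_{\ell^\infty(\mathcal R_2)}}|\langle M_\gamma f_m,g\rangle|\to 0$ for any fixed $g$ in $H^p(H^q)^*$ or $H^p(H^q)$; absorbing the adversarial signs into $\gamma$, a single sufficiently large $M_k$ works simultaneously for every sign pattern and for all finitely many $g_i$ and $f_i$. The main (though mild) obstacle is the dual-side isometry step, which requires the dual square function characterization of $H^p(H^q)^*$ and, in the endpoint cases $p=1$ or $q=1$, the extension established in \cite{laustsen:lechner:mueller:2015}; combined with the distance bounds, this verifies all four conditions in Definition~\ref{D:1.2} and delivers the claimed strategic reproducibility.
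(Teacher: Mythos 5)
Your primal-side computation is correct and is a genuinely different (and slicker) route than the paper's: putting each $x_k$ on a single, previously unused level $M_k$ makes the bi-parameter square functions of $\sum_k\xi_kx_k$ and $\sum_k\xi_k e_{I_k}\otimes f_{J_k}$ coincide pointwise, so $(x_k)$ is isometrically equivalent to the Haar system in $H^p(H^q)$ for all $1\le p,q<\infty$, and your use of Lemma~\ref{lem:w-w*-convergence} for conditions (iii) and (iv) is fine. The gap is the dual side, and it is fatal exactly in the endpoint cases $p=1$ or $q=1$, which are the whole point of the theorem (for $1<p,q<\infty$ the dual is again a mixed Hardy space with a square-function norm and your argument closes). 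There is no ``dual square function characterization'' of $(H^1)^*$-type duals that is insensitive to where the supports sit: the relevant norm is $\sup_{\Omega}\bigl(|\Omega|^{-1}\sum_{R\subset\Omega}b_R^2|R|\bigr)^{1/2}$, and because each $x_k^*$ oscillates at a very fine scale on the \emph{whole} of $I_k\times J_k$, an ancestor $x_{k'}^*$ with $I_{k'}\times J_{k'}\supsetneq I_k\times J_k$ contributes fully to the local oscillation on $I_k\times J_k$, whereas $h_{I_{k'}}\otimes h_{J_{k'}}$ is constant there and contributes nothing. Concretely, take $p=q=1$, the chain $I_i=[0,2^{-i})$, $J_i=[0,1)$, $i=0,\dots,n$, and $\xi_i=1$. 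Then $\|\sum_i(e_{I_i}\otimes f_{J_i})^*\|_{(H^1(H^1))^*}\le\sqrt2$, but player (I) can force $M_i\ge n$ for all these rounds (choose $W_i$ to annihilate the finitely many $h^*_{K\times L}$ with $|K|,|L|\ge 2^{-(n-1)}$ and take $\eta_i$ tiny), and then testing on $\Omega=[0,2^{-n})\times[0,1)$ gives $|\Omega|^{-1}\sum_{R\subset\Omega}(\mu^{(i)}_R)^2|R|=n+1$ summed over $i$, so $\|\sum_ix_i^*\|\ge\sqrt{n+1}$. Hence $(x_k^*)$ is not impartially $C$-equivalent to $(e_k^*)$ for any $C$, and condition (ii) of Definition~\ref{D:1.2} fails.

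This failure is precisely what the Gamlen--Gaudet construction is built to avoid, and it is the route the paper takes: after splitting $H^p(H^q)=V_1\oplus V_2$ according to $|I|<|J|$ versus $|I|\ge|J|$ (so that the refinement can be driven by one coordinate and Theorem~\ref{thm:sums:strat:rep:1} reassembles the pieces), the collections $\cE_{I\times J}$ are chosen \emph{nested}, with the support of each child contained in the appropriate level set of its parent. Then ancestors are locally constant on the supports of their descendants, the martingale structure of the Haar system is reproduced, and both the primal and the dual equivalences hold, as verified in \cite{laustsen:lechner:mueller:2015} (and in \cite{1987:mueller} for one parameter). Your disjoint-levels shortcut cannot be repaired by tuning the $M_k$; the construction has to be rebuilt with nested supports.
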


\begin{myproof}
  Define the subspaces $V_1$, $V_2$ of $H^p(H^q)$ by
  \begin{equation*}
    V_1 = \overline{\spa\{ h_{I\times J} : |I| < |J|\}}^{H^p(H^q)}
    \quad\text{and}\quad
    V_2 = \overline{\spa\{ h_{I\times J} : |I| \geq |J|\}}^{H^p(H^q)},
  \end{equation*}
  and note that $V_1\oplus V_2 = H^p(H^q)$.  We will now show that the bi-parameter Haar system is
  strategically reproducible in $V_1$ and in $V_2$, separately.

  \begin{proofcase}[Case~$V_1$]
    Here, we will show that the bi-parameter Haar system is strategically reproducible in $V_1$.
    Player (I) opens the game by selecting $\eta_{[0,1/2)\times [0,1)} > 0$,
    $W_{[0,1/2)\times [0,1)}\in \cof(H^p(H^q))$ and
    $G_{[0,1/2)\times [0,1)}\in \cof_{w^*}((H^p(H^q))^*)$.  In step 2, player (II) will select one
    of the following collections of dyadic rectangles $\cE_{[0,1/2)\times[0,1)}^{(j)}$,
    $j\in\mathbb{N}$, given by
    \begin{equation*}
      \cE_{[0,1/2)\times[0,1)}^{(j)}
      = \{ K\times [0,1) : K\in\mathcal{D}_j,\ K\subset [0,1/2)\},
      \qquad j\in\mathbb{N} . 
    \end{equation*}
    To this end, define
    \begin{align*}
      d_{[0,1/2)\times[0,1)}^{(j)}
      &= \sum_{K\times L\in \cE_{[0,1/2)\times[0,1)}^{(j)}}
        \frac{|K|^{1/p} |L|^{1/q}}{|[0,1/2)|^{1/p} |[0,1)|^{1/q}} e_K\otimes f_L,
        \qquad j\in\mathbb{N},\\
      d_{[0,1/2)\times[0,1)}^{^*(j)}
      &= \sum_{K\times L\in \cE_{[0,1/2)\times[0,1)}^{(j)}}
        \frac{|K|^{1/p'} |L|^{1/q'}}{|[0,1/2)|^{1/p'}|[0,1)|^{1/q'}}
        e_K^*\otimes f_L^*,
        \qquad j\in\mathbb{N},
    \end{align*}
    and note that $K\times L\in \cE_{[0,1/2)\times[0,1)}^{(j)}$ implies $L = [0,1)$.  By
    Lemma~\ref{lem:w-w*-convergence} the sequence $(d_{[0,1/2)\times[0,1)}^{(j)})_{j=1}^\infty$ is a
    null sequence in the weak topology of $H^p(H^q)$ and
    $(d_{[0,1/2)\times[0,1)}^{*(j)})_{j=1}^\infty$ is a null sequence in the w$^*$ topology of
    $H^p(H^q)^*$, we can find $j_0$, such that
    \begin{align*}
      \dist_{H^p(H^q)}\big(d_{[0,1/2)\times[0,1)}^{(j_0)}, W_{[0,1/2)\times [0,1)}\big)
      &< \eta_{[0,1/2)\times [0,1)},\\
      \dist_{(H^p(H^q))^*}\big(d_{[0,1/2)\times[0,1)}^{*(j_0)}, G_{[0,1/2)\times [0,1)}\big)
      &< \eta_{[0,1/2)\times [0,1)}.
    \end{align*}
    Player II completes step 2 by selecting
    \begin{equation*}
      \cE_{[0,1/2)\times[0,1)}
      = \cE_{[0,1/2)\times[0,1)}^{(j_0)}
    \end{equation*}
    and the numbers
    \begin{align*}
      \lambda_{K\times L}^{[0,1/2)\times[0,1)}
      &= \frac{|K|^{1/p} |L|^{1/q}}{|[0,1/2)|^{1/p} |[0,1)|^{1/q}},
      &&K\times L \in \cE_{[0,1/2)\times[0,1)},\\
      \mu_{K\times L}^{[0,1/2)\times[0,1)}
      &= \frac{|K|^{1/p'} |L|^{1/q'}}{|[0,1/2)|^{1/p'} |[0,1)|^{1/q'}},
      &&K\times L \in \cE_{[0,1/2)\times[0,1)}.
    \end{align*}
    Finally, in step 3, player (I) chooses
    \begin{equation*}
      (\varepsilon_{K\times L}^{[0,1/2)\times[0,1)})_{K\times L \in \cE_{[0,1/2)\times[0,1)}}
      \in \{-1,+1\}^{\cE_{[0,1/2)\times[0,1)}},
    \end{equation*}
    completing turn 1.

    Assume that the turns $1,\ldots,k$ of the game have been played out.  We will now describe turn
    $k+1$.  Select $I, J\in\mathcal{D}$ such that $|I| < |J|$ and $I\times J$ is the
    $(k+1)^{\text{st}}$ rectangle of the set $\{ K\times L : |K| < |L|\}$ in the order $\drless$.
    Player I starts off the turn by choosing $\eta_{I\times J} > 0$,
    $W_{I\times J}\in \cof(H^p(H^q))$ and $G_{I\times J}\in \cof_{w^*}((H^p(H^q))^*)$.  In step 2,
    player (II) will select one of the collections $\cE_{I\times J}^{(j)}$, $j\in\mathbb{N}$ of
    dyadic rectangles, which we will now describe in detail.  We will distinguish between the
    following two cases: $J = [0,1)$ and $J\neq [0,1)$.

    If $J=[0,1)$, we note that $I\neq [0,1)$, hence, $\cE_{\widetilde I\times [0,1)}$ has already
    been defined.  Also note that $[0,|I|)\times [0,1)\drlesseq I\times [0,1)$, and choose the
    integer $\kappa(I\times [0,1))$ such that
    \begin{equation*}%\label{eq:case-1a-kappa}
      2^{-\kappa(I\times [0,1))}
      < \min\Bigl\{ |K| : K\times L\in\bigcup_{i=1}^k \cE_i \Bigr\}.
    \end{equation*}
    For all $j\in\mathbb{N}$, we define
    \begin{equation*}
      \cE_{I\times [0,1)}^{(j)}
      =
      \begin{cases}
        \{ K^+\times [0,1) : K\in\mathcal{D}_j,\ K\subset [0,1)\subset \cE_{\widetilde I\times
          [0,1)}\},
        &\text{if $I = (\widetilde I)^\ell$}\\
        \{ K^-\times [0,1) : K\in\mathcal{D}_j,\ K\subset [0,1)\subset \cE_{\widetilde I\times
          [0,1)}\}, &\text{if $I = (\widetilde I)^r$},
      \end{cases}
    \end{equation*}
    where $J^\ell$ denotes the left successor and $J^r$ the right successor of a dyadic interval
    $J\in\mathcal{D}$.  Define the functions
    \begin{align*}
      d_{I\times [0,1)}^{(j)}
      &= \sum_{K\times L\in \cE_{I\times [0,1)}^{(j)}}
        \frac{|K|^{1/p}|L|^{1/q}}{|I|^{1/p}|[0,1)|^{1/q}}
        e_K\otimes f_L,
        \qquad j\in\mathbb{N},\\
      d_{I\times [0,1)}^{*(j)}
      &= \sum_{K\times L\in \cE_{I\times [0,1)}^{(j)}}
        \frac{|K|^{1/p'}|L|^{1/q'}}{|I|^{1/p'}|[0,1)|^{1/q'}}
        e_K^*\otimes f_L^*,
        \qquad j\in\mathbb{N},
    \end{align*}
    and note that by Lemma~\ref{lem:w-w*-convergence} $(d_{I\times [0,1)}^{(j)})_{j=1}^\infty$
    converges to $0$ in the weak topology of $H^p(H^q)$ and
    $(d_{I\times [0,1)}^{*(j)})_{j=1}^\infty$ converges to $0$ in the w$^*$ topology of
    $(H^p(H^q))^*$.  Hence, there exists and index $j_0 > \kappa(I\times [0,1))$ such that
    \begin{align*}
      \dist_{H^p(H^q)}(d_{I\times[0,1)}^{(j_0)}, W_{I\times [0,1)})
      &< \eta_{I\times [0,1)},\\
      \dist_{(H^p(H^q))^*}(d_{I\times[0,1)}^{*(j_0)}, G_{I\times [0,1)})
      &< \eta_{I\times [0,1)}.
    \end{align*}
    Player II then completes step 2 by putting
    \begin{equation*}
      \cE_{I\times[0,1)}
      = \cE_{I\times[0,1)}^{(j_0)}
    \end{equation*}
    and selecting the numbers
    \begin{align*}
      \lambda_{K\times L}^{I\times[0,1)}
      &= |K|^{1/p}|L|^{1/q}/(|I|^{1/p}|[0,1)|^{1/q}),\\
      \mu_{K\times L}^{I\times[0,1)}
      &= |K|^{1/p'}|L|^{1/q'}/(|I|^{1/p'}|[0,1)|^{1/q'}).
    \end{align*}

    If $J\neq [0,1)$, then $\cE_{I\times [0,1)}$ has already been defined.  The collection
    $\cE_{I\times J}$ will be chosen as one of the following collections $\cE_{I\times J}^{(j)}$,
    $j\in\mathbb{N}$, which are given by
    \begin{equation*}
      \cE_{I\times J}^{(j)}
      = \{ K\times L : K\in\mathcal{D}_j,\ K\times L\subset \cE_{I\times [0,1)} \},
      \qquad j\in\mathbb{N}.
    \end{equation*}
    To this end, define the functions
    \begin{align*}
      d_{I\times J}^{(j)}
      &= \sum_{K\times L\in \cE_{I\times J}^{(j)}}
        \frac{|K|^{1/p}|L|^{1/q}}{|I|^{1/p}|J|^{1/q}}
        e_k\otimes f_L,
        \qquad j\in\mathbb{N},\\
      d_{I\times J}^{*(j)}
      &= \sum_{K\times L\in \cE_{I\times J}^{(j)}}
        \frac{|K|^{1/p'}|L|^{1/q'}}{|I|^{1/p'}|J|^{1/q'}}
        e_k^*\otimes f_L^*,
        \qquad j\in\mathbb{N},
    \end{align*}
    and note that by Lemma~\ref{lem:w-w*-convergence} $(d_{I\times J}^{(j)})_{j=1}^\infty$ converges
    to $0$ in the weak topology of $H^p(H^q)$ and that $(d_{I\times J}^{(j)})_{j=1}^\infty$
    converges to $0$ in the w$^*$ topology of $(H^p(H^q))^*$.  Consequently, we can find an index
    $j_0$ such that
    \begin{equation*}
      2^{-j_0}
      < \min\Bigl\{ |K| : K\times L \in \bigcup_{i=1}^k \cE_{i}\Bigr\},
    \end{equation*}
    and
    \begin{align*}
      \dist_{H^p(H^q)}(d_{I\times J}^{(j_0)}, W_{I\times  J})
      &< \eta_{I\times  J},\text{ and }
        \dist_{(H^p(H^q))^*}(d_{I\times J}^{*(j_0)}, G_{I\times  J})
        < \eta_{I\times  J}.
    \end{align*}
    Player II completes step 2 by selecting
    \begin{equation*}
      \cE_{I\times J}
      = \cE_{I\times J}^{(j_0)}
    \end{equation*}
    and the numbers
    \begin{equation*}
      \lambda_{K\times L}^{I\times J}
      = \frac{|K|^{1/p}|L|^{1/q}}{|I|^{1/p}|J|^{1/q}}
      \qquad\text{and}\qquad
      \mu_{K\times L}^{I\times J}
      = \frac{|K|^{1/p'}|L|^{1/q'}}{|I|^{1/p'}|J|^{1/q'}},
    \end{equation*}
    for all $K\times L \in \cE_{I\times J}$.
    
    Finally, in both cases ($J=[0,1)$ and $J\neq [0,1)$) player (I) completes step 3 (and thereby
    turn $(k+1)$) player (I) chooses
    \begin{equation*}
      (\varepsilon_{K\times L}^{I\times J})_{K\times L \in \cE_{I\times J}}
      \in \{-1,+1\}^{\cE_{I\times J}}.
    \end{equation*}
  \end{proofcase}

  \begin{proofcase}[Case~$V_2$]
    Follows from a completely parallel argument to Case~$V_1$.\qedhere
  \end{proofcase}

\end{myproof}

\section{The Haar system in $L^1$ is strategically reproducible}
\label{sec:haar-system-l1_0-1}
In this section we consider the space $L^1$ of all absolutely integrable functions on $[0,1]$
instead of $L^1_0$. If we additionally define $h_\emptyset = \chi_{[0,1)}$ and
$\mathcal{D}^+ = \mathcal{D}\cup\{\emptyset\}$. We call then $(h_I)_{I\in\mathcal{D}^+}$ is a
monotone Schauder basis of $L^1$, if ordered lexicographically (i.e., $\emptyset$ is the minimum of
$\mathcal{D}^+$ and the rest of the order is inherited from the lexicographical order of
$\mathcal{}D$). The reason we consider $L^1$ is that we can prove an isometric statement in this
setting and it is unclear whether this is possible for the space $L_0^1$. The purpose is to prove
that the normalized Haar system of $L^1$ is strategically reproducible, and thus has the
factorization property.  The main difficulty is proving the following statement.

\begin{thm}
  \label{L1 s-r}
  The normalized Haar system of $L^1$ is 1-strategically reproducible.
\end{thm}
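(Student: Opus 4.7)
The plan is to construct, for each $\eta > 0$, a winning strategy for player (II) in the game $\mathrm{Rep}_{L^1,(\tilde h_I)}(1,\eta)$ by building a Gamlen--Gaudet-type faithful Haar system $(x_k,x_k^*)$. The target object is a dyadic tree of measurable sets $(A_I)_{I\in\mathcal{D}^+}\subset[0,1)$ with $A_{I^+}\sqcup A_{I^-}=A_I$ and $|A_{I^+}|=|A_{I^-}|$, together with functions $x_I=(\chi_{A_{I^+}}-\chi_{A_{I^-}})/|A_I|$ (for $I\neq\emptyset$), $x_\emptyset=\chi_{A_\emptyset}/|A_\emptyset|$, and their $L^\infty$-biorthogonals $x_I^*=\chi_{A_{I^+}}-\chi_{A_{I^-}}$. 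Such a faithful system is isometrically equivalent in $L^1$ to $(\tilde h_I)_{I\in\mathcal{D}^+}$ (and similarly in $L^\infty$ for the biorthogonals); this delivers (i) and (ii) in the game with constant $1$, up to the $\eta$-slack coming from an approximate coverage. Biorthogonality $x_J^*(x_K)=\delta_{JK}$ is built into the construction.

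At turn $k$, having fixed the current target support $A_{I_k}$ (a union of dyadic intervals determined by player (I)'s earlier sign choices), player (II) selects a finite collection $\mathcal{H}_k$ of disjoint dyadic subintervals of $A_{I_k}$, all of equal length $\delta_k$ and with indices lying in $N_{i_k}$ for some $i_k\in\{1,2\}$. Setting $\lambda_J^{(k)}=1/|\mathcal{H}_k|$ and $\mu_J^{(k)}=1$ gives $\sum\lambda\mu=1$, and for any signs $(\epsilon_J)$ chosen by player (I) in Step 3 one has $x_k=(1/|\mathcal{H}_k|\delta_k)(\chi_{B^+}-\chi_{B^-})$ with $|B^\pm|=|\mathcal{H}_k|\delta_k/2$; the halves $B^\pm$ are then used to update the tree as $A_{I_k^\pm}:=B^\pm$, so the tree grows adaptively regardless of player (I)'s signs. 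The combinatorial heart is the choice of $i_k$ and $\mathcal{H}_k$: since each $x\in[0,1)$ has infinitely many dyadic ancestors in at least one of $\mathcal{A}_1,\mathcal{A}_2$, we have $\limsup\mathcal{A}_1\cup\limsup\mathcal{A}_2=[0,1)$; applying Lemma~\ref{assume finite} to pass to a finite-per-generation subfamily, and Lemma~\ref{eventual choices} to go deep enough inside $A_{I_k}\cap\limsup\mathcal{A}_{i_k}$, we obtain for any prescribed $\eta$-tolerance a valid $\mathcal{H}_k\subset\mathcal{A}_{i_k}$ covering $A_{I_k}$ up to measure $\eta$.

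Conditions (iii) and (iv) are forced by taking $\delta_k$ small. Since $W_k=\{g_1,\ldots,g_n\}_\perp$ for finitely many $g_i\in L^\infty$, we have
\[
\int g_i\,x_k=\frac{1}{|\mathcal{H}_k|\delta_k}\sum_{J\in\mathcal{H}_k}\epsilon_J\int g_i h_J,
\]
and by Lebesgue differentiation, $(1/|J|)|\int g_i h_J|\to 0$ off a set of arbitrarily small measure; choosing $\mathcal{H}_k$ at a sufficiently deep level and avoiding the exceptional set (via Lemma~\ref{eventual choices}) makes $|\int g_i\,x_k|<\eta_k$ uniformly in signs. The symmetric argument with $f_j\in L^1$ and martingale-difference convergence $\int h_J f_j=o(|J|)$ a.e.\ handles (iv).

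The main obstacle is the first turn, where $e_1=\tilde h_\emptyset=\chi_{[0,1)}$ is not a Haar function but the mean-one constant. A straight Gamlen--Gaudet copy $x_1=\chi_{A_\emptyset}/|A_\emptyset|$ has $\int x_1=1$, so (iii) against adversarial $W_1$ is delicate; the resolution is to take $\mathcal{H}_1$ to include $\emptyset$ with a very small $\lambda_\emptyset$ and to use a deep refinement $\mathcal{H}_1\setminus\{\emptyset\}\subset\mathcal{A}_{i_1}$ that is $\eta$-approximately balanced against the finitely many functionals defining $W_1$, thereby shifting the weight of $x_1$ onto deep mean-zero Haar combinations while retaining the small mean-one component needed to reproduce the $\max$-type norm structure of the $L^1$ Haar basis. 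Orchestrating this initial step so that the subsequent faithful tree construction still yields the isometric equivalence (i), (ii) is the technically hardest point and uses in an essential way the flexibility in the combinatorial lemmas together with the fact that only finitely many linear constraints are imposed by $W_1,G_1$ at turn~$1$.
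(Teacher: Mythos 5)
Your overall architecture matches the paper's: reduce to a Gamlen--Gaudet faithful Haar system, use $\limsup\mathcal{A}_1\cup\limsup\mathcal{A}_2=[0,1)$ together with Lemma~\ref{assume finite} and Lemma~\ref{eventual choices} to select the collections, grow the tree adaptively after player (I)'s signs, and get (iii), (iv) by going deep. But the step you yourself flag as the hardest --- reproducing $e_1=\chi_{[0,1)}$ --- is where the proposal has a genuine gap, and your proposed fix does not work. First, including the index of $h_\emptyset$ in $E_1$ is not in general permitted: Step 2 of the game forces $E_1\subset N_{i_1}$ for a single $i_1$, and player (I) may place the index of $h_\emptyset$ in the part of the partition whose $\limsup$ is null (e.g.\ $\mathcal{A}_2=\{\emptyset\}$), so you cannot combine $\emptyset$ with any useful deep family, and $E_1=\{\emptyset\}$ alone makes $\dist(x_1,W_1)$ bounded below by $1$ for $W_1=\ker(f\mapsto\int f)$. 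Second, even when it is permitted, a ``very small $\lambda_\emptyset$'' cannot do the job you assign to it: with $\lambda_\emptyset$ tiny, $x_1$ is within $\eta$ of a mean-zero function, and no small mean-one perturbation explains why $(x_1,x_2,\dots)$ should be $(1+\eta)$-equivalent to $(\chi_{[0,1)},h_{[0,1)}/|[0,1)|,\dots)$; the sentence ``retaining the small mean-one component needed to reproduce the max-type norm structure'' asserts exactly the point that needs a proof, and the proposal ends by deferring it.

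The missing idea (which is how the paper resolves this) is that $x_1$ should contain \emph{no} $h_\emptyset$-component at all: take $x_1=\sum_{L\in\mathcal{H}_\emptyset}\varepsilon_L\lambda_L h_L$ with $\lambda_L=|L|/|\mathcal{H}_\emptyset^*|$, a purely mean-zero block, and observe that pointwise multiplication by the unimodular function $b_\emptyset=\sum_{L\in\mathcal{H}_\emptyset}\varepsilon_L h_L$ is an isometry of both $L^1$ and $L^\infty$ on functions supported in $\mathcal{H}_\emptyset^*$. This multiplication turns $x_1$ into $\chi_{\mathcal{H}_\emptyset^*}/|\mathcal{H}_\emptyset^*|$ and turns each later $x_I$ into another signed Haar block $\tilde x_I$ (with flipped signs $\tilde\varepsilon_L^I=\varepsilon_L^I\cdot\mathrm{sgn}\,b_\emptyset|_L$), and it is the \emph{twisted} system $(\tilde x_I)$ that is a genuine faithful Haar system to which Lemma~\ref{middle stepone} applies; the tree must accordingly be grown along the twisted signs $\tilde\varepsilon$, not the raw signs player (I) plays. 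A secondary problem: you require all intervals of $\mathcal{H}_k$ to have a common length $\delta_k$ with equal weights $1/|\mathcal{H}_k|$, but the intervals must be drawn from the adversarial family $\mathcal{A}_{i_k}$, and no single dyadic level of $\mathcal{A}_{i_k}$ need cover a substantial part of $A_{I_k}$ (the scales available to $\mathcal{A}_{i_k}$ can vary from point to point). One must instead use a full generation $\mathscr{G}_{m}(\mathcal{A})$ of mixed lengths and take $\lambda_L$ proportional to $|L|$, which is also what makes $\sum_L\lambda_L\mu_L=|\mathcal{H}_I^*|/(|\mathcal{H}_\emptyset^*||I|)$ land within $\eta$ of $1$.
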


The proof of the above will be presented at it in its own Subsection~\ref{sec:proof-theorem-refl1}.
We will also need the following statement.

\begin{prop}
  \label{L1 d-f}
  The normalized Haar system of $L^1$ has the $1/\delta$-diagonal factorization property.
\end{prop}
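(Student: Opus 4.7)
The plan is to prove, for each $\eta>0$, that the identity $(1/\delta+\eta)$-factors through $T$ by constructing an almost-isometric embedding $A\colon L^1\to L^1$ together with a left inverse $B$, via a Gamlen-Gaudet style reproducibility argument adapted to the diagonal structure of $T$. The overarching goal is to locate an almost-isometric copy of $L^1$ inside $L^1$ on which $T$ acts, up to a small error, as scalar multiplication by $\delta$.

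Setting $d_J=e_J^*(Te_J)$, so $|d_J|\ge\delta$, I would first replace $\delta$ by $\inf_J|d_J|$ (which can only improve the bound) and assume $\inf_J|d_J|=\delta$. Fix $\eta>0$. The core combinatorial step is to construct a family $(\cB_I)_{I\in\cD^+}$ of pairwise disjoint finite collections of dyadic intervals, organised into a Gamlen-Gaudet tree (so that $\cB_{I^\ell}^*\sqcup\cB_{I^r}^*=\cB_I^*$ in a measure-balanced way), with every $J\in\bigcup_I\cB_I$ lying in $\cC_\eta:=\{J\in\cD:|d_J|\le\delta(1+\eta)\}$. Exploiting the boundedness of $T$ on $L^1$ (which constrains the distribution of the $d_J$'s) together with $\inf|d_J|=\delta$, this is a stopping-time construction in the dyadic tree; the level partitions $\mathscr{G}_n$ and the $\limsup$ operation from Notation~\ref{level partition}, together with the density estimates of Lemmas~\ref{assume finite} and~\ref{eventual choices}, should supply the machinery needed to control missing mass at every level of the recursion.

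Granting such a tree, I would define $A(e_I)=\delta\sum_{J\in\cB_I}(1/d_J)h_J$, so that $TA(e_I)=\delta\sum_{J\in\cB_I}h_J=\delta v_I$, where $(v_I)$ is the standard Gamlen-Gaudet family. By the Gamlen-Gaudet theorem in $L^1$, $(v_I)$ is isometrically equivalent to the normalised Haar basis $(e_I)$, spans a subspace $Y$ that is contractively complemented via the Gamlen-Gaudet projection $P$ of norm one, and the map $\phi\colon L^1\to Y$, $\phi(e_I)=v_I$, is an isometric embedding. Setting $B=(1/\delta)\,\phi^{-1}\circ P$ gives $\|B\|\le 1/\delta$ and $BTA=\phi^{-1}P\phi=I$. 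To estimate $\|A\|$, note that $|\delta/d_J|\in[1/(1+\eta),1]$; a careful comparison between $A$ and a suitable \emph{signed} Gamlen-Gaudet map---realised by appropriate left/right swap choices in the tree recursion, which leave the isometric equivalence intact while absorbing the signs of the $d_J$'s---should give $\|A\|\le 1+\eta$, whence $\|A\|\,\|B\|\le (1+\eta)/\delta = 1/\delta + \eta/\delta$.

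The main obstacle is the combinatorial construction of the Gamlen-Gaudet tree inside $\cC_\eta$: although $\inf|d_J|=\delta$ ensures $\cC_\eta$ is nonempty, arranging its elements into a tree whose collections partition their parents at every level with vanishing missing mass is delicate, and is where the boundedness of $T$ on $L^1$ must be used in an essential way. A secondary delicate point is the estimate $\|A\|\le 1+\eta$: the symbolic bound $|\delta/d_J|\le 1$ is only suggestive, since Haar multipliers on $L^1$ with such bounded symbols are not in general contractive, so one must exploit both the sign-absorbing flexibility of the Gamlen-Gaudet recursion and the magnitude constraint $|d_J|\in[\delta,\delta(1+\eta)]$ on the tree to obtain the desired operator-norm bound.
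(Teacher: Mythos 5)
Your target statement and constant are right, and you have correctly restricted attention to diagonal operators (Haar multipliers), but the step you yourself flag as ``the main obstacle'' is a genuine obstruction, not a technicality: a Gamlen--Gaudet tree need not exist inside $\cC_\eta=\{J\in\cD:|d_J|\le\delta(1+\eta)\}$, and the boundedness of $T$ does not come to the rescue. Take $d_J=2\delta$ for every $J\in\cD^+$ except $d_{[0,2^{-n})}=\delta$ for $n\ge1$. Along any chain the coefficients are a block of $\delta$'s followed by a block of $2\delta$'s, so the chain variation is at most $\delta$ and, by the Semenov--Uksusov criterion \eqref{diagonal norm}, this is a bounded Haar multiplier on $L^1$ with $\inf_J|d_J|=\delta$. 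Yet for $\eta<1$ the set $\cC_\eta$ is the single chain $\{[0,2^{-n}):n\ge1\}$: any two of its members are nested, so already at the first branching of the recursion you cannot produce two nonempty subcollections with disjoint unions, and condition (a) of the Gamlen--Gaudet scheme (Lemma~\ref{middle stepone}) fails. What boundedness of a Haar multiplier on $L^1$ controls is the \emph{variation} of $(d_J)$ along chains, not the location or the measure-density of the near-infimal values. Your secondary worry is also fatal as stated: even granted the tree, confining the symbol $\delta/d_J$ to $[1/(1+\eta),1]$ in modulus and absorbing signs does not give $\|A\|\le1+\eta$, because the $L^1$-norm of a multiplier is again governed by chain variation, and coefficients oscillating inside an interval of length $\eta/(1+\eta)$ can accumulate variation of order $n\eta$ along a chain of length $n$; the bound you would actually get this way depends on $\|T\|$, which is precisely what the statement is designed to avoid.

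The paper's proof turns the Semenov--Uksusov inequality around and aims at near-\emph{constancy} rather than near-infimality: since $\|D\|_W<\infty$, there is a dyadic interval $I_0$ below which the total variation of $(d_J)$ along every chain is at most $\varepsilon/4$, whence $\|(D-d_{I_0}I)|_{Y_{I_0}}\|\le\varepsilon$ for $Y_{I_0}=[h_J:J\subset I_0]$ (Lemma~\ref{stabiliziation of diagonal}). No selection of a subtree is then needed: $Y_{I_0}$ itself contains an isometric, $1$-complemented copy $Z$ of $L^1$, obtained by dilation and antisymmetrization (Lemma~\ref{1-comp}), and with $A:L^1\to Z$ the isometry, $P$ the norm-one projection and $B=(1/d_{I_0})A^{-1}P$ one gets $\|BDA-I\|$ small and concludes by a Neumann-series perturbation with constant $(1+\varepsilon)/|d_{I_0}|\le(1+\varepsilon)/\delta$. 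If you want to keep the spirit of your construction, the fix is to replace ``intervals where $|d_J|$ is close to $\inf|d_J|$'' by ``a region where $d_J$ is close to a single constant of modulus at least $\delta$''; that is the property boundedness actually supplies, and it simultaneously removes the need for the tree selection and for the unjustified multiplier-norm estimate.
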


We will give the proof of Proposition~\ref{L1 d-f} in Subsection~\ref{L1 d-f subsection}.  For the
time being, we use the above two results to prove the following Corollary.
\begin{cor}
  \label{L1 fact prop}
  The normalized Haar system of $L^1$ has the $1/\delta$-factorization property.
\end{cor}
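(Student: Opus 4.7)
The plan is to obtain this corollary as an immediate consequence of the general machinery developed in Section~\ref{sec:strat-repr-cond}, namely Theorem~\ref{thm:strat-rep:1}, combined with the two results stated just above: Theorem~\ref{L1 s-r} (the Haar system is $1$-strategically reproducible in $L^1$) and Proposition~\ref{L1 d-f} (the Haar system has the $1/\delta$-diagonal factorization property in $L^1$). Since all of the heavy lifting is absorbed into those two statements, the corollary should be essentially a one-line verification of hypotheses.

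First I would record that the normalized Haar system of $L^1$, under the lexicographic order on $\mathcal{D}^+$ described at the start of the section, is a monotone Schauder basis; in particular its basis constant is $\lambda = 1$. Next I would invoke Theorem~\ref{thm:strat-rep:1}, which asserts that if a basis $(e_i)$ of a Banach space $X$ with basis constant $\lambda$ has the $K(\delta)$-diagonal factorization property and is $C$-strategically reproducible, then $(e_i)$ has the $\lambda C^2 K(\delta)$-factorization property.

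Plugging in our parameters, we take $C = 1$ from Theorem~\ref{L1 s-r}, $K(\delta) = 1/\delta$ from Proposition~\ref{L1 d-f}, and $\lambda = 1$ from monotonicity. The conclusion of Theorem~\ref{thm:strat-rep:1} then reads
\begin{equation*}
\lambda C^2 K(\delta) = 1 \cdot 1^2 \cdot \frac{1}{\delta} = \frac{1}{\delta},
\end{equation*}
which is exactly the claimed $1/\delta$-factorization property. No additional argument is needed once the two ingredients are in place.

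There is no real obstacle at this step; the only minor point of care is making sure the game-theoretic framework used in Theorem~\ref{thm:strat-rep:1} applies to a basis that is not unconditional (the Haar basis of $L^1$ is conditional), which is precisely why Definition~\ref{D:1.3}, allowing the partition $\N = N_1 \cup N_2$ and arbitrary signs, is the correct notion here. The diagonal factorization hypothesis is handled separately via Proposition~\ref{L1 d-f}, which replaces the automatic unconditionality argument that would normally give the $K(\delta)$-diagonal factorization property for free. Thus the genuine work lies in the two preceding statements, and the corollary itself is a clean application of Theorem~\ref{thm:strat-rep:1}.
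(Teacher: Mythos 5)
Your proposal is correct and matches the paper's own proof exactly: both apply Theorem~\ref{thm:strat-rep:1} with $\lambda=1$ (monotonicity of the Haar basis), $C=1$ from Theorem~\ref{L1 s-r}, and $K(\delta)=1/\delta$ from Proposition~\ref{L1 d-f}, yielding the constant $\lambda C^2K(\delta)=1/\delta$. Your remark about why Definition~\ref{D:1.3} is the right framework for the conditional Haar basis of $L^1$ is a helpful observation that the paper leaves implicit.
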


\begin{proof}
  By Theorem \ref{L1 s-r} the normalized Haar system is 1-strategically reproducible and by
  Proposition \ref{L1 d-f} it has the $1/\delta$-factorization property. Since the Haar system is
  monotone, Theorem \ref{fun and games factor through one's identity} yields that it has the
  $1/\delta$-factorization property.
\end{proof}

The following is a direct consequence of Corollary \ref{L1 fact prop}.
\begin{cor}\label{cor:L^1(L^1)}
  The normalized bi-parameter Haar system of $L^1(L^1)$ (which is isometrically isomorphic to
  $L^1([0,1]^2)$) has the $1/\delta$-factorization property.
\end{cor}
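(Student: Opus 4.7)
The plan is to transport the factorization problem from $L^1([0,1]^2)$ to $L^1([0,1])$ via a suitable measure-preserving isomorphism, and then invoke Corollary~\ref{L1 fact prop}. I begin by fixing a measure-preserving bijection $\phi : [0,1] \to [0,1]^2$ adapted to the dyadic structure, for instance the map induced by interleaving binary expansions, so that each bi-parameter dyadic rectangle $I \times J \in \mathcal{R}_2$ pulls back under $\phi$ to a finite union of dyadic intervals of $[0,1]$. The composition operator $\Phi f = f \circ \phi$ then defines a surjective isometry from $L^1([0,1]^2)$ onto $L^1([0,1])$ that sends each normalized bi-parameter Haar function $h_{I\times J}/(|I|\cdot|J|)$ to an $L^1$-normalized signed sum of one-parameter Haar functions, supported on $\phi^{-1}(I \times J)$.

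Writing $(\tilde e_n)_n$ for this transported basis, ordered according to $\drless$, the next step is to verify that both Theorem~\ref{L1 s-r} and Proposition~\ref{L1 d-f} go through, essentially verbatim, for $(\tilde e_n)_n$. The arguments in those results rely only on the dyadic-tree structure of the indexing set, the $L^1$-normalization of the basis elements, the uniform $L^\infty$-boundedness of the biorthogonal functionals, and the fact that sums $\sum_{K \in \mathcal{E}}h_K$ of Haar functions over disjoint dyadic families remain $L^1$-normalized and $w^*$-null in the appropriate sense. All of these features are inherited by $(\tilde e_n)_n$ from the construction. Consequently, $(\tilde e_n)_n$ is $1$-strategically reproducible in $L^1[0,1]$ and has the $1/\delta$-diagonal factorization property, so Theorem~\ref{fun and games factor through one's identity} gives $(\tilde e_n)_n$ the $1/\delta$-factorization property in $L^1[0,1]$.

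Finally, since $\Phi$ is an isometric isomorphism between $L^1([0,1]^2)$ and $L^1([0,1])$, the $1/\delta$-factorization property transports back unchanged to the normalized bi-parameter Haar basis of $L^1([0,1]^2)$. If $T$ on $L^1([0,1]^2)$ has large diagonal with respect to the bi-parameter Haar basis with constant $\delta$, then $\Phi T \Phi^{-1}$ on $L^1[0,1]$ has large diagonal with constant $\delta$ with respect to $(\tilde e_n)_n$; a factorization of $I_{L^1[0,1]}$ through $\Phi T \Phi^{-1}$ conjugates to a factorization of $I_{L^1([0,1]^2)}$ through $T$ with the same norm bound.

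The main obstacle lies in specifying $\phi$ so that the transported basis $(\tilde e_n)_n$ genuinely has the structure the proofs of Theorem~\ref{L1 s-r} and Proposition~\ref{L1 d-f} rely on, namely that each $\tilde e_n$ is an $L^1$-normalized signed sum of one-parameter Haar functions supported on a suitable union of dyadic intervals, and that the associated Gamlen--Gaudet type constructions still terminate in tail subspaces with the required approximation properties. Once such a $\phi$ is in place, the corollary reduces to the one-parameter case by direct pullback.
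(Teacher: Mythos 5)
There is a genuine gap, and it sits exactly where you flag ``the main obstacle.'' Since your $\Phi$ is an isometric isomorphism, transporting the problem to $L^1[0,1]$ changes nothing: the transported system $(\tilde e_n)_n$ is an isometric copy of the bi-parameter Haar system, \emph{not} of the one-parameter Haar system, so proving that $(\tilde e_n)_n$ is $1$-strategically reproducible and has the $1/\delta$-diagonal factorization property is verbatim the same problem as proving these properties for $(h_{I\times J})$ in $L^1(L^1)$. Neither claim follows ``essentially verbatim'' from the one-parameter proofs. The proof of Theorem~\ref{L1 s-r} is a Gamlen--Gaudet construction indexed by the dyadic \emph{tree} $\mathcal{D}^+$, whereas $\mathcal{R}_2$ is a product of two trees; whether the bi-parameter Haar system of $L^1(L^1)$ is strategically reproducible is explicitly posed as an open problem in Section~\ref{sec:final-comments-open}. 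Likewise, the proof of Proposition~\ref{L1 d-f} hinges on the Semenov--Uksusov characterization of bounded Haar multipliers on $L^1$ via the chain-variation quantity $\|D\|_W$ (Lemma~\ref{stabiliziation of diagonal}); a diagonal operator with respect to $(\tilde e_n)_n$ is not a one-parameter Haar multiplier, and no bi-parameter analogue of that characterization is provided or cited. So both hypotheses of Theorem~\ref{fun and games factor through one's identity} are left unverified.

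The paper's proof avoids all of this by never using the full bi-parameter system. It takes the norm-one projection $P = P_{[0,1)}\otimes I_{L^1}$ onto $Y = [h_\emptyset\otimes h_J]_{J\in\mathcal{D}}$, which is isometrically $L^1$ with the restricted system equal to the genuine one-parameter Haar basis. The operator $PT|_Y$, viewed on $L^1$, is a (non-diagonal) operator with diagonal bounded below by $\delta$, so Corollary~\ref{L1 fact prop} factors $I_{L^1}$ through $PT|_Y$ with constant $(1+\varepsilon)/\delta$; composing with an onto isometry $Q:L^1\to L^1(L^1)$ then yields the factorization of $I_{L^1(L^1)}$ through $T$. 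If you want to salvage your approach along these lines, restrict your pullback to the sub-basis $\{h_\emptyset\otimes h_J\}_J$ (where it really does land on the one-parameter Haar system) and insert the projection $P$ before applying the one-parameter result; as written, the reduction claims a bi-parameter strengthening of Theorem~\ref{L1 s-r} and Proposition~\ref{L1 d-f} that is not available.
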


\begin{proof}
  Let $T:L^1(L^1)\to L^1(L^1)$ be a bounded linear operator so that for all $I,J\in\mathcal{D}$ we
  have
  \begin{equation*}
    \Big|\int\int (h_I\otimes h_J)T\Big(\frac{1}{|I||J|}h_I\otimes h_J\Big)dxdy\Big| \geq
    \frac{1}{\delta}.
  \end{equation*} The space $L^1(L^1)$ is the projective tensor product of $L^1$ with
  itself. This means that if we consider any two bounded linear operators $R,S$ on $L^1$ then
  there is a (unique) bounded linear operator $R\otimes S:L^1(L^1)\to L^1(L^1)$ satisying
  $(R\otimes S)(f\otimes g) =(Rf)\otimes(Sg)$ and $\|R\otimes S\| = \|R\|\|S\|$.

  Consider the canonical projection $P_{[0,1)}$ from $L^1$ onto the linear span of $h_{\emptyset}$,
  which has norm one, and also consider the identity $I_{L^1}$ on $L^1$. Take the map
  $P = P_{[0,1)}\otimes I_{L^1}:L^1(L^1)\to L^1(L^1)$, which satisfies $\|P\| = 1$.  Its image is
  the space $Y = [h_{\emptyset}\otimes h_I]_{I\in\mathcal{D}}$, which is naturally isometric to
  $L^1$ via the map $h_{\emptyset}\otimes h_I\mapsto h_I$. It follows that the map $P\circ T:Y\to Y$
  may be identified with a map on $L^1$ with diagonal bounded below by $\delta$. This means, by
  Theorem \ref{L1 d-f}, that for $\varepsilon>0$ there are $B:Y\to L^1$, $A:L^1\to Y$ so that
  $B(PT)A = I_{L^1}$ and $\|A\|\|B\| \leq(1+\varepsilon)/\delta$. Now, since $L^1$ and $L^1(L^1)$
  are isometrically isomorphic we may take an onto isometry $Q:L^1\to L^1(L^1)$ and set
  $\tilde A = AQ^{-1}:L^1(L^1)\to L^1(L^1)$, $\tilde B = QBP:L^1(L^1)\to L^1(L^1)$. It follows that
  $\|\tilde A\|\|\tilde B\| \leq(1+\varepsilon)/\delta$ and $\tilde BT\tilde A = I$.
\end{proof}

\begin{remark}
  One can use \cite[Theorem 4.2]{EnfloStarbird1979} to give a, relatively, short proof of the
  following. There is $C\geq 1$ so that if $T:L^1\to L^1$ is a bounded linear operator with diagonal
  bounded below by $\delta$ then there are $A,B:L^1\to L^1$ with $BTA = I$ and
  $\|B\|\|A\| \leq C\|T\|/\delta^2$. There are two differences with Theorem \ref{L1 d-f}. The first
  one is the power appearing on $\delta$. The more noticeable one is that in Theorem \ref{L1 d-f}
  the factorization does not depend on $\|T\|$. This seems to be the case for all known spaces with
  the factorization property.
\end{remark}

\begin{remark}
  We point out that Theorem \ref{L1 s-r}, Proposition \ref{L1 d-f}, and Corollary \ref{L1 fact prop}
  are true for the normalized Haar system of $L^p[0,1]$, $1\leq p<\infty$ as well. The proof of
  Theorem \ref{L1 s-r} requires only minor modifications and in certain cases it is simpler due to
  reflexivity and unconditionality. The proof of Proposition \ref{L1 d-f} is different and one has
  to use the details of the proof of Theorem \ref{L1 s-r}. The factorization property of these
  spaces has been known since \cite{Andrew1979}, however existing proofs did not give a sharp
  factorization estimate, in particular it was not known whether the Haar system of $L^p[0,1]$,
  $1\leq p<\infty$, has the $C/\delta$-factorization property for a uniform constant $C\geq 1$.
\end{remark}

\subsection{The proof of Proposition \ref{L1 d-f}}
\label{L1 d-f subsection}
We now turn to the proof of Proposition \ref{L1 d-f}.  We divide the argument into several steps
formulated in Lemma~\ref{1-comp} and~\ref{stabiliziation of diagonal}, below.  Lemma~\ref{1-comp} is
likely to be know. We present a short proof for sake of completeness and convenience of the reader.

\begin{lem}
  \label{1-comp}
  Let $I_0\in\mathcal{D}$ and $Y_{I_0} = [(h_I)_{I\subset I_0}]$. Then there are a subspace $Z$ of
  $Y_{I_0}$ that isometrically isomorphic to $L^1$ and a norm one linear projection $P:L^1\to Z$.
\end{lem}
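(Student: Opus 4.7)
The plan is to realize $L^1[0,1]$ isometrically inside $Y_{I_0}$ via the standard ``unfolding'' across the two dyadic halves of $I_0$, and then project onto it by the dual ``folding'' operation. Let $I_0^+,I_0^-$ be the left and right dyadic children of $I_0$, both of length $c:=|I_0|/2$, and let $\phi_\pm:[0,1]\to I_0^\pm$ be the orientation-preserving affine bijections. I would define $S:L^1[0,1]\to L^1[0,1]$ by
\begin{equation*}
  Sf(x) = \frac{1}{2c}\,f\bigl(\phi_+^{-1}(x)\bigr)\chi_{I_0^+}(x) \;-\; \frac{1}{2c}\,f\bigl(\phi_-^{-1}(x)\bigr)\chi_{I_0^-}(x).
\end{equation*}
A direct change of variables yields $\|Sf\|_1=\|f\|_1$, and it is immediate that $\supp(Sf)\subseteq I_0$ and $\int Sf=0$.

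The second step is to identify
\begin{equation*}
  Y_{I_0} = \bigl\{g\in L^1[0,1]:\supp g\subseteq I_0,\ \textstyle\int g=0\bigr\},
\end{equation*}
so that in particular $Sf\in Y_{I_0}$ and $Z:=S(L^1)$ is the sought-after isometric copy of $L^1$ inside $Y_{I_0}$. The inclusion $\subseteq$ is clear since every $h_I$ with $I\subseteq I_0$ has these two properties. For $\supseteq$, I would write any such $g$ as the $L^1$-convergent limit of its dyadic conditional expectations $\E[g\mid\cF_n]$ on $I_0$, with $\cF_n$ generated by the dyadic subintervals of $I_0$ of length $|I_0|/2^n$, and expand each difference $\E[g\mid\cF_n]-\E[g\mid\cF_{n-1}]$ as a finite linear combination of Haar functions $h_I$ with $I\subseteq I_0$.

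Finally, for the norm-one projection, I set $P:=S\circ T$, where $T:L^1[0,1]\to L^1[0,1]$ is the ``folding'' operator
\begin{equation*}
  Tg(u) = c\,g\bigl(\phi_+(u)\bigr) \;-\; c\,g\bigl(\phi_-(u)\bigr), \qquad u\in[0,1].
\end{equation*}
The triangle inequality followed by a change of variables gives $\|Tg\|_1\le\int_{I_0^+}|g|+\int_{I_0^-}|g|\le\|g\|_1$, so $\|P\|\le\|S\|\,\|T\|\le 1$; and a one-line computation shows $T(Sf)=\tfrac12 f+\tfrac12 f=f$, whence $P|_Z=\mathrm{id}_Z$ and $P$ is a norm-one projection onto $Z$. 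The only step requiring genuine care is the hyperplane characterization of $Y_{I_0}$, which reduces to the standard $L^1$-martingale convergence theorem on the dyadic filtration of $I_0$; everything else is routine bookkeeping with the change-of-variables formula, so no serious obstacle is anticipated.
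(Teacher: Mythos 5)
Your construction is correct and is essentially the paper's: your unfolding isometry $S$ is the inverse of the paper's rescaling map $T:Z\to L^1$, your $Z=S(L^1)$ is exactly the paper's subspace of functions on $I_0$ that are antisymmetric under translation between the two halves, and your $P=S\circ T$ with $TS=\Id$ reproduces the paper's folding formula $Pf=\tfrac12\bigl[(f_1+f_2)-(g_1+g_2)\bigr]$. The only (welcome) addition is that you spell out, via martingale convergence, the identification of $Y_{I_0}$ with the mean-zero integrable functions supported on $I_0$, a step the paper leaves implicit.
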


\begin{proof}
  Set $m_0 = \inf I_0$, $M_0 = \sup I_0$ and let $Z$ be the subspace consisting of all absolutely
  integrable functions $f$ that have support in $I_0$ and satisfy the condition
  \begin{equation*}
    f(x) = - f\Big(\frac{m_0 + M_0}{2} + x\Big) \text{ a.e. in }I_0^+.
  \end{equation*} It follows that $W$ is a
  subspace of $Y_{I_0}$. An onto isometry $T:Z\to L^1$ is given by
  \begin{equation*}
    Tf(x) =  |I_0|f\Big(m_0 + \frac{|I_0|}{2}x\Big).
  \end{equation*}
  The desired projection $P:L^1\to Z$ is defined as follows. For every $f\in L^1$ let
  $f_1 = f|_{I_0^+}$, $f_2 = f|_{I_0^-}$, let $g_1$ be the function with support $I_0^+$ so that
  $g_1(x) = f(x + |I_0|/2)$, for $x\in I_0^+$, and let $g_2$ be the function with support $I_0^-$ so
  that $g_2(x) = f(x - |I_0|/2)$, for $x\in I_0^-$. One can check that
  $\|f_1 + f_2\|_{L^1} = \|g_1 + g_2\|_{L^1} = \|f\|_{L^1}$. It is also not hard to see that
  $(f_1 + f_2) - (g_1+g_2) = (f_1-g_1) + (f_2 - g_2)$ is in $W$. The final step is to observe that
  $Pf = (1/2)[(f_1 + f_2) - (g_1+g_2)]$ is a norm one projection onto $Z$.
\end{proof}

It is common to call diagonal operators on the Haar system Haar multipliers. Loosely following
\cite{SemenovUksusov2012} we use the following notation.
\begin{notation}~
  \begin{itemize}
  \item[(i)] A chain of $\mathcal{D}^+$ is a sequence of intervals $\mathcal{C} = (I_n)_n$ so that
    $I_1\supsetneq I_2\supsetneq\cdots$.
  \item[(ii)] Given a Haar multiplier $D$ with entries $(c_I)_{I\in\mathcal{D}^+}$ we define the
    quantity $\|D\|_W = \sup\sum_{n=1}^\infty|c_{I_n} - c_{I_{n+1}}|$, where the supremum is taken
    over all chains $\mathcal{C} = (I_n)_n$.
  \item[(iii)] Given a Haar multiplier $D$ with entries $(c_I)_{I\in\mathcal{D}^+}$ we define the
    quantity $\|D\|_\infty = \sup_{I\in\mathcal{D}^+}|c_I|$.
  \end{itemize}
\end{notation}

\begin{lem}
  \label{stabiliziation of diagonal}
  Let $D:L^1\to L^1$ be a bounded Haar multiplier with entries $(c_I)_{I\in\mathcal{D}}$. Then for
  every $\varepsilon>0$ there exists $I_0\in\mathcal{D}$ so that if
  $Y_{I_0} = [(h_{I})_{I\subset I_0}]$ then
  \begin{equation*}
    \|(D-c_{I_0} I)|_{Y_{I_0}}\| \leq \varepsilon.
  \end{equation*}
\end{lem}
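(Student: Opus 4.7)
My approach rests on the chain variation $\|D\|_W$ from the preceding notation and its localized version $V(I) := \sup\bigl\{\sum_n |c_{I_n}-c_{I_{n+1}}| : (I_n)_n \text{ a chain with } I_1 = I\bigr\}$, measuring the chain variation of the coefficients in the subtree rooted at $I$.

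The first observation is an inequality of Semenov--Uksusov type, $\|D\|_W \leq C\|D\|_{L^1 \to L^1}$ for some absolute $C$. Given a finite chain $(I_n)_{n=1}^N$ with signs $\sigma_n = \sign(c_{I_n}-c_{I_{n+1}})$ and suitable coefficients $\alpha_n$, I would test $D$ on a normalized combination $f = \sum_n \alpha_n\, h_{I_n}/|I_n|$ of normalized Haar functions along the chain, compute $Df$ pointwise on the pieces $I_k\setminus I_{k+1}$, and telescope to obtain $\|Df\|_1 \gtrsim \sum_{n=1}^{N-1} |c_{I_n}-c_{I_{n+1}}|$ uniformly in $N$; thus $\|D\|_W < \infty$. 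Next, given $\varepsilon>0$, I claim some $I_0 \in \mathcal{D}^+$ satisfies $V(I_0)<\varepsilon/2$. Otherwise $V(I)\geq \eta > 0$ for all $I$ and some fixed $\eta$, and starting from any $J_0$ one can select a finite chain segment below $J_0$ of variation $\geq \eta/2$; from its endpoint another such segment; and iterate to build an infinite chain of arbitrarily large total variation, contradicting $\|D\|_W<\infty$.

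The remaining step shows $\|(D-c_{I_0}I)|_{Y_{I_0}}\| \leq 2V(I_0)$. Write $D' = (D-c_{I_0}I)|_{Y_{I_0}}$, a Haar multiplier on $Y_{I_0}$ with coefficients $d_I = c_I - c_{I_0}$ and $d_{I_0}=0$. For $f \in Y_{I_0}$ with $\|f\|_1=1$, expand $f = \sum_{I\subseteq I_0} a_I h_I$; for $x \in I_0$ the nonzero Haar terms at $x$ lie on the chain $I_0 = I^{(0)}(x) \supsetneq I^{(1)}(x) \supsetneq \cdots$, and the partial sum $S_n(x) := \sum_{k<n} a_{I^{(k)}(x)}h_{I^{(k)}(x)}(x)$ is the dyadic conditional expectation of $f$ at depth $n$ below $I_0$. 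Abel summation yields
\[
(D'f)(x) = d_\infty(x)\,f(x) - \sum_{k=0}^\infty \bigl(d_{I^{(k+1)}(x)}-d_{I^{(k)}(x)}\bigr)\,S_{k+1}(x),
\]
with $|d_\infty(x)| \leq V(I_0)$ since $d_{I_0}=0$. Taking absolute values, integrating, and regrouping the second term by dyadic levels turns it into $\sum_{I\subsetneq I_0} |\delta_I|\cdot\bigl|\int_I f\bigr|$ with $\delta_I = d_I-d_{\hat I}$ ($\hat I$ the dyadic parent of $I$), which is bounded in turn by $\int|f(x)|\sum_{I\ni x,\,I\subsetneq I_0}|\delta_I|\,dx \leq V(I_0)\|f\|_1$, since the inner sum is a single-chain variation below $I_0$. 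Hence $\|D'f\|_1 \leq 2V(I_0)\|f\|_1 < \varepsilon$.

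The delicate ingredient is the first step: producing a test function $f$ whose image under $D$ has $L^1$-norm comparable to the full chain variation, with a universal constant independent of the chain length. The telescoping arithmetic that arises when $D$ acts on a Haar combination along a chain requires careful bookkeeping, and the signs must be arranged so that every dyadic piece $I_k\setminus I_{k+1}$ contributes positively to $\|Df\|_1$.
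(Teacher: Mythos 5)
Your argument is correct and reaches the same conclusion by a partly different route. The paper's proof leans on both halves of the Semenov--Uksusov estimate $\tfrac14\|D\|_W\le\|D\|\le\|D\|_W+3\|D\|_\infty$: the left inequality gives $\|D\|_W<\infty$, from which the existence of $I_0$ with small local chain variation follows exactly as in your second step, and the right inequality (applied to the truncated multiplier $S-c_{I_0}P_{I_0}$, whose $W$-norm and sup-norm are both at most $\varepsilon/4$) converts small local variation into a small operator norm. You use only the first half and replace the second by a direct Abel-summation/martingale estimate $\|(D-c_{I_0}I)|_{Y_{I_0}}\|\le 2V(I_0)$; that computation is sound (the boundary term is controlled by $|d_\infty(x)|\le V(I_0)$ since $d_{I_0}=0$, and regrouping the telescoped sum over the intervals $I\subsetneq I_0$ indeed yields $\sum_I|d_I-d_{\hat I}|\,|\int_I f|\le V(I_0)\|f\|_1$), it is self-contained, and it even gives a cleaner constant because it never needs the $\|\cdot\|_\infty$ term. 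The one place where your write-up is not complete is precisely the inequality $\|D\|_W\lesssim\|D\|$, which you sketch via a test-function/telescoping construction and yourself flag as delicate; this is exactly the ingredient the paper outsources to \cite{SemenovUksusov2012}*{Theorem 3}, so you could simply cite it rather than reprove it. Minor points to tighten: the pointwise identity $(D'f)(x)=\sum_k d_{I^{(k)}(x)}a_{I^{(k)}(x)}h_{I^{(k)}(x)}(x)$ a.e.\ requires a word about martingale (a.e.) convergence of the Haar partial sums of $D'f$, and the absolute convergence of the summed-by-parts series follows from $\sup_k|S_{k+1}(x)|\le Mf(x)<\infty$ a.e., where $M$ is the dyadic maximal function.
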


\begin{proof}
  According to \cite[Theorem 3]{SemenovUksusov2012} (see end of page 312), for any Haar multiplier
  $D:L^1\to L^1$ we have
  \begin{equation}
    \label{diagonal norm}
    \frac{1}{4}\|D\|_W\leq \|D\| \leq \|D\|_W + 3\|D\|_\infty.
  \end{equation}
  We use the above to choose $I_0\in\mathcal{D}$ with the property that for any chain
  $\mathcal{C} = (I_n)$ with $I_n\subset I_0$ we have
  $\sum_n|c_{I_n} - c_{I_{n+1}}| \leq \varepsilon/4$. If such an $I_0$ would not exist then it would
  easily follow that $\|D\|_W = \infty$ which by \eqref{diagonal norm} contradicts the boundedness
  of $D$.

  Let $P_{I_0}$ define the canonical projection onto $Y_{I_0}$, given by
  $P(\sum_{I\in\mathcal{D}}a_Ih_I) = \sum_{I\subset I_0}a_Ih_I$, which has norm at most
  two. Clearly, $P_{I_0}$ is a Haar multiplier. Next, define the Haar multiplier $S$ with entries
  $(\tilde c_I)_{I\in\mathcal{D}^+}$, whith $\tilde c_I = c_I$ is $I\subset I_0$ and
  $\tilde c_I = 0$ otherwise. It is easy to see that
  $\|S - c_{I_0}P_{I_0}\|_W = \|S\|_W\leq \varepsilon/4$ and that
  $\|S - c_{I_0}P_{I_0}\|_\infty \leq \varepsilon/4$. Therefore by \eqref{diagonal norm} we deduce
  $\|S - P_{I_0}\| \leq \varepsilon$. Since $D|_{Y_{I_0}} = S|_{Y_{I_0}}$ and and
  $I|_{Y_{I_0}} = P_{I_0}|_{Y_{I_0}}$ we finally conclude
  $\|(D - c_{I_0}I)|_{Y_{I_0}}\| = \|(S - c_{I_0}P_{I_0})|_{Y_{I_0}}\| \leq \|S -
  c_{I_0}P_{I_0}\|\leq \varepsilon$.
\end{proof}

We are ready to conclude this subsection with a proof of Proposition \ref{L1 d-f}.
\begin{proof}[Proof of Proposition \ref{L1 d-f}]
  Let $D$ be a bounded Haar multiplier on $L^1$ satisfying
  $\inf_{I\in\mathcal{D}^+}|c_I| \geq \delta >0$ and fix $\varepsilon>0$. We will show that the
  identity $(1+\varepsilon)/\delta$-factors through $D$. Use Lemma \ref{stabiliziation of diagonal}
  to find $I_0\in\mathcal{D}$ so that
  $\|(D - c_{I_0}I)|_{Y_{I_0}}\| \leq \delta\varepsilon/(1+\varepsilon)$.

  By Lemma \ref{1-comp} there are a subspace $Z$ of $Y_{I_0}$, an onto isometry $A:L^1\to Z$, and a
  norm one projection $P:L^1\to Z$. Define $B = (1/c_{I_0})A^{-1}P$, which is well defined on $L^1$
  and $\|B\| = |1/c_{I_0}|\leq 1/\delta$. As the image of $A$ is $Z$ it easily follows that
  $B(c_{I_0}I)A = I$. We calculate
  \begin{equation*}
    \|BDA - I\| = \|B(D - c_{I_0}I)A\| = \|B(D - c_{I_0}I)|_ZA\| \leq
    \|B\|\|A\|\frac{\delta\varepsilon}{1+\varepsilon} \leq \frac{\varepsilon}{1+\varepsilon}
  \end{equation*} and
  hence the operator $BDA$ is invertible with $\|(BDA)^{-1}\| \leq 1+\varepsilon$. Define
  $\tilde B = (BDA)^{-1}B$. Observe that $\tilde BDA = I$ and
  $\|\tilde B\|\|A\| \leq (1+\varepsilon)/\delta$, i.e., the identity almost $1/\delta$-factors
  through $D$.
\end{proof}

\subsection{The proof of Theorem \ref{L1 s-r}}
\label{sec:proof-theorem-refl1}
The following Lemma~\ref{middle stepone} is a well known result, which goes back to
Gamlen-Gaudet~\cite{gamlen:gaudet:1973}.  For more details, we also refer
to~\cite[page~176~ff.]{mueller:2005}. It describes the situation player (II) is striving to achieve
in order to win the game of strategic reproducibility.  Recall that for
$\mathcal{A}\subset\mathcal{D}$ we set $\mathcal{A}^* = \cup\mathcal{A}$.
\begin{lem}
  \label{middle stepone}
  Let $\kappa\in(0,1)$. Then, there exists a sequence of positive real numbers $(\delta_n)_n$ so
  that the following holds.  Let $(\mathcal{H}_I)_{I\in\mathcal{D}^+}$ be a collection of non-empty
  finite subsets of $\mathcal{D}$, so that for each $I\in\mathcal{D}^+$ the collection
  $\mathcal{H}_I$ consists of pairwise disjoint intervals, and for each $I\in\mathcal{D}^+$ let
  $\bar \varepsilon_I = (\varepsilon^I_L)_{L\in\mathcal{H}_I}\in\{-1,1\}^{\mathcal{H}_I}$. Define
  for each $I\in\mathcal{D}$ the function $b_I = \sum_{L\in\mathcal{H}_I}\varepsilon^I_Lh_L$ and
  $b_\emptyset = |\sum_{L\in\mathcal{H}_\emptyset}\varepsilon^I_Lh_L|$. Assume that the following
  are satisfied.
  \begin{itemize}

  \item[(a)] For all $I,J\in\mathcal{D}$ with $I\cap J = \emptyset$ we have
    $\mathcal{H}_I^*\cap\mathcal{H}_J^* = \emptyset$.

  \item[(b)] For all $I\in\mathcal{D}$ we have $\mathrm{supp}(b_{I^+})\subset [b_I = 1]$ and
    $\mathrm{supp}(b_{I^-})\subset [b_I = -1]$.

  \item[(c)] For all $n\in\mathbb{N}$ and $I\in\mathcal{D}_n$ if $I = J^+$ or $I = J^-$ we have
    \begin{equation*}
      (1-\delta_n)\frac{|\mathcal{H}_J^*|}{2}\leq
      |\mathcal{H}_I^*|\leq(1+\delta_n)\frac{|\mathcal{H}_J^*|}{2}.
    \end{equation*}
  \item[(d)] $\mathcal{H}^*_{[0,1)}\subset\mathcal{H}^*_\emptyset$ and
    $|\mathcal{H}^*_{[0,1)}| \geq (1-\delta_1)|\mathcal{H}^*_\emptyset|$.

  \end{itemize}
  Then, if $\lambda = |\mathcal{H}_{\emptyset}^*|$, the sequences $(h_I/|I|)_{I\in\mathcal{D}^+}$
  and $(b_I/\lambda|I|)_{I\in\mathcal{D}^+}$, when they are both viewed as sequences in $L^1$, are
  $(1+\kappa)$-impartially equivalent. Furthermore, the sequences $(b_I)_{I\in\mathcal{D}^+}$ and
  $(h_I)_{I\in\mathcal{D}^+}$, when they are both viewed as a sequence in $L^\infty$, are
  isometrically equivalent.
\end{lem}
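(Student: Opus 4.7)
My plan is to build a transport map $\phi$ from a full-measure subset of $\mathcal{H}_\emptyset^*$ onto $[0,1)$ that conjugates $b_I$ with $h_I$. The hypotheses force the supports $(\mathcal{H}_I^*)_{I\in\mathcal{D}^+}$ to form a tree on $[0,1)$ combinatorially isomorphic to the dyadic tree with each node of measure close to $\lambda|I|$, and this identification will give the $L^\infty$ statement isometrically while the $L^1$ statement holds up to a multiplicative error controlled by $\sum_n\delta_n$. Concretely, set $A_n=\bigcup_{I\in\mathcal{D}_n}\mathcal{H}_I^*$, a disjoint union by (a) and a decreasing sequence by (b), and let $A=\bigcap_nA_n$. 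Iterating (c) and (d) yields $|A|\geq\prod_{k=1}^\infty(1-\delta_k)\cdot\lambda$ and $|\mathcal{H}_I^*|\in[(1-\eta)\lambda|I|,(1+\eta)\lambda|I|]$, where $\eta:=\prod_k(1+\delta_k)-1$. For $x\in A$ there is a unique nested sequence $I_0(x)\supset I_1(x)\supset\cdots$ with $I_n(x)\in\mathcal{D}_n$ and $x\in\mathcal{H}_{I_n(x)}^*$; I set $\phi(x)\in[0,1)$ to be the unique point of $\bigcap_nI_n(x)$. The sign prescription in (b) then yields the pointwise identity
\begin{equation*}
    b_I(x)=h_I(\phi(x)),\qquad x\in A,\ I\in\mathcal{D}^+,
\end{equation*}
and since $\phi^{-1}(I)=A\cap\mathcal{H}_I^*$, the push-forward of Lebesgue measure by $\phi$ equals $\lambda$ times Lebesgue measure on $[0,1)$ up to a factor $1\pm\kappa/10$ once the $\delta_n$ are chosen small enough.

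For the $L^\infty$ statement, fix a finite linear combination, put $g=\sum a_Ih_I$, $\tilde g=\sum a_Ib_I$, and let $M$ be the maximal depth of the support of $(a_I)$. For each $I_M\in\mathcal{D}_M$ the sets $\mathcal{H}_{I_M^\pm}^*$ are nonempty, and the nesting $\mathcal{H}_{I_M^\pm}^*\subset\mathcal{H}_{I_M}^*\subset\cdots\subset\mathcal{H}_\emptyset^*$ together with the sign rule in (b) and the disjointness in (a) gives $\tilde g|_{\mathcal{H}_{I_M^\pm}^*}\equiv g|_{I_M^\pm}$. Hence every value $g$ takes on a depth-$(M+1)$ leaf is attained by $\tilde g$ on a set of positive measure; conversely, on each gap $C_I:=\mathcal{H}_I^*\setminus(\mathcal{H}_{I^+}^*\cup\mathcal{H}_{I^-}^*)$ the summands $a_Jb_J$ with $J\subsetneq I$ vanish, so $\tilde g$ is constant on each of $C_I\cap[b_I=\pm 1]$ and equals an average of $g$ over $I^\pm$, which is bounded by $\|g\|_\infty$. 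This gives the isometric $L^\infty$ identification.

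For the $L^1$ statement, write $g=\sum a_Ih_I/|I|$, $\tilde g=\sum a_Ib_I/(\lambda|I|)$. On $A$ the identity $\tilde g=(g\circ\phi)/\lambda$ together with the measure estimate on $\phi^{-1}(I)$ gives $\int_A|\tilde g|=(1\pm\kappa/10)\|g\|_1$ by a direct computation on the finite partition on which $g$ is constant. For the error decompose $[0,1)\setminus A=(\mathcal{H}_\emptyset^*\setminus A_0)\cup\bigcup_{n\geq 0}\bigcup_{I\in\mathcal{D}_n}C_I$; on $C_I$, the function $\tilde g$ equals $1/\lambda$ times the average of $g$ over $I^+$ or $I^-$, and $|C_I|\leq\delta_{n+1}|\mathcal{H}_I^*|$, so
\begin{equation*}
    \int_{C_I}|\tilde g|\,dx
    \leq\frac{\delta_{n+1}}{\lambda}|\mathcal{H}_I^*|\bigl(|\langle g\rangle_{I^+}|+|\langle g\rangle_{I^-}|\bigr),
\end{equation*}
where $\langle g\rangle_J$ denotes the average of $g$ on $J$. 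Summing over $I\in\mathcal{D}_n$ and applying Jensen's inequality yields $\int_{A_n\setminus A_{n+1}}|\tilde g|\leq 2(1+\eta)\delta_{n+1}\|g\|_1$, while the leftover $\mathcal{H}_\emptyset^*\setminus A_0$ contributes at most $\delta_1|a_\emptyset|\leq\delta_1\|g\|_1$. Summing in $n$ bounds the total error by $C\sum_n\delta_n\cdot\|g\|_1$ for an absolute constant $C$; choosing $(\delta_n)$ with both $\eta$ and $C\sum\delta_n$ below $\kappa/10$ then delivers the impartial $(1+\kappa)$-equivalence. The main obstacle is precisely the control of this geometric error across infinitely many tree levels, which is what forces the summability condition on $(\delta_n)$.
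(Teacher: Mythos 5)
Your proof is correct. The paper does not actually prove Lemma~\ref{middle stepone}; it only cites the Gamlen--Gaudet construction and M\"uller's monograph, and the argument intended there is exactly the transplantation you carry out: the nested, almost-measure-halving supports $\mathcal{H}_I^*$ give a map $\phi$ from $A=\bigcap_n A_n$ onto $[0,1)$ with $b_I=h_I\circ\phi$ and $|\phi^{-1}(I)|\approx\lambda|I|$, the leaf/gap decomposition of $\mathcal{H}_\emptyset^*$ gives the isometric $L^\infty$ identity, and the summability of $(\delta_n)$ controls the $L^1$ error from the gaps $C_I$. All the estimates you state check out (in particular $|C_I|\le\delta_{n+1}|\mathcal{H}_I^*|$ and the identification of $\tilde g$ on $C_I\cap[b_I=\pm1]$ with $\lambda^{-1}\langle g\rangle_{I^\pm}$), so your write-up is a valid self-contained substitute for the cited reference.
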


The following Lemma allows player (II) to make the appropriate choice of vectors.
\begin{lem}
  \label{approx subsp}
  Let $H$ be in $\mathrm{cof}(L^1)$, $G$ be in $\mathrm{cof}_{w^*}(L^\infty)$, and $\kappa >
  0$. Then there exists $n_0\in\mathbb{N}$ so that for every $f$ in the linear span of
  $(h_I)_{I\in\mathcal{D}\setminus \mathcal{D}^{n_0}}$ with $\|f\|_\infty\leq 1$ we have
  \begin{itemize}

  \item[(i)] $\mathrm{dist}(f,H)<\kappa$, if $f$ is viewed as an element of $L^1$ and

  \item[(ii)] $\mathrm{dist}(f,H)<\kappa$, if $f$ is viewed as an element of $L^\infty$.

  \end{itemize}

\end{lem}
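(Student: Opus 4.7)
The plan is to reduce each statement to a duality inequality on a finite-dimensional quotient, and then exploit that dyadic step functions are dense in $L^1$ together with the $L^2$-orthogonality of the Haar system. Note that statement (ii) should read $\mathrm{dist}(f,G)<\kappa$; we proceed under that reading.

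For (i), since $H\in\mathrm{cof}(L^1)$ is closed and of finite codimension, there exist $g_1,\ldots,g_N\in L^\infty=(L^1)^*$ with $H=\bigcap_{i=1}^N\ker g_i$. The quotient $L^1/H$ is finite-dimensional, so the induced functionals $\bar g_i$ span $(L^1/H)^*$, and all norms on $(L^1/H)^*$ are equivalent; this gives a constant $C_H>0$ with
\begin{equation*}
\mathrm{dist}_{L^1}(f,H) \;\le\; C_H\,\max_{1\le i\le N}|\langle g_i,f\rangle|,\qquad f\in L^1.
\end{equation*}
Since $g_i\in L^\infty\subset L^1$ and dyadic step functions are dense in $L^1$ (e.g., via the dyadic martingale convergence $\mathbb E(g_i|\mathcal F_n)\to g_i$ in $L^1$), we may pick $n_0^{(1)}$ large enough that each $g_i$ admits an approximation $\tilde g_i\in\mathrm{span}(h_J:J\in\mathcal D^{n_0^{(1)}}\cup\{\emptyset\})$ with $\|g_i-\tilde g_i\|_{L^1}<\kappa/(NC_H)$. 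Then for any $f\in\mathrm{span}(h_I:I\in\mathcal D\setminus\mathcal D^{n_0^{(1)}})$ with $\|f\|_\infty\le 1$, the $L^2$-orthogonality $\int h_Ih_J=|I|\delta_{I,J}$ forces $\int\tilde g_i f=0$, so
\begin{equation*}
|\langle g_i,f\rangle|=|\langle g_i-\tilde g_i,f\rangle|\le\|g_i-\tilde g_i\|_{L^1}\|f\|_\infty<\kappa/(NC_H),
\end{equation*}
and the displayed inequality above yields $\mathrm{dist}_{L^1}(f,H)<\kappa$.

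For (ii), the $w^*$-closedness of $G$ gives $G=F^\perp$ for a finite-dimensional $F\subset L^1$; pick a basis $f_1,\ldots,f_N$ of $F$. The quotient $L^\infty/G$ is again finite-dimensional, so there is $C_G>0$ with $\mathrm{dist}_{L^\infty}(f,G)\le C_G\max_i|\langle f_i,f\rangle|$ for all $f\in L^\infty$. Approximate each $f_i\in L^1$ by a dyadic step function $\tilde f_i\in\mathrm{span}(h_J:J\in\mathcal D^{n_0^{(2)}}\cup\{\emptyset\})$ within $\kappa/(NC_G)$ in $L^1$-norm. For any $f$ as in the statement with $n_0\ge n_0^{(2)}$ one has $\|f\|_\infty\le 1$ and $\int\tilde f_i f=0$, hence $|\langle f_i,f\rangle|<\kappa/(NC_G)$ and therefore $\mathrm{dist}_{L^\infty}(f,G)<\kappa$. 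Taking $n_0=\max(n_0^{(1)},n_0^{(2)})$ completes both parts.

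There is no genuine obstacle here: the argument is purely functional-analytic. The only subtlety worth flagging is the asymmetric use of the duality pairing — in (i) we test $f\in L^1$ against $L^\infty$-functionals defining $H$, while in (ii) we test $f\in L^\infty$ against $L^1$-functionals defining $G$ — but in both directions the $L^1$-density of dyadic step functions combined with the $L^\infty$-boundedness $\|f\|_\infty\le 1$ is exactly what makes the single elementary bound $|\int(g-\tilde g)f|\le\|g-\tilde g\|_{L^1}\|f\|_\infty$ applicable.
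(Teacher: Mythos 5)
Your proof is correct, and you were right to read (ii) as $\mathrm{dist}(f,G)<\kappa$; that is indeed a typo in the statement. Your route is genuinely different from the paper's. Both arguments begin the same way, reducing each part to making the finitely many pairings $\langle g_j,f\rangle$ small (the paper via Hahn--Banach, you via the equivalence of norms on the finite-dimensional quotient $L^1/H$, resp.\ $L^\infty/G$ --- the same fact in two guises). The divergence is in how that smallness is obtained. The paper argues by contradiction and soft compactness: a putative sequence of counterexamples $(f_k)$ with $\|f_k\|_\infty\le 1$ is uniformly integrable, hence (Dunford--Pettis) has a weakly convergent subsequence, whose limit must lie in $\bigcap_k\overline{\spa}\{h_I:I\in\cD\setminus\cD^k\}=\{0\}$, forcing $\langle g_j,f_{k_i}\rangle\to 0$. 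You instead argue directly and quantitatively: approximate each defining functional in $L^1$-norm by its dyadic martingale projection $\tilde g_i\in\spa(h_J:J\in\cD^{n_0}\cup\{\emptyset\})$, kill the pairing $\int\tilde g_i f$ exactly by Haar orthogonality, and bound the error by $\|g_i-\tilde g_i\|_{L^1}\|f\|_\infty$. Your version avoids weak compactness in $L^1$ altogether and yields an explicit $n_0$ (determined by how fast $\E(g_i\mid\cF_n)\to g_i$), at the cost of a slightly longer setup; the paper's version is shorter to state but nonconstructive. The single elementary estimate $|\int(g-\tilde g)f|\le\|g-\tilde g\|_{L^1}\|f\|_\infty$ is exactly the right pivot that makes both parts (i) and (ii) symmetric despite the asymmetric duality, as you note.
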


\begin{proof}
  We first show (i). Recall that there are $g_1,\ldots,g_N\in L^\infty$ so that
  $H = \cap_{j=1}^N\mathrm{ker}g_j$. It follows, from the Hahn Banach theorem, that there is
  $\delta>0$ so that for every $f\in L^1$ with $|\int g_j(x)f(x) dx| < \delta$ for $1\leq j\leq N$
  we have $\mathrm{dist}(f,H)<\kappa$. If we assume that the conclusion is false, i.e. the desired
  $k_0$ does not exist, there is a sequnce $(f_k)_k$ with $\|f_k\|_\infty\leq1$ and
  $f_k\in\mathrm{span}\{h_I: I\in\mathcal{D}\setminus\mathcal{D}^k\}$, so that
  $\mathrm{dist}(f_k,H)\geq \kappa$ for all $k\in\mathbb{N}$. As this sequence is uniformly
  integrable it has a subsequence $(f_{k_i})_i$ that converges weakly to an
  $f\in\cap_k\overline{\mathrm{span}}\{h_I: I\in\mathcal{D}\setminus\mathcal{D}^k\} = \{0\}$. Thus,
  $\lim_i|\int g_j(x)f_{k_i}(x)dx = 0$, for $1\leq j\leq N$, i.e.
  $\lim_k\mathrm{dist}(f_{k_i},H) = 0$, which is a contradiction.

  The second statement follows from a similar argument. Use that there are $g_1,\ldots,g_N$ in $L^1$
  so that $G = \{f_1,\ldots,f_N\}^\perp$ and that for any sequence $(g_k)_k$ with
  $\|g_k\|_\infty\leq 1$ and $g_k\in\mathrm{span}\{h_I: I\in\mathcal{D}\setminus\mathcal{D}^k\}$,
  for all $k\in\mathbb{N}$, we have that $(g_k)_k$ converges to zero in the $w^*$-topology.
\end{proof}

We refer to Section~\ref{sec:coll-dyad-interv} for the notation employed systematically in the
proof, below.
\begin{proof}[Proof of Theorem \ref{L1 s-r}]
  Enumerate $\mathcal{D}^+$ as $(I_k)_{k\in\mathbb{N}}$ according to lexicographical order. We will
  describe the winning strategy of player (II) in a game of $\mathrm{Rep}_{(L^1_0,(h_I))}(1,\eta)$,
  for fixed $\eta > 0$. Before the game starts player (I) picks a partition
  $\mathbb{N} = N_1\cup N_2$, which corresponds to a partition
  $\mathcal{D}^+ = \mathcal{A}_1\cup\mathcal{A}_2$. Before proceeding with the game, we claim that
  $[0,1) = \limsup\mathcal({A}_1)\cup \lim\sup\mathcal({A}_2)$. Indeed, if $x\in[0,1)$ then for
  every $I\in\mathcal{D}$ with $x\in I$ we have $I\in\mathcal{A}_1$ or $I\in\mathcal{A}_2$. That is,
  $x\in I$ for infinitely many $I\in\mathcal{A}_1$ or for infinitely many $I\in\mathcal{A}_2$. In
  the first case $x\in\limsup\mathcal({A}_1)$ and in the second case
  $x\in\limsup\mathcal({A}_2)$. We proceed by assuming without loss of generality that
  $|\limsup(\mathcal{A}_1)|\geq 1/2$. By Lemma \ref{assume finite} there is
  ${\mathcal{A}}\subset\mathcal{A}_1$ so that $\mathscr{G}_n({\mathcal{A}})$ is finite for all
  $n\in\mathbb{N}$ and $|\limsup({\mathcal{A}})| =\lambda \geq 2/3$. Henceforth, when we use
  Notation \ref{level partition} it shall be with respect to the collection $\mathcal{A}$. This
  collection $\mathcal{A}$ corresponds to some $N\subset N_1$. Each round $k$ corresponds to an
  $I\in\mathcal{D}^+$ via the lexicographical identification
  $\mathcal{D}^+\leftrightarrow\N$. Player (I) first chooses $\eta_k>0$, $W_k\in\mathrm{cof}(L^1)$
  and $G_k\in\mathrm{cof}_{w^*}(L^\infty)$. Then player (II) has the right to choose a subset $E_k$
  of either $N_1$ or $N_2$. He or she will always choose $E_k\subset N$.  This $E_k$ corresponds to
  a finite $\mathcal{H}_I\subset\mathcal{A}$. We shall describe the choice in detail further bellow,
  but let us say for the time being that there is $m_k\in\N$, with $m_k>m_{k-1}$ if $k>1$, so that
  $\mathcal{H}_I\subset \mathcal{A}_{m_k}$ and
  $\mathcal{H}_I^*\subset\mathcal{H}_\emptyset^*$. Next, player (I) chooses signs
  $(\varepsilon^k_i)_{i\in E_k}$ which we relabel as $(\varepsilon_L^I)_{L\in\mathcal{H}_I}$. We
  shall put $b_I = \sum_{L\in\mathcal{H}_I}\varepsilon_L^Ih_I$ and $\tilde b_I = b_\emptyset b_I$
  (pointwise). If $k>1$ then by the fact that $m_k>1$ and
  $\mathcal{H}_I^*\subset\mathcal{H}_\emptyset^*$ we have that
  $\tilde b_I = \sum_{L\in\mathcal{H}_I}\tilde\varepsilon_L^Ih_I$, for a choice of signs
  $(\tilde \varepsilon_L^I)_{I\in\mathcal{H}_I}$ that does not necessarily coincide with
  $(\varepsilon_L^I)_{L\in\mathcal{H}_I}$. Of some importance is also the sequence of positive real
  numbers $(\delta_n)_n$ provided by Lemma \ref{middle stepone} if we take $\kappa = \eta$.

  We can now describe how player (II) makes a choice in each round $k$. Let $I\in\mathcal{D}^+$
  correspond to $k$ in the lexicographical enumeration. Then, either $I = \emptyset$ (if $k=1$),
  $I = [0,1)$ (if $k=2$), or there is $2\leq k'<k$ so that if $J = I_{k'}$ then $I = J^+$ or
  $I = J^-$. The round starts by player (I) picking $\eta_k>0$.  Player (II) will pick
  $E_k\subset N$ that corresponds to an $\mathcal{H}_I\subset \mathcal{A}$ which is chosen as
  follows:
  \begin{itemize}

  \item[(i)] There is $m_k\in\mathbb{N}$, with $m_{k}>m_{k-1}$ if $k>1$, so that $\mathcal{H}_I$ is
    of one of the following forms
    \begin{itemize}

    \item[(ia)] $\mathcal{H}_I = \mathscr{G}_{m_k}(\mathcal{A})$, if $I = \emptyset$ or $I = [0,1)$
      (\ie, when $k=1$ or $k=2$),

    \item[(ib)] $\mathcal{H}_I = (\mathcal{H}_J)_{\bar\varepsilon_J,m_k}^\mathrm{succ}$, if
      $I = J^+ = I_{k'}$ and $\bar\varepsilon_J = (\tilde\varepsilon_L^J)_{J\in\mathcal{H}_J}$
      coming from $\tilde b_J$.

    \item[(ic)] $\mathcal{H}_I = (\mathcal{H}_J)_{\text{-}\bar\varepsilon_J,m_k}^\mathrm{succ}$, if
      $I = J^- = I_{k'}$ and $\bar\varepsilon_J = (\tilde\varepsilon_L^J)_{J\in\mathcal{H}_J}$
      coming from $\tilde b_J$.

    \end{itemize}

  \item[(ii)] If $A = \limsup(\mathcal{A})$ then if
    $|\mathcal{H}_I^*\cap A| > (1-\delta_{n+1}/2)|\mathcal{H}_I^*|$, where $I\in\mathcal{D}_n$ and
    $(\delta_i)_i$ is the sequence mentioned above, provided by Lemma \ref{middle stepone}. If
    $I=\emptyset$ replace $n$ with $0$.

  \item[(iii)] For every $f\in\mathrm{span}(h_L)_{L\in\mathcal{D}\setminus\mathcal{D}^{m_k-1}}$ with
    $\|f\|_\infty \leq 1$ we have
    \begin{equation*}
      \mathrm{dist}_{L^1}(f,W_k) \leq \eta_k|I|\text{ and }\mathrm{dist}_{L^\infty}(f,G_k) \leq
      \eta_k.
    \end{equation*}

  \end{itemize}
  Having chosen such an $\mathcal{H}_I$ player (II) picks scalars
  $(\lambda_L^I)_{L\in\mathcal{H}_I}$, $(\mu_L^I)_{L\in\mathcal{H}_I}$ by taking
  $\lambda_L^I = |L|/(|\mathcal{H}_\emptyset||I|)$ and $\mu_L^I = 1$, for all $L\in\mathcal{H}_I$.

  We must show that player (II) can pick $\mathcal{H}_I$ satisfying (i), (ii), and (iii) as well as
  that
  \begin{equation}
    \label{close to one}
    1-\eta \leq \sum_{L\in\mathcal{H}_I}\lambda_L^I\mu_L^I \leq 1+\eta.
  \end{equation}

  Note that if $m_k$ is sufficiently large then by Lemma \ref{approx subsp} condition (iii) is
  satisfied. We can focus on showing that we can pick $m_k$, as large as desired, so that
  $\mathcal{H}_I$ is of one of the forms in (i) and so that (ii) is satisfied. If $I=\emptyset$ and
  $k=1$ then $A = \cap_m\mathscr{G}_m(\mathcal{A})^*$ then
  $|A| = \lim_n|\mathscr{G}_{m}(\mathcal{A})^*|$ which easily yields that we can pick $m_1$ as large
  as we wish so that
  $|\mathscr{G}_{m_1}(\mathcal{A})^*\cap A| = |A| > (1-
  \delta_2/2)|\mathscr{G}_{m_1}(\mathcal{A})^*|$. If $I = [0,1)$ we act similarly. If $I = J^+$ with
  $I\in\mathcal{D}_n$ then by assumption player (II) has picked $\mathcal{H}_J\subset\mathcal{A}$
  with $|\mathcal{H}^*_J\cap A| > (1-\delta_n/2)|\mathcal{H}_J^*|$. Let also
  $\bar\varepsilon_J = (\tilde\varepsilon_L^J)_{J\in\mathcal{H}_J}$ denote the signs coming from
  $\tilde b_J$. By Lemma \ref{eventual choices} there exists $m_0$ so that for any $m\geq m_0$ we
  have
  \begin{equation*}
    |((\mathcal{H}_J)_{\bar\varepsilon_J,m}^\mathrm{succ})^*\cap A| >
    (1-\delta_{n+1}/2)|((\mathcal{H}_J)_{\bar\varepsilon_J,m}^\mathrm{succ})^*|.
  \end{equation*}
  Thus, if we pick $m_k$ sufficiently large we may set
  $\mathcal{H}_I = (\mathcal{H}_J)_{\bar\varepsilon_J,m}^\mathrm{succ}$ and (i) and (ii) are
  satisfied. If $I = J^-$ the argument is the same.

  The proof of \eqref{close to one} requires an inductive argument. We will show this simultaneously
  with proving that player (II) has forced the desired winning conditions. Define for
  $I\in\mathcal{D}^+$ the functions
  \begin{equation*}
    x_I = \sum_{L\in\mathcal{H}_I}\varepsilon_L^I\frac{|L|}{|\mathcal{H}_\emptyset| |I|}h_I\in L^1\text{ and
    }x_I^* = \sum_{L\in\mathcal{H}_I}\varepsilon_L^Ih_I\in L^\infty.
  \end{equation*}
  Out next goal is to show that $(x_I)_{I\in\mathcal{D}^+}$ is $(1+\eta)$-impartially equivalent to
  $(h_I/|I|)_{I\in\mathcal{D}^+}$ in $L^1$ and $(x_I^*)_{I\in\mathcal{D}}$ is isometrically
  equivalent to $(h_I)_{I\in\mathcal{D}}$ in $L^\infty$.  We will first show that
  $(\tilde b_I)_{I\in\mathcal{D}^+}$, where
  $\tilde b_\emptyset = b_\emptyset^2 = |\sum_{L\in\mathcal{H}_\emptyset}\varepsilon_L^\emptyset
  h_L|$ and for each $I\in\mathcal{D}$
  $\tilde b_I = \sum_{L\in\mathcal{H}_I}\tilde \varepsilon_L^Ih_I$, satisfies the assumptions of
  Lemma \ref{middle stepone} and use that to reach the desired goal.

  Assumption (a) follows easily from (i) and assumption (b) follows easily from (ib) and (ic). Let
  us now show that (c) is satisfied and let $n\in\mathbb{N}$, $I\in\mathcal{D}_n$, with $I = I_k$,
  so that $I = J^+$ or $I = J^-$ for some $J\in\mathcal{D}_{n-1}$. We shall assume that $I = J^+$ as
  the other case has the same proof. By (iii), applied to $J$, we have that
  $|\mathcal{H}_J^*\cap A| > (1-\delta_n/2)|\mathcal{H}_J^*|$. This, by the first statement of Lemma
  \ref{eventual choices} yields that
  \begin{equation*}
    |(\mathcal{H}_J)_{\bar\varepsilon_J}^*\cap A| > (1-\delta_n)\frac{|\mathcal{H}_J^*|}{2}.
  \end{equation*}
  By the definition of $(\mathcal{H}_J)^\mathrm{succ}_{\bar\varepsilon_J,m_k}$ it follows that
  $(\mathcal{H}_J)_{\bar\varepsilon_J}^*\cap A = (\mathcal{H}_J)_{\bar\varepsilon_J,m_k}^*\cap A =
  \mathcal{H}_I^*\cap A$. We calculate,
  \begin{equation*}
    \frac{|\mathcal{H}_J^*|}{2} = |(\mathcal{H}_J)_{\bar\varepsilon_J}^*| \geq |\mathcal{H}_I^*| \geq
    |\mathcal{H}_I^*\cap A| >(1-\delta_n)\frac{|\mathcal{H}_J^*|}{2},
  \end{equation*}
  i.e., (c) holds. Assumption (d) is easier to show.
  
  Now that we know that the assumptions of Lemma \ref{middle stepone} are satisfied we conclude that
  the sequences
  \[\tilde x_I = \sum_{L\in\mathcal{H}_I}\tilde \varepsilon_L^I\frac{|L|}{|\mathcal{H}_\emptyset|
      |I|}h_I = \in L^1\text{ and }\tilde x_I^* = \sum_{L\in\mathcal{H}_I}\tilde
    \varepsilon_L^Ih_I\in L^\infty. \] are $(1+\eta)$-impartially equivalent to
  $(h_I/|I|)_{I\in\mathcal{D}^+}$ in $L^1$ and isometrically equivalent to $(h_I)_{I\in\mathcal{D}}$
  in $L^\infty$ respectivelly. We next observe that for $I\in \mathcal{D}^+$ we have
  $\tilde x_I = b_\emptyset x_I$ and $\tilde x_I^* = b_\emptyset x_I^*$. But $|b_\emptyset(t)|$ is
  one whenever $t\in\mathcal{H}_\emptyset^*\supset\supp(x_I)=\supp(x^*_{I})$ and thus
  $(x_I)_{I\in\mathcal{D}^+}$ is isometrically equivalent to $(\tilde x_I)_{I\in \mathcal{D}^+}$ and
  $(x^*_I)_{I\in\mathcal{D}^+}$ is isometrically equivalent to
  $(\tilde x^*_I)_{I\in \mathcal{D}^+}$. This means that we reached our goal.

  Next, we need to observe that if $I = I_k$ then $\mathrm{dist}_{L^1}(x_{I},W_k) < \eta_k$ as well
  as $\mathrm{dist}_{L^\infty}(x^*_{I},G_k) < \eta_k$. Both of these inequalities are an immediate
  consequence of (iii) and the fact that
  $x_k,x_k^*\in\mathrm{span}(h_L)_{L\in\mathscr{G}_{n_k}(\mathcal{A})}\subset
  \mathrm{span}(h_L)_{L\in\mathcal{D}\setminus\mathcal{D}^{m_k-1}}$ and $\|x_I\|_{L^\infty} = |I|$,
  $\|x_I^*\|_{L^\infty} = 1$.

  It only remains to prove \eqref{close to one}, which follows easily from the fact that $(x_I)_I$
  is $(1+\eta)$-impartially equivalent to $(h_I/|I|)_I$. Indeed,
  \begin{equation*}
    \sum_{L\in\mathcal{H}_I}\lambda_L^I\mu_L^I = \frac{1}{\lambda |I|}\sum_{L\in\mathcal{H}_I}|L| =
    \|x_I\|_{L^1}
  \end{equation*}
  and $\|x_I\|_{L^1}$ is between
  \begin{equation*}
    \sqrt{(1-\eta)}(\|h_I\|_{L^1}/|I|) >1-\eta
    \qquad\text{and}\qquad
    \sqrt{(1+\eta)}(\|h_I\|_{L^1}/|I|) < 1+\eta.
  \end{equation*}
  The proof is complete.
\end{proof}

\section{Unconditional sums of spaces with strategical reproducible bases}
\label{sec:uncond-sums-spac}

In this section we determine that the strategical reproducibility is inherited by unconditional
sums.

For a Banach spaces $X$ with a 1-unconditional basis $(e_n)_n$ and a sequence of Banach spaces
$(Y_n)_n$ we denote by $Z = (\sum Y_n)_X$ the Banach space of all sequences $z = (y_n)_n$ with
$y_n\in Y_n$ for all $n\in\N$ and the quantity
\begin{equation*}
  \left\|z\right\| = \left\|\sum_{n=1}^\infty\|y_n\|_{Y_n}e_n\right\|_X
\end{equation*}
is well defined. For each $k\in\N$ let $P_k:Z\to Y_k$ denote the map given by $P_k(y_n)_n =
y_k$. The space $Y_k$ can be naturally isometrically identified with a subspace of Z, namely the one
consisting of all sequences which have all coordinates, except the $k$'th one, equal to zero. Thus,
with this identification, $P_k$ is a norm one projection.

\begin{remark}
  \label{operator direct sum}
  If $A_n:Y_n\to Y_n$, $n\in\N$, are bounded linear operators and $\sup_n\|A_n\| = C< \infty$ then
  by 1-unconditionality the map $A:Z\to Z$ with $A(z) = \sum_nA_nP_n(z)$ is bounded with $\|A\| =C$.
\end{remark}

\begin{remark}
  \label{appropriate enumeration}
  If there exists a common $\lambda\geq 1$ such that each $Y_n$ has a Schauder basis $(e_i^{(n)})_i$
  whose basis constant is bounded by $\lambda$ then there is an enumeration $(\tilde e_i)$ of
  $((e_i^{(n)})_i)_n$ that is Schauder basic whose basis constant at most $\lambda$. In fact, this
  is satisfied by any enumeration $(\tilde e_i)$ with the property that whenever $i<j$ if for some
  $n\in\N$ we have $e_{i}^{(n)} = \tilde e_{k_i^{(n)}}$, $e_j^{(n)} = \tilde e_{k_j^{(n)}}$ then
  $k_i^{(n)} < k_j^{(n)}$.
\end{remark}

\begin{lem}
  \label{diagonal restricted isomorphism goes through}
  Let $X$ be a Banach space with a 1-unconditional basis $(e_n)_n$, $(Y_n)_n$ be a sequence of
  Banach spaces, and $Z = (\sum Y_n)_X$. Assume that there are common $\lambda\geq 1$ and
  $K:(0,+\infty)\to \mathbb{R}$ so that each $Y_n$ has a Schauder basis $(e_i^{(n)})_i$ whose
  constant is at most $\lambda$ that has the $K(\delta)$-diagonal factorization property. Then the
  sequence $((e_i^{(n)})_i)_n$ is a Schauder basis (using the linear order defined in
  Remark~\ref{appropriate enumeration}) whose basis constant is at most $\lambda$ and it has the
  $K(\delta)$-diagonal factorization property.
\end{lem}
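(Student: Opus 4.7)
The plan is to verify the two claims in sequence: first that the interleaved enumeration yields a Schauder basis of $Z$ with basis constant at most $\lambda$, and then that this basis inherits the $K(\delta)$-diagonal factorization property. The key structural observation is that any element $z \in Z$ is a sequence $(y_n)_n$ with $y_n \in Y_n$, and the enumeration described in Remark~\ref{appropriate enumeration} preserves the internal order of each $(e_i^{(n)})_i$. Consequently, a partial sum projection $P_N$ associated to $(\tilde e_i)$ acts on each coordinate $y_n$ as a partial sum projection for the $Y_n$-basis, producing some $y_n' \in Y_n$ with $\|y_n'\|_{Y_n} \leq \lambda \|y_n\|_{Y_n}$. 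Combining this with the $1$-unconditionality of $(e_n)_n$ in $X$ gives
\begin{equation*}
  \|P_N(z)\|_Z = \Big\|\sum_n \|y_n'\|_{Y_n} e_n\Big\|_X \leq \lambda \Big\|\sum_n \|y_n\|_{Y_n} e_n\Big\|_X = \lambda \|z\|_Z,
\end{equation*}
so $(\tilde e_i)$ has basis constant at most $\lambda$.

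For the diagonal factorization property, I fix $\delta > 0$, $\varepsilon > 0$, and a bounded diagonal operator $T:Z\to Z$ (with respect to $(\tilde e_i)_i$) with $\inf_i |\tilde e_i^*(T\tilde e_i)| \geq \delta$. Since $T$ is diagonal on the combined basis, it leaves each coordinate subspace $Y_n$ invariant, so $T_n := T|_{Y_n}: Y_n \to Y_n$ is a bounded diagonal operator on $(e_i^{(n)})_i$ with diagonal entries of absolute value at least $\delta$. By hypothesis, for each $n$ there exist bounded operators $R_n, S_n : Y_n \to Y_n$ with $S_n T_n R_n = I_{Y_n}$ and $\|R_n\| \cdot \|S_n\| \leq K(\delta) + \varepsilon$. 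A standard scaling (replace $R_n$ by $c_n R_n$ and $S_n$ by $c_n^{-1} S_n$ for suitable $c_n > 0$) lets me assume $\|R_n\| = \|S_n\| \leq \sqrt{K(\delta) + \varepsilon}$ uniformly in $n$.

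Then I assemble these coordinate-wise via Remark~\ref{operator direct sum}: define $R, S : Z \to Z$ by $R(y_n)_n = (R_n y_n)_n$ and $S(y_n)_n = (S_n y_n)_n$. Since $(e_n)$ is $1$-unconditional in $X$, the cited remark ensures $\|R\|, \|S\| \leq \sqrt{K(\delta) + \varepsilon}$. A coordinate-wise check yields $STR(y_n)_n = (S_n T_n R_n y_n)_n = (y_n)_n$, so $STR = I_Z$ with $\|R\| \cdot \|S\| \leq K(\delta) + \varepsilon$. As $\varepsilon > 0$ was arbitrary, the identity on $Z$ almost $K(\delta)$-factors through $T$, proving the diagonal factorization property.

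I do not anticipate a substantial obstacle: the argument is a diagonalization combined with Remark~\ref{operator direct sum}. The only subtleties are bookkeeping — confirming that the chosen enumeration genuinely makes $P_N$ act coordinate-wise as a partial sum in each $Y_n$, and carrying out the rescaling $\|R_n\| = \|S_n\|$ uniformly (which is only a matter of splitting the product $\|R_n\|\|S_n\|$ symmetrically between the two factors).
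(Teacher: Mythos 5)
Your proposal is correct and follows essentially the same route as the paper: restrict the diagonal operator to each coordinate block $Y_n$, factor the identity through each restriction with constant $K(\delta)+\varepsilon$, rescale so that $\|R_n\|=\|S_n\|\le\sqrt{K(\delta)+\varepsilon}$, and assemble the factorizations coordinate-wise via the $1$-unconditionality of $(e_n)$ (Remark~\ref{operator direct sum}). The paper simply defers the basis-constant claim to Remark~\ref{appropriate enumeration}, whereas you spell out the (correct) observation that the interleaved partial-sum projections act as partial sums in each block.
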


\begin{proof}
  Let $D:X\to X$ be a diagonal operator with respect to $((e_i^{(n)})_i)_n$ so that
  $\inf_{i,n}|e_i^{(n)*}D(e_i^{(n)})|>\delta$. If follows that for each $n\in\N$ the map $D$
  restricted on $Y_n$ is a diagonal operator $D_n$ so that
  $\inf_{i,n}\big|e_i^{(n)*}\big(D_n(e_i^{(n)})\big)\big|>\delta$. For $\kappa>0$, by assumption,
  there exist $B_n,A_n:Y_n\to Y_n$ with $\|A_n\|\|B_n\| \leq K(\delta)+\kappa$ and $B_nD_nA_n$ is
  the identity map on $Y_n$. By scaling, we may assume that
  $\max\{\|A_n\|,\|B_n\|\} \leq \sqrt{K(\delta)+\kappa}$ and hence the maps $A,B:Z\to Z$, with
  $A(z) = \sum_nA_nP_n(z)$ and $B(z) = \sum_nB_nP_n(z)$ are well defined with
  $\|A\|\|B\| \leq K(\delta)+\kappa$. It is easily verified that $I = BDA$.
\end{proof}

\begin{lem}
  \label{restricting gives a good enough estimate}
  Let $X$ be a Banach space with a 1-unconditional basis $(e_n)_n$, $(Y_n)_n$ be a sequence of
  Banach spaces, and $Z = (\sum Y_n)_X$. Fix $n\in\N$ and let $A$, $B$ be finite subsets of $Z$ and
  $Z^*$ respectively. Define
  \begin{align*}
    A_n &= \{P_n(x): x\in A\},\;
          B_n = \{P^*_n(x^*): x^*\in B\},\\
    G &= A^\perp,\;
        H = \bigcap_{x^*\in B}\ker(x^*),\;
        G_n = A_n^\perp,\;
        W_n = \bigcap_{x^*\in B_n}\ker(x^*).
  \end{align*}
  Then, for every $x\in Y_n$ and $x^*\in Y_n^*$ we have
  $\mathrm{dist}(x,H) \leq \mathrm{dist}(x,W_n)$ and
  $\mathrm{dist}(x^*,G) \leq \mathrm{dist}(x^*,G_n)$.
\end{lem}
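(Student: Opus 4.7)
The plan is to leverage the $1$-unconditionality of $(e_n)$, which forces $P_n\colon Z\to Y_n$ to be a norm-one projection. This makes both the canonical inclusion $Y_n\hookrightarrow Z$ and the adjoint $P_n^*\colon Y_n^*\to Z^*$ isometric embeddings. Once these identifications are in place, each of the two inequalities reduces to a straightforward containment.

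For the first inequality, I would fix $y\in Y_n$ and any $w\in W_n$, and check that $w$, viewed inside $Z$, lies in $H$: for every $x^*\in B$ we have $x^*(w)=x^*(P_n(w))=\bigl(P_n^*(x^*)\bigr)(w)=0$, since $P_n(w)=w$, $P_n^*(x^*)\in B_n$, and $w\in W_n$. Combining $W_n\subseteq H$ with the isometry $\|y-w\|_Z=\|y-w\|_{Y_n}$ and taking infima yields $\mathrm{dist}(y,H)\le\mathrm{dist}(y,W_n)$.

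For the second inequality, I would identify $x^*\in Y_n^*$ with $P_n^*(x^*)\in Z^*$ via the isometric embedding. Given any $g^*\in G_n$ and any $a\in A$, $\bigl(P_n^*(g^*)\bigr)(a)=g^*(P_n(a))=0$, because $P_n(a)\in A_n$ and $g^*\in G_n = A_n^\perp$. Hence $P_n^*(G_n)\subseteq G$. The identity $\|P_n^*(x^*)-P_n^*(g^*)\|_{Z^*}=\|x^*-g^*\|_{Y_n^*}$ and taking infima over $g^*\in G_n$ then finish the argument.

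The proof is essentially bookkeeping; the only care needed is to maintain the two parallel vertical identifications $Y_n\hookrightarrow Z$ and $Y_n^*\hookrightarrow Z^*$, and to invoke $1$-unconditionality at precisely the point where it is needed to ensure these maps are isometries. There is no substantive technical obstacle; a weaker unconditionality constant would simply introduce that constant as a multiplicative factor on the right-hand sides.
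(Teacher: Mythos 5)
Your treatment of the second inequality is essentially the paper's: $P_n^*(G_n)\subseteq G$ together with $\|P_n^*\|\le 1$ finishes it. The first inequality, however, contains a step that fails as written. You assert that every $w\in W_n$ satisfies $P_n(w)=w$, and deduce $W_n\subseteq H$. But $B_n$ is a subset of $Z^*$, so $W_n=\bigcap_{x^*\in B}\ker(P_n^* x^*)$ is a subspace of $Z$, not of $Y_n$; in particular $\ker(P_n)\subseteq W_n$, and elements of $\ker(P_n)$ need not be annihilated by the functionals in $B$. Concretely, if $B=\{x^*\}$ with $x^*=x^*\circ P_m$ for some $m\neq n$, then $P_n^*x^*=0$, so $W_n=Z$ while $H=\ker x^*\subsetneq Z$; thus $W_n\not\subseteq H$. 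The inequality itself is still true, and the repair is exactly the paper's argument: for $w\in W_n$ one has $P_n(w)\in H$, since $x^*(P_n w)=(P_n^*x^*)(w)=0$ for every $x^*\in B$; then, because $x\in Y_n$ and $\|P_n\|=1$,
\[
  \|x-w\|\ \ge\ \|P_n(x-w)\|\ =\ \|x-P_n(w)\|\ \ge\ \mathrm{dist}(x,H),
\]
and taking the infimum over $w\in W_n$ gives $\mathrm{dist}(x,H)\le\mathrm{dist}(x,W_n)$. In short, you must push $w$ into $Y_n$ with $P_n$ rather than assume it already lies there; the $1$-unconditionality enters through $\|P_n\|=1$ at exactly this point (and, as you note, a constant $\lambda$ would only degrade the bound by $\lambda$). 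A smaller remark on the second part: your displayed identity $\|P_n^*(x^*)-P_n^*(g^*)\|_{Z^*}=\|x^*-g^*\|_{Y_n^*}$ does not quite parse, since $g^*\in G_n$ lives in $Z^*$ and $P_n^*$ is not isometric on all of $Z^*$; what you need (and what holds) is only the inequality $\|P_n^*(x^*-g^*)\|_{Z^*}\le\|x^*-g^*\|_{Z^*}$.
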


\begin{proof}
  For every $y\in W_n$ it follows that $P_n(y)\in H$. Hence
  $\|x - y\| \geq \|P_n(x - y)\| = \|x - P_n(y)\|\geq \mathrm{dist}(x,H)$ and so
  $\mathrm{dist}(x,W_n)\geq\mathrm{dist}(x,H)$. Similarly, for $f\in G_n$ we have $P_n^*f\in G$ and
  we conclude in the same manner that $\mathrm{dist}(x^*,G) \leq \mathrm{dist}(x^*,G_n)$.
\end{proof}

\begin{prop}
  \label{reproducible goes through}
  Let $X$ be a Banach space with a 1-unconditional basis $(e_n)_n$, $(Y_n)_n$ be a sequence of
  Banach spaces, and $Z = (\sum Y_n)_X$. Assume that there are common $\lambda\geq 1$ and $C\geq 1$
  so that each $Y_n$ has a $C$-strategically reproducible Schauder basis $(e_i^{(n)})_i$ whose basis
  constant is at most $\lambda$. Then the sequence $((e_i^{(n)})_i)_n$ enumerated as $(\tilde e_n)$
  according to Remark \ref{appropriate enumeration} is a $C$-strategically reproducible Schauder
  basis whose basis constant is at most $\lambda$.
\end{prop}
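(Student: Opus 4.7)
The plan is for Player (II) in the $Z$-game to simulate in parallel the games $\mathrm{Rep}_{(Y_n,(e^{(n)}_i))}(C,\eta)$ for all $n\in\N$. Enumerate $(\tilde e_k)$ according to Remark~\ref{appropriate enumeration}; let $m(k)$ and $j(k)$ be such that $\tilde e_k = e^{(m(k))}_{j(k)}$, so that for each fixed $n$ the restriction of $k\mapsto j(k)$ to $m^{-1}(n)$ is the order-preserving bijection onto $\N$. Given the partition $\N = N_1\cup N_2$ chosen by Player (I) at the start of the $Z$-game, define for each $n$ the induced partition $J^n_s := \{j(k) : k\in N_s,\ m(k)=n\}$, $s=1,2$, and fix for every $n$ a winning strategy for Player (II) in $\mathrm{Rep}_{(Y_n,(e^{(n)}_i))}(C,\eta)$ beginning from the partition $(J^n_1, J^n_2)$.

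On the $k$-th turn of the $Z$-game, put $n := m(k)$ and treat this as the next turn of the $Y_n$-sub-game. Player (I)'s choice $\eta_k$, $W_k\in\cof(Z)$, $G_k\in\cof_{w^*}(Z^*)$ is translated to sub-game data by restriction: writing $W_k = \{g_1,\ldots,g_N\}_\perp$ and $G_k = \{f_1,\ldots,f_M\}^\perp$, set $W^{(n)}_k := \{P_n^*g_1,\ldots,P_n^*g_N\}_\perp\in\cof(Y_n)$ and $G^{(n)}_k := \{P_nf_1,\ldots,P_nf_M\}^\perp\in\cof_{w^*}(Y_n^*)$. The sub-strategy produces $i^{(n)}_k\in\{1,2\}$, a finite $E^{(n)}_k\subset J^n_{i^{(n)}_k}$, and non-negative scalars $(\lambda^{(k),n}_i)_{i\in E^{(n)}_k}$, $(\mu^{(k),n}_i)_{i\in E^{(n)}_k}$ satisfying the sum condition. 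Player (II) then plays $i_k := i^{(n)}_k$, $E_k := \{k' : m(k')=n,\ j(k')\in E^{(n)}_k\}\subset N_{i_k}$, $\lambda^{(k)}_{k'} := \lambda^{(k),n}_{j(k')}$, $\mu^{(k)}_{k'} := \mu^{(k),n}_{j(k')}$, and any signs $(\varepsilon^{(k)}_{k'})_{k'\in E_k}$ later chosen by Player (I) are passed back unchanged to the sub-game. By construction the resulting $x_k = \sum_{j\in E^{(n)}_k}\varepsilon^{(k),n}_j\lambda^{(k),n}_j e^{(n)}_j$ lies in $Y_n\hookrightarrow Z$ and coincides with the sub-game vector; similarly $x^*_k\in Y^*_n\hookrightarrow Z^*$.

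The distance conditions $\dist_Z(x_k,W_k) < \eta_k$ and $\dist_{Z^*}(x^*_k,G_k) < \eta_k$ follow immediately from the sub-game distance bounds via Lemma~\ref{restricting gives a good enough estimate}. For the impartial $(C+\eta)$-equivalence of $(x_k)$ with $(\tilde e_k)$ in $Z$, observe that both sequences split along the summands in the same way:
\[ P_n\Big(\sum_k a_k x_k\Big) = \sum_{k:\,m(k)=n} a_k x_k \in Y_n, \qquad P_n\Big(\sum_k a_k \tilde e_k\Big) = \sum_{k:\,m(k)=n} a_k\tilde e_k \in Y_n. \]
The sub-game equivalence in $Y_n$ bounds the $Y_n$-norms of these two projections within a factor $\sqrt{C+\eta}$ of one another in each direction; since $(e_n)$ is $1$-unconditional in $X$, this coordinatewise inequality of non-negative sequences propagates through the $X$-norm to give the same $\sqrt{C+\eta}$-bound on the $Z$-norms. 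A parallel argument, identifying each $x^*_k$ and $\tilde e^*_k$ with elements of $Y^*_{m(k)}$ acting on $Z$ through $P_{m(k)}$ and using the lattice property of the dual of $X$, yields the impartial $(C+\eta)$-equivalence of $(x^*_k)$ with $(\tilde e^*_k)$ in $Z^*$. The basis-constant bound is furnished by Remark~\ref{appropriate enumeration}.

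The principal obstacle is bookkeeping: one must ensure that the vector $x_k$ returned by the $Y_n$-sub-game is equivalent in $Z$ to $\tilde e_k$ specifically, and not merely to some other basis element. This is precisely what the order-preserving enumeration in Remark~\ref{appropriate enumeration} secures—the $k$-th turn of the $Z$-game lines up with the $j(k)$-th turn of the $Y_{m(k)}$-sub-game, whose winning output is an impartially equivalent replacement for $e^{(m(k))}_{j(k)} = \tilde e_k$.
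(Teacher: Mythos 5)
Your proposal is correct and follows essentially the same route as the paper: simulate the sub-games $\mathrm{Rep}_{(Y_n,(e_i^{(n)}))}(C,\eta)$ in parallel, pull back $W_k$ and $G_k$ via $P_n$ and $P_n^*$ (the paper's Lemma on restricted distances does exactly your distance step), and stitch the outputs together. The only difference is that you spell out the $1$-unconditionality/lattice argument for the impartial equivalence of the stitched sequences, which the paper leaves as ``it easily follows.''
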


\begin{proof}
  For each $n\in\N$ let $M_n$ be the infinite subset of $\N$ so that
  $(\tilde e_i)_{i\in M_n} = (e_i^{(n)})_i$. Here, we will now describe the winning strategy of
  player (II) in a game $\mathrm{Rep}_{(X,(\tilde e_i))}(C,\eta)$. Let player (I) pick a partition
  $\N = N_1\cup N_2$. Note that for each $n\in\N$ $M_n = M_n^1\cup M_n^2$, where
  $M_n^1 = N_1\cap M_n$, $M_n^2 = N_2\cap M_n$. The $m$'th round is played out as follows. Player
  (I) selects $\eta_m>0$ as well as $W_n\in \mathrm{cof}(Z)$, $G_n\in\mathrm{cof}_{w^*}(Z^*)$. Then,
  there are finite subsets $A_m$ and $B_m$ of $Z$ and $Z^*$ respectively so that $G_m = A_m^\perp$
  and $W_m = (B_m)_\perp$. If $m\in M_n$, for some $n\in\N$, and $m$ is the $k$'th element of $M_n$
  then set $A_k^{(n)} = \{P_nx: x\in A\}$ and $B_k^{(n)} = \{P_n^*f: f\in B\}$. Then set
  $G_k^{(n)} = (A_k^{(n)})^\perp$, $W_k^{(n)} = (B_k^{(n)})^\perp$. Let player (II) treat this round
  as the $k$'th round of a game $\mathrm{Rep}_{(Y_n,(e^{(n)}_i))}(C,\eta)$ and follow a winning
  strategy. In the end, for each $n\in\N$, player (II) has chosen $(x^{(n)}_k)_k$ in $Y_n$ and
  $(x^{(n)*}_k)_{k}$ in $Y_n^*$ so that
  \begin{itemize}
  \item[(i)] the sequences $(x^{(n)}_k)_k$ and $(e_k^{(n)})_k$ are impartially
    $(C+\eta)$-equivalent,
  \item[(ii)] the sequences $(x^{(n)*}_k)_k$ and $(e_k^{(n)*})_k$ are impartially
    $(C+\eta)$-equivalent,
  \item[(iii)] for all $k\in\N$, if the $k$'th element of $M_n$ is $m$, we have
    $\mathrm{dist}(x_k^{(n)}, W_k^{n}) < \eta_m$,
  \item[(iv)] for all $k\in\N$, if the $k$'th element of $M_n$ is $m$, we have
    $\mathrm{dist}(x_k^{(n)*}, G_k^{n}) < \eta_m$.
  \end{itemize}
  If we relabel $(x^{(n)}_k)_k$ as $(\tilde x_m)_{m\in M_n}$ and stitch them all together to a
  sequence $(\tilde x_m)_{m\in\N}$ then it easily follows that this sequence is impartially
  $(C+\eta)$-equivalent to $(\tilde e_m)_m$. Also by Lemma \ref{restricting gives a good enough
    estimate} we have $\mathrm{dist}(x_m, W_m) < \eta_m$, for all $m\in\N$. Similarly, relabel
  $(x^{(n)*}_k)_k$ as $(\tilde x^*_m)_{m\in M_n}$ and take $(\tilde x^*_m)_m$, which is
  $(C+\eta)$-impartially equivalent to $(\tilde e_m^*)_m$. Also,
  $\mathrm{dist}(x^*_m, G_m) < \eta_m$, for all $m\in\N$. In other words, player two has emerged
  victorious.
\end{proof}

\begin{thm}\label{thm:sums:strat:rep:1}
  \label{good diagonal and reproducible}
  Let $X$ be a Banach space with a 1-unconditional basis $(e_n)_n$, $(Y_n)_n$ be a sequence of
  Banach spaces, and $Z = (\sum Y_n)_X$. Assume that there are common $\lambda\geq 1$, $C\geq 1$,
  and $K:(0,+\infty)\to\mathbb{R}$ so that each $Y_n$ has a Schauder basis $(e_i^{(n)})_i$ that
  satisfies the following:
  \begin{itemize}
  \item[(i)] its basis constant is at most $\lambda$,
  \item[(ii)] it has the $K(\delta)$-diagonal factorization property and
  \item[(iii)] it is $C$-strategically reproducible in $Y_n$.
  \end{itemize}
  Then the sequence $((e_i^{(n)})_i)_n$ enumerated as $(\tilde e_n)$ (using the linear order defined
  in Remark~\ref{appropriate enumeration}) has the $\lambda C^{2}K(\delta)$-factorization property.
\end{thm}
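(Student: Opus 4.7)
The plan is to assemble the theorem as a direct combination of the three preparatory results that immediately precede it in this section. Specifically, I would verify that the hypotheses (i)--(iii) feed precisely into Lemma~\ref{diagonal restricted isomorphism goes through}, Proposition~\ref{reproducible goes through}, and Theorem~\ref{fun and games factor through one's identity}, whose conjunction yields exactly the conclusion.

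First, I would invoke Remark~\ref{appropriate enumeration} to justify that an enumeration $(\tilde e_n)$ of $\bigl((e_i^{(n)})_i\bigr)_n$ compatible with the internal orderings on each $Y_n$ is a Schauder basis of $Z$ with basis constant at most $\lambda$. Next, from hypothesis (ii) together with Lemma~\ref{diagonal restricted isomorphism goes through}, the basis $(\tilde e_n)$ of $Z$ has the $K(\delta)$-diagonal factorization property; the proof there shows how to glue block-wise factorizations $A_n,B_n$ of each diagonal $D_n = D|_{Y_n}$ (rescaled so that $\max\{\|A_n\|,\|B_n\|\}\le\sqrt{K(\delta)+\kappa}$) into a single pair $A,B:Z\to Z$ using the $1$-unconditionality of $(e_n)$, as in Remark~\ref{operator direct sum}.

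Simultaneously, from hypothesis (iii) and Proposition~\ref{reproducible goes through}, the enumerated basis $(\tilde e_n)$ is $C$-strategically reproducible in $Z$. The strategy combining all of player (II)'s strategies in the individual games $\mathrm{Rep}_{(Y_n,(e_i^{(n)}))}(C,\eta)$ was explicitly described there; the key points are that a cofinite-dimensional $W_m\in\cof(Z)$ and $G_m\in\cof_{w^*}(Z^*)$ pass to cofinite-dimensional subspaces of each $Y_n$ and $Y_n^*$ via $P_n$ and $P_n^*$ (used in Lemma~\ref{restricting gives a good enough estimate}), so winning locally in each $Y_n$ suffices to win globally.

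With these two facts in hand, Theorem~\ref{fun and games factor through one's identity} applied to $(Z,(\tilde e_n))$ yields that $(\tilde e_n)$ has the $\lambda C^2 K(\delta)$-factorization property, which is exactly the desired conclusion. The entire argument is therefore essentially a bookkeeping exercise; there is no genuine obstacle beyond verifying that the ordering chosen in Remark~\ref{appropriate enumeration} is simultaneously compatible with the Schauder basis condition, the diagonal factorization transfer, and the strategic reproducibility transfer, all of which have been arranged by the preceding lemmas.
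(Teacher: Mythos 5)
Your proposal is correct and follows exactly the same route as the paper: combine Lemma~\ref{diagonal restricted isomorphism goes through} (diagonal factorization passes to the sum), Proposition~\ref{reproducible goes through} (strategic reproducibility passes to the sum), and then apply Theorem~\ref{fun and games factor through one's identity} to $(Z,(\tilde e_n))$. Indeed, you even cite the final ingredient more accurately than the paper's own proof, which mistakenly refers to Proposition~\ref{reproducible goes through} a second time where Theorem~\ref{fun and games factor through one's identity} is meant.
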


\begin{proof}
  By Lemma \ref{diagonal restricted isomorphism goes through} the basis of $Z$ is $\lambda$-basic
  and it has the $K(\delta)$-diagonal factorization property. By Proposition \ref{reproducible goes
    through} the basis of $Z$ is $C$-strategically reproducible. We finish off the proof by using
  Proposition \ref{reproducible goes through}.
\end{proof}

\begin{remark}
  A consequence of the above theorem is that if one takes $X = \ell_p$, $1\leq p<\infty$ of
  $X = c_0$ and a sequence of spaces $(Y_n)_n$, with each $Y_n$ being some $L^p$ or $\ell_p$,
  $1\leq p<\infty$, or $c_0$ then $Z = (\sum Y_n)_X$ has a 1-strategically reproducible basis. Of
  course, $X$ can be any space of an unconditional basis and this produces an interesting
  example. It was proved in \cite{laustsen:lechner:mueller:2015} that Gowers' space $X$ with an
  unconditional basis from \cite{gowers:1994} does not satisfy the factorization property, however,
  for this $X$ and any sequence $(Y_n)_n$ as before the space $Z = (\sum Y_n)_X$ does satisfy it.
\end{remark}

\section{Final comments and open problems}
\label{sec:final-comments-open}

Capon~\cite{capon:1982:2} showed that the bi-parameter Haar system in $L^p(L^q)$,
$1 < p,q < \infty$, has the factorization property.  With refined techniques, Capon's result was
extended to $H^1(H^1)$ by~\cite{mueller:1994} and then later in~\cite{laustsen:lechner:mueller:2015}
to $H^p(H^1)$ and $H^1(H^p)$, $1 < p < \infty$.  In Section~\ref{sec:strat-repr-haar}, we gave a
different proof of their results in, by writing $H^p(H^q)$ as a complemented sum of two spaces,
solving the problem in each of the components separately, and then using the fact that strategically
reproducibility is inherited by complemented sums.  This begs the following question.

\begin{prob}
  If $X$ and $Y$ are Banach spaces with bases that have the factorization property, does the union
  of those two bases (in the right order) have the factorization property in the complemented sum of
  $X$ and $Y$?
\end{prob}

As we remarked after Theorem \ref{thm:strat-rep:1} the uniform factorization property from
Definition \ref{rip} is formally stronger than the factorization property from Definition
\ref{factorization definitions} (iii).
\begin{prob}
  Is there a Banach space with a basis that satisfies the factorization property and fails the
  uniform factorization property?
\end{prob}

In Corollary~\ref{cor:L^1(L^1)} we showed that the bi-parameter Haar system has the factorization
property.  Nevertheless, we do not know the answer to the following problem.
\begin{prob}
  Is the normalized bi-parameter Haar system of $L^1(L^1)$ strategically reproducible?
\end{prob}

The unit vector basis of Tsirelson space $T$, i.e. the space constructed by Figiel and Johnson
in~\cite{FigielJohnson1974}, which is the dual of Tsirelson's original space, is \emph{not}
strategically reproducible.  This follows from the following two facts.  On the one hand, every
block bases $(x_i)$ is equivalent to a subsequence $(e_{n_i})$ with $n_i\in\supp(x_i)$,
$i\in\mathbb{N}$.  Secondly, if the subsequence is an Ackerman sequence (i.e. is increasing fast
enough), then $[e_{n_i}]$ is not isomorphic to $T$.  Thus, if player (I) chooses an Ackerman
sequence in the game described in Definition~\ref{D:1.2}, he wins.  This leads to the next problem.
\begin{prob}
  Does the unit vector basis in $T$ have the factorization property?
\end{prob}

Among all the bi-parametric Lebesgue and Hardy spaces, $L^p(L^1)$ and $L^1(L^p)$, $1 < p < \infty$,
seem to resist our approaches.
\begin{prob}
  Is the bi-parameter Haar system strategically reproducible or does it at least have the
  factorization property in $L^p(L^1)$ and $L^1(L^p)$, $1 < p < \infty$?

  More generally, if $X$ has a basis which is strategically reproducible, or has the factorization
  property, does the tensor product of that basis with the Haar system in $L^p(X)$,
  $1\leq p < \infty$ have the same property?
\end{prob}
In this context it is worth noting that Capon~\cite{capon:1983} proved $L^p(X)$, $1 \leq p < \infty$
is primary, if $X$ has a symmetric basis.

\begin{bibsection}

  \begin{biblist}

    \bibselect{references}

  \end{biblist}

\end{bibsection}

\end{document}